\documentclass[reqno,11pt]{amsart}
\usepackage{amscd,amssymb,verbatim,array}
\usepackage{hyperref}

\setlength{\textwidth}{6.3in}
 \addtolength{\oddsidemargin}{-1.7cm}
\addtolength{\evensidemargin}{-1.7cm}

\numberwithin{equation}{section} \theoremstyle{plain}

\newcommand{\Complex}{\mathbb C}
\newcommand{\Real}{\mathbb R}
\newcommand{\N}{\mathbb N}
\newcommand{\ddbar}{\overline\partial}
\newcommand{\pr}{\partial}
\newcommand{\ol}{\overline}

\newcommand{\set}[1]{\left\{#1\right\}}
\newcommand{\To}{\rightarrow}

%------------------------------Mathscr-------------------------------------

\newtheorem{theorem}{Theorem}[section]
\newtheorem{lemma}[theorem]{Lemma}
\newtheorem{proposition}[theorem]{Proposition}
\newtheorem{corollary}[theorem]{Corollary}
\newtheorem{definition}[theorem]{Definition}
\newtheorem{ass}[theorem]{Assumption}

\theoremstyle{definition}

\theoremstyle{remark}
\newtheorem{remark}[theorem]{Remark}

\numberwithin{equation}{section}

\newcommand{\abs}[1]{\lvert#1\rvert}

\begin{document}
%-----------------------------------------------------------------------------------------------------------------------------------------------------------------------------------------------------------------

\title[Asymptotics of G-equivariant Szeg\H{o} kernels]
{Asymptotics of G-equivariant Szeg\H{o} kernels}

\author{Rung-Tzung Huang}
\thanks{The first author was supported by Taiwan Ministry of Science and Technology projects 107-2115-M-008-007-MY2 and 109-2115-M-008-007-MY2.}
\address{Department of Mathematics, National Central University, Chung-Li, Taoyuan 32001, Taiwan}

\email{rthuang@math.ncu.edu.tw}

\author{Guokuan Shao}

\address{School of Mathematics (Zhuhai), Sun Yat-sen University, Zhuhai 519082, Guangdong, China}

\email{shaogk@mail.sysu.edu.cn}

\keywords{equivariant Szeg\H{o} kernel, moment map, CR manifold} 
\subjclass[2010]{Primary: 58J52, 58J28; Secondary: 57Q10}

\begin{abstract}
Let $(X, T^{1,0}X)$ be a compact connected orientable CR manifold of dimension $2n+1$ with non-degenerate Levi curvature. Assume that $X$ admits a connected compact Lie group $G$ action. Under certain natural assumptions about the group $G$ action, we define $G$-equivariant Szeg\H{o} kernels and establish the associated Boutet de Monvel-Sj\"ostrand type theorems. When $X$ admits also a transversal CR $S^1$ action, we study the asymptotics of Fourier components of $G$-equivariant Szeg\H{o} kernels with respect to the $S^1$ action.
\end{abstract}

\maketitle

%%%%%%%%%%%%%%%%%%%%%%%%%%%%%%%%%%%%%%%%%%%%%%%%%%%%%

\section{Introduction and statement of the main results}\label{s-gue170124}

Let $(X, T^{1,0}X)$ be a CR manifold of dimension $2n+1$, $n\geq1$.
Let $\Box^{(q)}_b$ be the Kohn Lalpacian acting on $(0,q)$ forms. 
The orthogonal projection $S^{(q)}:L^2_{(0,q)}(X)\To {\rm Ker\,}\Box^{(q)}_b$ onto ${\rm Ker\,}\Box^{(q)}_b$ is called the Szeg\H{o} projection. The Szeg\H{o} kernel is its distribution kernel $S^{(q)}(x,y)$. The study of the Szeg\H{o} projection and kernels is a classical subject in several complex variables and CR geometry.
When $X$ is the boundary of a strictly pseudoconvex domain,
Boutet de Monvel-Sj\"ostrand~\cite{BouSj76} showed that $S^{(0)}(x,y)$
is a complex Fourier integral operator. 

The Boutet de Monvel-Sj\"ostrand theorem had a profound impact
in many research areas, especially through  \cite{BG81}: several complex variables, 
symplectic and contact geometry, geometric quantization, K\"ahler geometry, semiclassical analysis,
quantum chaos, etc. cf.\ \cite{Ca99, Engl:02,Gu89, MM06,MM,MM08a,Ma10,ShZ99,Zelditch98}. Recently, Hsiao-Huang \cite{HH} obtained $G$-invariant Boutet de Monvel-Sj\"ostrand type theorems and Hsiao-Ma-Marinescu~\cite{HMM} established geometric quantization on CR manifolds by using  $G$-invariant Szeg\H{o} kernels asymptotic expansions. 

In this paper, we study $G$-equivariant Szeg\H{o} kernels with respect to all equivalent classes of irreducible unitary representations of $G$. We establish $G$-equivariant Boutet de Monvel-Sj\"ostrand type theorems. When $X$ admits also a transversal CR $S^1$ action,  we derive the asymptotic expansion of Fourier components of $G$-equivariant Szeg\H{o} kernels with respect to the $S^1$ action.

We now formulate the main results. We refer to Section~\ref{s:prelim} for some notations and terminology used here. Let $(X, T^{1,0}X)$ be a compact connected orientable CR manifold of dimension $2n+1$, $n\geq1$, where $T^{1,0}X$ denotes the CR structure of $X$. Fix a global non-vanishing real $1$-form $\omega_0\in C^\infty(X,T^*X)$ such that $\langle\,\omega_0\,,\,u\,\rangle=0$, for every $u\in T^{1,0}X\oplus T^{0,1}X$. The Levi form of $X$ at $x\in X$ is the Hermitian quadratic form on $T^{1,0}_xX$ given by $$\mathcal{L}_x(U,\ol V)=-\frac{1}{2i}\langle\,d\omega_0(x)\,,\,U\wedge\ol V\,\rangle, \ \text{for all} \ U, V\in T^{1,0}_xX.$$ 
In this paper, we assume that 

\begin{ass}\label{a-gue170123}
	The Levi form is non-degenerate of constant signature $(n_-,n_+)$ on $X$. That is, the Levi form  has exactly $n_-$ negative and $n_+$ positive eigenvalues at each point of $X$, where $n_-+n_+=n$. 
\end{ass}

Let $HX=\set{{\rm Re\,}u;\, u\in T^{1,0}X}$ and let $J:HX\To HX$ be the complex structure map given by $J(u+\ol u)=iu-i\ol u$, for every $u\in T^{1,0}X$. 
In this paper, we assume that $X$ admits a $d$-dimensional connected compact Lie group $G$ action. Let $\mathfrak{g}$ denote the Lie algebra of $G$. For any $\xi \in \mathfrak{g}$, we write $\xi_X$ to denote the vector field on $X$ induced by $\xi$. That is, $(\xi_X u)(x)=\frac{\partial}{\partial t}\left(u(\exp(t\xi)\circ x)\right)|_{t=0}$, for any $u\in C^\infty(X)$. Let $\underline{\mathfrak{g}}={\rm Span\,}(\xi_X;\, \xi\in\mathfrak{g})$.
We assume throughout that

\begin{ass}\label{a-gue170123I}
	The Lie group $G$ action is CR and preserves $\omega_0$ and $J$.
%	That is, $g^\ast\omega_0=\omega_0$ on $X$ and $g_\ast J=Jg_\ast$ on $HX$, for every $g\in G$, where $g^*$ and $g_*$ denote  the pull-back map and push-forward map of $G$, respectively. The action is also CR. That is, for every $\xi_X\in\underline{\mathfrak{g}}$, 
%	\begin{equation*}
%		[\xi_X, C^{\infty}(X,T^{1,0}X)]\subset C^{\infty}(X,T^{1,0}X).
%	\end{equation*}
\end{ass}

We recall that the Lie group $G$ action preserves $\omega_0$ and $J$  means that 
$g^\ast\omega_0=\omega_0$ on $X$ and $g_\ast J=Jg_\ast$ on $HX$, for every $g\in G$, where $g^*$ and $g_*$ denote  the pull-back map and push-forward map of $G$, respectively. The $G$ action is CR means that for every $\xi_X\in\underline{\mathfrak{g}}$, 
	\begin{equation*}
		[\xi_X, C^{\infty}(X,T^{1,0}X)]\subset C^{\infty}(X,T^{1,0}X).
	\end{equation*}

\begin{definition}\label{d-gue170124}
	The moment map associated to the form $\omega_0$ is the map $\mu:X \to \mathfrak{g}^*$ such that, for all $x \in X$ and $\xi \in \mathfrak{g}$, we have 
	\begin{equation}\label{E:cmpm}
	\langle \mu(x), \xi \rangle = \omega_0(\xi_X(x)).
	\end{equation}
\end{definition}

We assume also that 

\begin{ass}\label{a-gue170123II}
	$0$ is a regular value of $\mu$ and $G$ acts locally free near $\mu^{-1}(0)$. 
\end{ass}

By Assumption~\ref{a-gue170123II}, $\mu^{-1}(0)$ is a $d$-codimensional orbifold of $X$. %Let $Y:=\mu^{-1}(0)$ and let $HY:=HX\bigcap TY$. 
Note that if $G$ acts freely near $\mu^{-1}(0)$ and the Levi form is positive at $\mu^{-1}(0)$, it is known that %${\rm dim\,}(HY\bigcap JHY)=2n-2d$ at every point of $Y$ and that
 $\mu^{-1}(0)/G$ is a CR manifold with natural CR structure induced by $T^{1,0}X$ of dimension $2n-2d+1$, (see \cite{HH}). 

%In this paper, we assume that $\underline{\mathfrak{g}}_x\neq\set{0}$ at every point $x$ of $X$. For $x\in\mu^{-1}(0)$, $\underline{\mathfrak{g}}_x\subset H_xX$. Fix a $G$-invariant smooth Hermitian metric $\langle \cdot \mid \cdot \rangle$ on $\mathbb{C}TX$ so that $T^{1,0}X$ is orthogonal to $T^{0,1}X$, $\underline{\mathfrak{g}}$ is orthogonal to $HY\bigcap JHY$ at every point of $Y$, $\langle u \mid v \rangle$ is real if $u, v$ are real tangent vectors, $\langle\,T\,|\,T\,\rangle=1$ and $T$ is orthogonal to $T^{1,0}X\oplus T^{0,1}X$, where $T$ is given by \eqref{e-gue170111ry}. The Hermitian metric $\langle \cdot | \cdot \rangle$ on $\mathbb{C}TX$ induces, by duality, a Hermitian metric on $\mathbb{C}T^*X$ and also on the bundles of $(0,q)$ forms $T^{*0,q}X, q=0, 1, \cdots, n$. We shall also denote all these induced metrics by $\langle \cdot | \cdot \rangle$. Since the Hermitian metric $\langle\,\cdot\,|\,\cdot\,\rangle$ on $\Complex TX$ is $G$-invariant, the $L^2$ inner product $(\,\cdot\,|\,\cdot\,)$ on $\Omega^{0,q}(X)$ induced by $\langle\,\cdot\,|\,\cdot\,\rangle$ is $G$-invariant. Let $\Box^{(q)}_b : {\rm Dom\,}\Box^{(q)}_b\To L^2_{(0,q)}(X)$ be the Gaffney extension of Kohn Laplacian (see \eqref{e-suIX}). 

Let $R=\{R_1,R_2,...\}$ be the collection of all irreducible unitary representations of $G$, including only one representation from each equivalent class (see Section \ref{subs1}).
Write
\begin{equation}\label{e-9291}
\begin{split}
R_k:G&\to GL(\mathbb{C}^{d_k}), \ \ d_k<\infty,\\
g&\to (R_{k,j,l}(g))_{j,l=1}^{d_k},
\end{split}
\end{equation}
where $d_k$ is the dimension of the representation $R_k$.
Denote by $\chi_k(g):=\text{Tr}R_k(g)$ the trace of the matrix $R_k(g)$ (the character of $R_k$).
Let $u\in\Omega^{0,q}(X)$. For every $k = 1, 2, \cdots,$ we define 
\begin{equation}\label{e-9292}
u_k(x)=d_k\int_G (g^\star u)(x)\overline{\chi_k(g)}d\mu(g),
\end{equation}
where $d\mu(g)$ is the probability Haar measure on $G$.
We will show that $u=\sum_{k=0}^{\infty}u_k$.
By Assumption \ref{a-gue170123I}, 
$u_k\in \Omega^{0,q}(X)$ if $u\in \Omega^{0,q}(X)$.
Set
\begin{equation}\label{e-9293}
\Omega^{0,q}(X)_k:=\{u(x)\in \Omega^{0,q}(X)| u(x) = u_k(x)\}.
\end{equation}
Denote by $L^2_{(0,q)}(X)_k$ the completion of $\Omega^{0,q}(X)_k$ with respect to the inner product $(\,\cdot\,|\,\cdot\,)$. %Put ${\rm Ker\,}\Box^{(q)}_{b,k}:={\rm Ker\,}\Box^{(q)}_b\bigcap L^2_{(0,q),k}(X)$.
\begin{definition}\label{d-08}
The $G$-equivariant Szeg\H{o} projection is the orthogonal projection 
\[
S^{(q)}_k
:L^2_{(0,q)}(X)\To {\rm Ker\,}\Box^{(q)}_b\bigcap L^2_{(0,q)}(X)_k
\]
with respect to $(\,\cdot\,|\,\cdot\,)$. 
Denote by $S^{(q)}_k(x,y)\in D'(X\times X,T^{*0,q}X\boxtimes(T^{*0,q}X)^*)$ the distribution kernel of $S^{(q)}_k$.
\end{definition}
 The first main result of this work is the following 
\begin{theorem}\label{t-gue170124}
With the assumptions and notations above, suppose that $\Box^{(q)}_b : {\rm Dom\,}\Box^{(q)}_b\To L^2_{(0,q)}(X)$ has closed range. If $q\notin\set{n_-, n_+}$, then $S^{(q)}_k\equiv 0$ on $X$. 
	
Suppose  $q\in\set{n_-, n_+}$. Let $D$ be an open neighborhood of $X$ with $D\bigcap\mu^{-1}(0)=\emptyset$. Then, $S^{(q)}_k\equiv0$ on $D$. 

Let $p\in\mu^{-1}(0)$ and let $U$ be an open neighborhood of  $p$ and let $x=(x_1,\ldots,x_{2n+1})$ be local coordinates defined in $U$. 
Let $N_p=\{g\in G: g\circ p=p\}=\{g_1=e_0,g_2...,g_r\}$.
Then, there exist continuous operators $\hat S_{k,-}, \hat S_{k,+}:\Omega^{0,q}_c(U)\To\Omega^{0,q}(U)$
such that 
\begin{equation}\label{e-gue170108wrm}
S^{(q)}_k\equiv \hat S_{k,-}+\hat S_{k,+}\ \ \mbox{on $U$},
\end{equation}
and $\hat S_{k,-}(x,y)$, $\hat S_{k,+}(x,y)$ satisfy
\begin{equation}\label{e-gue170108wrIIm}
\begin{split}
&\hat S_{k,-}(x, y)\equiv\sum_{\alpha=1}^r\int^{\infty}_{0}e^{i\Phi_{k,-}(g_\alpha\circ x, y)t}a_{k,\alpha,-}(x, y, t)dt\ \ \mbox{on $U$},\\
&\hat S_{k,+}(x, y)\equiv\sum_{\alpha=1}^r\int^{\infty}_{0}e^{i\Phi_{k,+}(g_\alpha\circ x, y)t}a_{k,\alpha,+}(x, y, t)dt\ \ \mbox{on $U$},
\end{split}
\end{equation}
with 
\begin{equation}  \label{e-gue170108wrIIIm}\begin{split}
&a_{k,\alpha,+}(x, y, t), a_{k,\alpha,-}(x, y, t)\in S^{n-\frac{d}{2}}_{{\rm cl\,}}(U\times U\times\mathbb{R}_+,T^{*0,q}X\boxtimes(T^{*0,q}X)^*), \\
&a_{k,\alpha,-}(x,y,t)=0\ \ \mbox{if $q\neq n_-$},\ \ a_{k,\alpha,+}(x,y,t)=0\ \ \mbox{if $q\neq n_+$},\\
%&a_{k,\alpha,-}(x, y, t)\sim\sum^\infty_{j=0}a^j_{k,\alpha,-}(x, y)t^{n-\frac{d}{2}-j}\quad\text{ in }S^{n-\frac{d}{2}}_{1, 0}(U\times U\times\mathbb{R}_+,T^{*0,q}X\boxtimes(T^{*0,q}X)^*),\\
%&a_{k,\alpha,+}(x, y, t)\sim\sum^\infty_{j=0}a^j_{k,\alpha,+}(x, y)t^{n-\frac{d}{2}-j}\quad\text{ in }S^{n-\frac{d}{2}}_{1, 0}(U\times U\times\mathbb{R}_+,T^{*0,q}X\boxtimes(T^{*0,q}X)^*),\\
%&a^j_{k,\alpha,+}(x,y), a^j_{k,\alpha,-}(x, y)\in C^\infty(U\times U,T^{*0,q}X\boxtimes(T^{*0,q}X)^*),\ \ j=0,1,2,3,\ldots,\\
&a^0_{k,\alpha,-}(x,x)\neq0,\ \ \forall x\in U, \ \ \mbox{if $q=n_-$},\ \ a^0_{k,\alpha,+}(x,x)\neq0,\ \ \forall x\in U, \ \ \mbox{if $q=n_+$},
\end{split}\end{equation}
where $a^0_{k,\alpha, -}(x,x)$ and $a^0_{k,\alpha, +}(x,x)$, $x\in\mu^{-1}(0)\bigcap U$, are the leading terms of the asymptotic expansion of $a_{k, \alpha, -}(x, x)$ and $a_{k, \alpha, +}(x, x)$ in $S^{n-\frac{d}{2}}_{1, 0}(U\times U\times\mathbb{R}_+,T^{*0,q}X\boxtimes(T^{*0,q}X)^*)$, respectively, %are given by \eqref{e-gue170128} below,
and $\Phi_{k,-}(x,y)\in C^\infty(U\times U)$,
\begin{equation}\label{e-gue170125}
\begin{split}
& {\rm Im\,}\Phi_{k,-}(x,y)\geq0,\\
&d_x\Phi_{k,-}(x,x)=-d_y\Phi_{k,-}(x,x)=-\omega_0(x),\ \ \forall x\in U\bigcap\mu^{-1}(0),\\
\end{split}
\end{equation}
and $-\bar\Phi_{k,+}(x,y)$ satisfies \eqref{e-gue170125}.
\end{theorem}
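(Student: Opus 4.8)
The plan is to write the $G$-equivariant projection as a composition of the ordinary Szeg\H{o} projection with the projector onto the $k$-th isotypic component, feed in the Boutet de Monvel--Sj\"ostrand description of the ordinary kernel, and then carry out a complex stationary phase computation over the group $G$, with the finite stabilizer $N_p$ accounting for the sum over $\alpha$.

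\emph{Step 1 (reduction to $S^{(q)}$).} Because the $G$-action is CR and preserves $\omega_0$ and $J$ (Assumption~\ref{a-gue170123I}), it commutes with $\Box^{(q)}_b$, so the ordinary Szeg\H{o} projection $S^{(q)}$ is $G$-invariant. Writing $P_k$ for the operator $u\mapsto u_k$ of \eqref{e-9292}, Schur orthogonality identifies $P_k$ with the orthogonal projection of $L^2_{(0,q)}(X)$ onto $L^2_{(0,q)}(X)_k$; invariance then gives $S^{(q)}_k=P_kS^{(q)}=S^{(q)}P_k$, hence at the level of kernels
\[
S^{(q)}_k(x,y)=d_k\int_G\overline{\chi_k(g)}\,\bigl(g^\star S^{(q)}(\cdot,y)\bigr)(x)\,d\mu(g).
\]
Under the closed range hypothesis I would invoke the Boutet de Monvel--Sj\"ostrand type description of $S^{(q)}$: $S^{(q)}\equiv0$ when $q\notin\set{n_-,n_+}$, and when $q\in\set{n_-,n_+}$ one has microlocally $S^{(q)}\equiv\int_0^\infty e^{i\varphi_-(x,y)t}s_-(x,y,t)\,dt+\int_0^\infty e^{i\varphi_+(x,y)t}s_+(x,y,t)\,dt$ with $s_\pm\in S^n_{\mathrm{cl}}$ and phases $\varphi_\pm$ of the type in \eqref{e-gue170125}. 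The vanishing case immediately yields $S^{(q)}_k\equiv0$ for $q\notin\set{n_-,n_+}$.

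\emph{Step 2 (localization via the moment map).} For $q\in\set{n_-,n_+}$, substituting the oscillatory form into the displayed formula gives, near $p\in\mu^{-1}(0)$, an integral with large parameter $t$, phase $t\,\varphi_-(g\circ x,y)$ (and its $+$ analogue) and amplitude carrying $\overline{\chi_k(g)}$ together with the pullback on forms. The key geometric input is that, on the diagonal, $d_x\varphi_-(x,x)=-\omega_0(x)$ gives $\tfrac{d}{ds}\big|_{0}\varphi_-(\exp(s\xi)\circ x,y)\big|_{y=x}=-\langle\mu(x),\xi\rangle$ by \eqref{E:cmpm}; thus $g\mapsto\varphi_-(g\circ x,y)$ is stationary in $g$ precisely over $\mu^{-1}(0)$. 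Away from $\mu^{-1}(0)$ there is no stationary point, so integration by parts in $g$ and $t$ makes the integral smoothing, giving $S^{(q)}_k\equiv0$ on any $D$ with $D\bigcap\mu^{-1}(0)=\emptyset$. Using the positivity $\mathrm{Im}\,\varphi_\pm\geq0$ (so that contributions concentrate where $g\circ x$ meets $y$) together with the local freeness of Assumption~\ref{a-gue170123II}, the stationary set over $x=y=p$ is exactly the finite stabilizer $N_p=\set{g_1,\dots,g_r}$.

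\emph{Step 3 (stationary phase and the leading term).} Near $p$ I would localize on $G$ by a partition of unity subordinate to small neighborhoods of the $g_\alpha$ and apply the method of complex stationary phase of Melin--Sj\"ostrand to each piece. Local freeness makes each $g_\alpha$ an isolated critical point and, with the non-degeneracy of the Levi form, the Hessian of the complex phase in the $d$ group variables is non-degenerate, so the group integration contributes a factor $t^{-d/2}$: this lowers the order from $n$ to $n-\tfrac d2$ and produces amplitudes $a_{k,\alpha,\pm}\in S^{n-d/2}_{\mathrm{cl}}$ and a phase recorded by $\Phi_{k,\pm}(g_\alpha\circ x,y)$, establishing \eqref{e-gue170108wrIIm}--\eqref{e-gue170108wrIIIm}. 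The properties \eqref{e-gue170125} of $\Phi_{k,-}$ are inherited from $\varphi_-$, the critical value sitting at $g_\alpha$ not altering the leading differential along $\mu^{-1}(0)$, and the statement for $-\bar\Phi_{k,+}$ follows from the corresponding property of $\varphi_+$. Finally the leading coefficient $a^0_{k,\alpha,\pm}(x,x)$ is read off from the stationary phase formula as the product of the non-vanishing leading symbol of $S^{(q)}$, the Gaussian Hessian-determinant factor, the pullback on $T^{*0,q}X$ at $g_\alpha$, and the character value, which I would check is nonzero on $\mu^{-1}(0)\bigcap U$.

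I expect the main obstacle to be the complex stationary phase over $G$: pinning the stationary set down to $N_p$ and, above all, verifying the non-degeneracy of the Hessian of the (only almost-analytic) complex phase $\varphi_\pm$ in the group directions, which is exactly what yields the clean order reduction by $d/2$ and the nonvanishing leading term. The almost-analytic Melin--Sj\"ostrand machinery, combined with keeping track of the form-valued (half-density) amplitudes and the pullback factors at the several $g_\alpha$, is where the genuine work lies; the reduction and the non-stationary estimates away from $\mu^{-1}(0)$ are comparatively routine, following the $G$-invariant case of \cite{HH}.
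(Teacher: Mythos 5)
Your proposal is correct and follows essentially the same route as the paper: it reduces $S^{(q)}_k$ to the group-averaged kernel $d_k\int_G S^{(q)}(g\circ x,y)\overline{\chi_k(g)}\,d\mu(g)$ (the paper's Lemma~\ref{l-112}), feeds in the known Boutet de Monvel--Sj\"ostrand description of $S^{(q)}$, proves vanishing away from $\mu^{-1}(0)$ by the moment-map non-stationarity of the phase in the group directions (the paper's Theorem~\ref{t-gue170110} via Lemmas~\ref{l-gue170110}--\ref{l-gue170111}), and near $\mu^{-1}(0)$ localizes on $G$ near the finite stabilizer $N_p$ (off-diagonal smoothing handling the rest) before applying Melin--Sj\"ostrand complex stationary phase in the $d$ group variables to obtain the order drop to $n-\frac{d}{2}$ and the phases $\Phi_{k,\pm}(g_\alpha\circ x,y)$. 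The points you flag as the genuine work (non-degeneracy of the complex Hessian in the group directions and the bookkeeping of pullback and character factors) are exactly the steps the paper delegates to the arguments of \cite{HH}.
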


We refer the readers to the discussion before \eqref{e-gue160507f} and Definition~\ref{d-gue140221a} for the precise meanings of $A\equiv B$ and the symbol space $S^{n-\frac{d}{2}}_{{\rm cl\,}}$, respectively.

%Let $\Phi\in C^\infty(U\times U)$.
%We assume that $\Phi$ satisfies \eqref{e-gue170125}, \eqref{e-gue170125I}, \eqref{e-gue170126}, \eqref{e-gue170126I}. We will show in Theorem~\ref{t-gue140305II} that 
%the functions
%$\Phi$ and $\Phi_-$ are equivalent on $U$ in the sense of Definition~\ref{d-gue140305}
%if and only if there is a function $f\in C^\infty(U\times U)$ with $f(x,x)=1$, for every $x\in\mu^{-1}(0)$, such that $\Phi(x,y)-f(x,y)\Phi_-(x,y)$
%vanishes to infinite order at ${\rm diag\,}\Bigr((\mu^{-1}(0)\bigcap U)\times(\mu^{-1}(0)\bigcap U)\Bigr)$. From this observation, we see that the leading term $a^0_-(x,x)$, $x\in\mu^{-1}(0)$, is well-defined.
 Assume that $G$ acts freely on $\mu^{-1}(0)$ for a moment. To state the formulas for $a^0_{k, -}(x,x)$ and $a^0_{k, +}(x,x)$, we introduce some notations. For a given point $x_0\in X$, let $\{W_j\}_{j=1}^{n}$ be an
orthonormal frame of $(T^{1,0}X,\langle\,\cdot\,|\,\cdot\,\rangle)$ near $x_0$, for which the Levi form
is diagonal at $x_0$. Put
%\begin{equation}\label{levi140530}
\[
\mathcal{L}_{x_0}(W_j,\ol W_\ell)=\mu_j(x_0)\delta_{j\ell}\,,\;\; j,\ell=1,\ldots,n\,.
\]
%\end{equation}
We will denote by
\begin{equation}\label{det140530}
\det\mathcal{L}_{x_0}=\prod_{j=1}^{n}\mu_j(x_0)\,.
\end{equation}
Let $\{T_j\}_{j=1}^{n}$ denote the basis of $T^{*0,1}X$, dual to $\{\ol W_j\}^{n}_{j=1}$. We assume that
$\mu_j(x_0)<0$ if\, $1\leq j\leq n_-$ and $\mu_j(x_0)>0$ if\, $n_-+1\leq j\leq n$. Put
%\begin{equation}\label{n140530}
\[
\renewcommand{\arraystretch}{1.2}
\begin{array}{ll}
&\mathcal{N}(x_0,n_-):=\set{cT_1(x_0)\wedge\ldots\wedge T_{n_-}(x_0);\, c\in\Complex},\\
&\mathcal{N}(x_0,n_+):=\set{cT_{n_-+1}(x_0)\wedge\ldots\wedge T_{n}(x_0);\, c\in\Complex}
\end{array}
\]
%\end{equation}
and let
\begin{equation}\label{tau140530}
\tau_{n_-}=\tau_{x_0,n_-}:T^{*0,q}_{x_0}X\To\mathcal{N}(x_0,n_-)\,,\quad
\tau_{n_+}=\tau_{x_0,n_+}:T^{*0,q}_{x_0}X\To\mathcal{N}(x_0,n_+)\,,
\end{equation}
be the orthogonal projections onto $\mathcal{N}(x_0,n_-)$ and $\mathcal{N}(x_0,n_+)$
with respect to $\langle\,\cdot\,|\,\cdot\,\rangle$, respectively.

Fix $x\in\mu^{-1}(0)$, consider the linear map 
\[
\renewcommand{\arraystretch}{1.2}
\begin{array}{rll}
R_x:\underline{\mathfrak{g}}_x&\To&\underline{\mathfrak{g}}_x,\\
u&\To& R_xu,\ \ \langle\,R_xu\,|\,v\,\rangle=\langle\,d\omega_0(x)\,,\,Ju\wedge v\,\rangle.
\end{array}
\]
Let $\det R_x=\lambda_1(x)\cdots\lambda_d(x)$, where $\lambda_j(x)$, $j=1,2,\ldots,d$, are the eigenvalues of $R_x$. 

Fix $x\in\mu^{-1}(0)$, put $Y_x=\set{g\circ x;\, g\in G}$. $Y_x$ is a $d$-dimensional submanifold of $X$. The $G$-invariant Hermitian metric $\langle\,\cdot\,|\,\cdot\,\rangle$ induces a volume form $dv_{Y_x}$ on $Y_x$. Put 
%\begin{equation}\label{e-gue170108em}
\[
V_{{\rm eff\,}}(x):=\int_{Y_x}dv_{Y_x}.
\]
%\end{equation}
Note that the function $V_{{\rm eff\,}}(x)$ was already appeared in Ma-Zhang~\cite[(0,10)]{MZI} as exactly the role in the expansion, cf.~\cite[(0.14)]{MZI}.

\begin{theorem}\label{t-gue170128}
With the notations used above, if $G$ acts freely on $\mu^{-1}(0)$, then for $a^0_{k, -}(x,y)$ and $a^0_{k, +}(x,y)$ in \eqref{e-gue170108wrIIIm}, we have 
\begin{equation}\label{e-gue170128}
\begin{split}
a^0_{k, -}(x,x)=2^{d-1}\frac{d^2_k}{V_{{\rm eff\,}}(x)}\pi^{-n-1+\frac{d}{2}}\abs{\det R_x}^{-\frac{1}{2}}\abs{\det\mathcal{L}_{x}}\tau_{x,n_{-}},\ \ \forall x\in\mu^{-1}(0)\\
a^0_{k, +}(x,x)=2^{d-1}\frac{d^2_k}{V_{{\rm eff\,}}(x)}\pi^{-n-1+\frac{d}{2}}\abs{\det R_x}^{-\frac{1}{2}}\abs{\det\mathcal{L}_{x}}\tau_{x,n_{+}},\ \ \forall x\in\mu^{-1}(0).
\end{split}
\end{equation}
\end{theorem}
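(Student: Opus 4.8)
The plan is to reduce the computation to the $G$-invariant case treated in Hsiao--Huang \cite{HH} by realizing $S^{(q)}_k$ as a character-twisted group average of the ordinary Szeg\H{o} kernel and then carrying out the resulting group integral by stationary phase. Since the $G$-action is CR and preserves $\omega_0$ and $J$ (Assumption~\ref{a-gue170123I}), $\Box^{(q)}_b$ commutes with the action, so the equivariant projection factors as $S^{(q)}_k=P_k\circ S^{(q)}$, where $P_k u=d_k\int_G\overline{\chi_k(g)}\,(g^\star u)\,d\mu(g)$ is the projection onto the $R_k$-isotypic component defined through \eqref{e-9292}. On the level of kernels this gives $S^{(q)}_k(x,y)\equiv d_k\int_G\overline{\chi_k(g)}\,S^{(q)}(g\circ x,y)\,d\mu(g)$, and I would insert here the Boutet de Monvel--Sj\"ostrand expansion $S^{(q)}(x,y)\equiv\int_0^\infty e^{i\varphi_-(x,y)t}s_-(x,y,t)\,dt$ of the ordinary kernel, whose leading symbol on the diagonal is the known $s^0_-(x,x)=\tfrac12\pi^{-n-1}\abs{\det\mathcal{L}_x}\tau_{x,n_-}$ (and symmetrically for $+$).

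Working on the diagonal near a fixed $x_0\in\mu^{-1}(0)$, the phase governing the $g$-integral is $t\,\varphi_-(g\circ x,x)$. First I would verify that, because $G$ acts freely, $g=e$ is the only critical point: parametrizing $g=\exp\xi$ and using \eqref{e-gue170125} together with $d_x\varphi_-(x,x)=-\omega_0(x)$, the first variation is $-\omega_0(\xi_X(x))=-\langle\mu(x),\xi\rangle=0$ for $x\in\mu^{-1}(0)$, while freeness forbids other stationary group elements. Next I would compute the Hessian of $\varphi_-(\exp\xi\circ x,x)$ in $\xi$ at $\xi=0$ and identify it, up to a universal factor, with the quadratic form $u\mapsto\langle R_xu\,|\,u\rangle=\langle d\omega_0(x),Ju\wedge u\rangle$ on $\underline{\mathfrak{g}}_x$; Assumption~\ref{a-gue170123II} guarantees this form is nondegenerate, so that $\det R_x\neq0$. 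Applying the (complex) stationary phase formula to the $d$-dimensional $g$-integral then lowers the symbol order by $d/2$, matching $S^{n-\frac{d}{2}}_{{\rm cl\,}}$ in \eqref{e-gue170108wrIIIm}, produces the factor $\abs{\det R_x}^{-1/2}$ together with the appropriate power of $\pi$, and evaluates the smooth amplitude $d_k\overline{\chi_k(g)}$ at $g=e$, yielding $d_k\,\overline{\chi_k(e)}=d_k^2$; since $g^\star$ acts as the identity on the form part at $e$, the projection $\tau_{x,n_-}$ is carried through unchanged.

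The last ingredient is the measure normalization that produces $V_{{\rm eff\,}}(x)$. Because the Hermitian metric is $G$-invariant, the orbit map $\phi:g\mapsto g\circ x$ pulls the Riemannian volume $dv_{Y_x}$ back to a $G$-invariant measure on $G$, hence proportional to Haar measure, giving $d\mu(g)=\tfrac{1}{V_{{\rm eff\,}}(x)}\phi^\star dv_{Y_x}$; rewriting the average in terms of $dv_{Y_x}$, so that $\det R_x$ and the Hessian are measured consistently against the metric, extracts exactly the factor $1/V_{{\rm eff\,}}(x)$. Combining the base Szeg\H{o} constant $\tfrac12\pi^{-n-1}\abs{\det\mathcal{L}_x}\tau_{x,n_-}$, the stationary-phase output $2^{d}\pi^{d/2}\abs{\det R_x}^{-1/2}/V_{{\rm eff\,}}(x)$, and the character factor $d_k^2$ gives $2^{d-1}\frac{d_k^2}{V_{{\rm eff\,}}(x)}\pi^{-n-1+\frac d2}\abs{\det R_x}^{-1/2}\abs{\det\mathcal{L}_x}\tau_{x,n_-}$, with the $+$ case identical after replacing $\tau_{x,n_-}$ by $\tau_{x,n_+}$ and working with $-\bar\varphi_+$. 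Equivalently, the whole argument shows $a^0_{k,-}=d_k^2\,a^0_{G,-}$, where $a^0_{G,\pm}$ is the $G$-invariant leading coefficient, so one may simply quote the $G$-invariant formula of \cite{HH}.

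The main obstacle will be the two pieces of exact bookkeeping: identifying the group-direction Hessian of the complex phase $\varphi_-$ with the form $R_x$, which requires the precise normal form of $\varphi_-$ near the diagonal and a careful treatment of its imaginary part to fix the branch of $\abs{\det R_x}^{-1/2}$; and correctly matching the powers of $2$ and $\pi$ arising from the $d$-dimensional complex stationary phase against the base constant $\tfrac12\pi^{-n-1}$, together with the conversion of the probability Haar measure to $dv_{Y_x}$. Once these are settled, the nonvanishing of the leading coefficient asserted in \eqref{e-gue170108wrIIIm} is immediate, since every scalar factor is nonzero and $\tau_{x,n_-},\tau_{x,n_+}\neq0$.
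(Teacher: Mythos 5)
Your proposal is correct and takes essentially the same route as the paper: the paper likewise writes $S^{(q)}_k$ as the character-twisted group average of Lemma~\ref{l-112}, localizes near the identity (the only stabilizer element under freeness), inserts the Boutet de Monvel--Sj\"ostrand parametrix, and applies Melin--Sj\"ostrand complex stationary phase in the group variables, citing the arguments of \cite[Subsection 3.3]{HH} ``with minor modification'' for exactly the bookkeeping you describe (group-direction Hessian identified with $R_x$, conversion of probability Haar measure to the orbit volume yielding $1/V_{\mathrm{eff}}(x)$). The ``minor modification'' is precisely your observation that the extra amplitude factor $d_k\overline{\chi_k(g)}$ evaluates at $g=e_0$ to $d_k^2$, so that $a^0_{k,\pm}=d_k^2\,a^0_{G,\pm}$ with $a^0_{G,\pm}$ the $G$-invariant leading coefficient of \cite{HH}.
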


Assume that $X$ admits an $S^1$ action $e^{i\theta}$: $S^1\times X\rightarrow X$. Let $T\in C^\infty(X, TX)$ be the global real vector field induced by the $S^1$ action given by
$(Tu)(x)=\frac{\partial}{\partial\theta}\left(u(e^{i\theta}\circ x)\right)|_{\theta=0}$, $u\in C^\infty(X)$. Let the $S^1$ action $e^{i\theta}$ be CR and transversal (see Definition~\ref{d-gue160502}). We assume throughout that
\begin{ass}\label{a-gue170128}
	\begin{equation}\label{e-gue170111ryI}
	\mbox{$T$ is transversal to the space $\underline{\mathfrak{g}}$ at every point $p\in\mu^{-1}(0)$},
	\end{equation}
	\begin{equation}\label{e-gue170111ryII}
	e^{i\theta}\circ g\circ x=g\circ e^{i\theta}\circ x,\  \ \forall x\in X,\ \ \forall\theta\in[0,2\pi[,\ \ \forall g\in G, 
	\end{equation}
	and 
	\begin{equation}\label{e-gue170117t}
	\mbox{$G\times S^1$ acts locally freely near $\mu^{-1}(0)$}. 
	\end{equation}
\end{ass}

Let $u\in\Omega^{0,q}(X)$. Define
\begin{equation}\label{e-gue150508faIIm}
Tu:=\frac{\pr}{\pr\theta}\bigr((e^{i\theta})^*u\bigr)|_{\theta=0}\in\Omega^{0,q}(X).
\end{equation}
For every $m\in\mathbb Z$, let
\begin{equation}\label{e-gue150508dIm}
\begin{split}
&\Omega^{0,q}_m(X):=\set{u\in\Omega^{0,q}(X);\, Tu=imu},\ \ q=0,1,2,\ldots,n,\\
&\Omega^{0,q}_{m}(X)_k=\set{u\in\Omega^{0,q}(X)_k;\, Tu=imu},\ \ q=0,1,2,\ldots,n.
\end{split}
\end{equation}
Denote by $C^\infty_m(X):=\Omega^{0,0}_m(X)$, $C^\infty_{m}(X)_k:=\Omega^{0,0}_{m}(X)_k$. From the CR property of the $S^1$ action and \eqref{e-gue170111ryII}, we have
\[Tg^*\ddbar_b=g^*T\ddbar_b=\ddbar_bg^*T=\ddbar_bTg^*\ \ \mbox{on $\Omega^{0,q}(X)$},\ \ \forall g\in G.\]
Hence,
\begin{equation}\label{e-gue160527m}
\ddbar_b:\Omega^{0,q}_{m}(X)_k\To\Omega^{0,q+1}_{m}(X)_k,\ \ \forall m\in\mathbb Z.
\end{equation}
Assume that the Hermitian metric $\langle\,\cdot\,|\,\cdot\,\rangle$ on $\Complex TX$ is $G\times S^1$ invariant.  Then the $L^2$ inner product $(\,\cdot\,|\,\cdot\,)$ on $\Omega^{0,q}(X)$ 
induced by $\langle\,\cdot\,|\,\cdot\,\rangle$ is $G\times S^1$-invariant. We then have 
\[\begin{split}
&Tg^*\ol{\pr}^*_b=g^*T\ol{\pr}^*_b=\ol{\pr}^*_bg^*T=\ol{\pr}^*_bTg^*\ \ \mbox{on $\Omega^{0,q}(X)$},\ \ \forall g\in G,\\
&Tg^*\Box^{(q)}_b=g^*T\Box^{(q)}_b=\Box^{(q)}_bg^*T=\Box^{(q)}_bTg^*\ \ \mbox{on $\Omega^{0,q}(X)$},\ \ \forall g\in G,
\end{split}\]
where $\ol{\pr}^*_b$ is the $L^2$ adjoint of $\ddbar_b$ with respect to $(\,\cdot\,|\,\cdot\,)$. 

Let $L^2_{(0,q), m}(X)_k$ be
the completion of $\Omega_{m}^{0,q}(X)_k$ with respect to $(\,\cdot\,|\,\cdot\,)$. 
Write $L^2_{m}(X)_k:=L^2_{(0,0),m}(X)_k$. Put 
\[H^q_{b,m}(X)_k:=( {\rm Ker\,}\Box^{(q)}_{b} )\bigcap L^2_{(0,q),m}(X)_k.\]
Since $\Box^{(q)}_{b}-T^2$ is elliptic, we have for every $m\in\mathbb Z$, $H^q_{b,m}(X)_k\subset\Omega^{0,q}_{m}(X)_k$ and ${\rm dim\,}H^q_{b,m}(X)_k<\infty$.

\begin{definition}
The $m$-th $G$-equivariant Szeg\H{o} projection is the orthogonal projection 
\[S^{(q)}_{k,m}:L^2_{(0,q)}(X)\To  ( {\rm Ker\,}\Box^{(q)}_{b} )\bigcap L^2_{(0,q),m}(X)_k\]
with respect to $(\,\cdot\,|\,\cdot\,)$. Let $S^{(q)}_{k, m}(x,y)\in C^\infty(X\times X,T^{*0,q}X\boxtimes(T^{*0,q}X)^*)$ be the distribution kernel of $S^{(q)}_{k,m}$. 
\end{definition}

The second main result of this work is the following 

\begin{theorem}\label{t-gue170128I}
	With the assumptions and notations used above, if $q\notin n_-$, then, as $m\To+\infty$, $S^{(q)}_{k,m}=O(m^{-\infty})$ on $X$. 
	
	Suppose  $q=n_-$. Let $D$ be an open neighborhood of $X$ with $D\bigcap\mu^{-1}(0)=\emptyset$. Then, as $m\To+\infty$, 
	\[S^{(q)}_{k,m}=O(m^{-\infty})\ \ \mbox{on $D$}.\]
	Let $p\in\mu^{-1}(0)$ and let $N_p=\{g\in G: g\circ p=p\}=\{g_1=e_0,g_2...,g_r\}$.
	 Let $U$ be an open neighborhood of  $p$ and let $x=(x_1,\ldots,x_{2n+1})$ be local coordinates defined in $U$. 
	Then, as $m\To+\infty$, 
	\begin{equation}\label{e-gue170117pVIIIm}
	\begin{split}
	&S^{(q)}_{k,m}(x,y)\equiv\sum_{\alpha=1}^{r}e^{im\Psi_k(g_\alpha\circ x,y)}b_{k,\alpha}(x,y,m),\\
	&b_{k,\alpha}(x,y,m)\in S^{n-\frac{d}{2}}_{{\rm loc\,}}(1; U\times U, T^{*0,q}X\boxtimes(T^{*0,q}X)^*),\\
	&\mbox{$b_{k,\alpha}(x,y,m)\sim\sum^\infty_{j=0}m^{n-\frac{d}{2}-j}b_{k,\alpha}^j(x,y)$ in $S^{n-\frac{d}{2}}_{{\rm loc\,}}(1; U\times U, T^{*0,q}X\boxtimes(T^{*0,q}X)^*)$},\\
	&b_{k,\alpha}^j(x,y)\in C^\infty(U\times U, T^{*0,q}X\boxtimes(T^{*0,q}X)^*),\ \ j=0,1,2,\ldots,
	\end{split}
	\end{equation}
	%\begin{equation}\label{e-gue170117pVIIIam}
	%b_0(x,x)=2^{d-1}\frac{1}{V_{{\rm eff\,}}(x)}\pi^{-n-1+\frac{d}{2}}\abs{\det R_x}^{-\frac{1}{2}}\abs{\det\mathcal{L}_{x}}\tau_{x,n_-},\ \ \forall x\in\mu^{-1}(0),
	%\end{equation}
	%where $\tau_{x,n_-}$ is given by \eqref{tau140530}, and
	$\Psi_k(x,y)\in C^\infty(U\times U)$, $d_x\Psi_k(x,x)=-d_y\Psi_k(x,x)=-\omega_0(x)$, for every $x\in\mu^{-1}(0)$, $\Psi_k(x,y)=0$ if and only if $x=y\in\mu^{-1}(0)$. 
	
	In particular, if $G \times S^1$ acts freely near $\mu^{-1}(0)$, then
	\begin{equation}\label{e-gue170117pVIIIam}
	b_{k}^0(x,x)=2^{d-1}\frac{d^2_k}{V_{{\rm eff\,}}(x)}\pi^{-n-1+\frac{d}{2}}\abs{\det R_x}^{-\frac{1}{2}}\abs{\det\mathcal{L}_{x}}\tau_{x,n_-},\ \ \forall x\in\mu^{-1}(0),
	\end{equation}
	where $\tau_{x,n_-}$ is given by \eqref{tau140530}.
\end{theorem}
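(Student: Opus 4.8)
The plan is to obtain $S^{(q)}_{k,m}$ from the $G$-equivariant Szeg\H{o} projection $S^{(q)}_k$ of Theorem~\ref{t-gue170124} by extracting its $m$-th Fourier component along the $S^1$ action. By \eqref{e-gue170111ryII} the $S^1$ action commutes with $G$, and by the $G\times S^1$-invariance of the metric it commutes with $\ddbar_b$ and $\ol{\pr}^*_b$, hence with $\Box^{(q)}_b$. Therefore the orthogonal projection $Q_m$ onto $\set{u;\, Tu=imu}$ commutes with $S^{(q)}_k$, so $S^{(q)}_{k,m}=Q_mS^{(q)}_k=S^{(q)}_kQ_m$, and at the level of kernels
\[
S^{(q)}_{k,m}(x,y)=\frac{1}{2\pi}\int_0^{2\pi}e^{-im\theta}\,S^{(q)}_k(e^{i\theta}\circ x,y)\,d\theta.
\]
Since $\Box^{(q)}_b-T^2$ is elliptic and $H^q_{b,m}(X)_k$ is finite dimensional with smooth elements, $S^{(q)}_{k,m}$ is a finite-rank smoothing operator; the relation $\equiv$ in \eqref{e-gue170117pVIIIm} is then to be read as equality modulo $O(m^{-\infty})$.

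First I would treat the region away from $\mu^{-1}(0)$. On an open set $D$ with $D\cap\mu^{-1}(0)=\emptyset$, Theorem~\ref{t-gue170124} gives $S^{(q)}_k\equiv0$, i.e.\ $S^{(q)}_k$ is smooth there; integrating a smooth $\theta$-family against $e^{-im\theta}$ and integrating by parts in $\theta$ repeatedly yields $S^{(q)}_{k,m}=O(m^{-\infty})$ on $D$, and likewise $S^{(q)}_{k,m}=O(m^{-\infty})$ globally when $q\notin\set{n_-,n_+}$. It remains to analyze a neighborhood $U$ of $p\in\mu^{-1}(0)$. There I substitute $S^{(q)}_k\equiv\hat S_{k,-}+\hat S_{k,+}$ from \eqref{e-gue170108wrm}--\eqref{e-gue170108wrIIm}, producing for each isotropy element $g_\alpha$ the oscillatory integrals
\[
\frac{1}{2\pi}\int_0^{2\pi}\!\!\int_0^\infty e^{i(\Phi_{k,\mp}(g_\alpha\circ e^{i\theta}\circ x,\,y)t-m\theta)}\,a_{k,\alpha,\mp}(e^{i\theta}\circ x,y,t)\,dt\,d\theta.
\]
Rescaling $t=m\sigma$ turns these into semiclassical integrals with large parameter $m$ and phase $\psi_\mp=\Phi_{k,\mp}(g_\alpha\circ e^{i\theta}\circ x,y)\sigma-\theta$, to which I would apply the complex stationary phase method of Melin--Sj\"ostrand.

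Next comes the stationary phase analysis and the selection of $q=n_-$. The critical point equations $\partial_\sigma\psi_\mp=\Phi_{k,\mp}=0$ and $\partial_\theta\psi_\mp=\sigma\,\partial_\theta\Phi_{k,\mp}-1=0$ force, via ${\rm Im\,}\Phi_{k,-}\geq0$ and $d_x\Phi_{k,-}(x,x)=-\omega_0(x)$ together with the transversality \eqref{e-gue170111ryI} (which gives $\omega_0(T)\neq0$ on $\mu^{-1}(0)$), a single nondegenerate critical point localized at $x=y\in\mu^{-1}(0)$ for the $-$ phase; its critical value defines $\Psi_k(x,y)$, and one checks $d_x\Psi_k(x,x)=-\omega_0(x)$ with $\Psi_k(x,y)=0$ if and only if $x=y\in\mu^{-1}(0)$. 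For the $+$ phase, $-\ol\Phi_{k,+}$ satisfies \eqref{e-gue170125}, so ${\rm Im\,}\Phi_{k,+}\leq0$; with the sign of $\omega_0(T)$ fixed and $m\to+\infty$ there is no real critical point, and the $+$ contribution is $O(m^{-\infty})$. Hence only $q=n_-$ survives. Stationary phase then produces the symbol expansion \eqref{e-gue170117pVIIIm} with smooth coefficients $b^j_{k,\alpha}$, and its leading term is $a^0_{k,\alpha,-}(x,x)$ from Theorem~\ref{t-gue170128} multiplied by the reciprocal square root of the Hessian determinant of $\psi_-$ in $(\sigma,\theta)$; summing over $\alpha$ and, in the free case, inserting the formula \eqref{e-gue170128} for $a^0_{k,-}(x,x)$ gives \eqref{e-gue170117pVIIIam}.

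The hard part will be the rigorous complex stationary phase: verifying nondegeneracy of the Hessian of $\psi_\mp$, confirming that the critical point lies in the open integration region exactly for the $-$ phase and $m\to+\infty$, and correctly tracking the resulting Hessian factor so that it combines with $a^0_{k,-}(x,x)$ to reproduce the clean coefficient \eqref{e-gue170117pVIIIam} with its $V_{{\rm eff\,}}(x)$ and $\abs{\det R_x}^{-\frac12}$ factors. A secondary difficulty is the locally free (rather than free) case, where the finite isotropy group $N_p=\set{g_1,\ldots,g_r}$ produces the sum over $\alpha$ in \eqref{e-gue170117pVIIIm}: one must ensure the contributions of the $g_\alpha$ do not interfere on the diagonal and that the orbifold volume $V_{{\rm eff\,}}(x)$ enters with the correct normalization.
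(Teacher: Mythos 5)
Your proposal follows essentially the same route as the paper: express $S^{(q)}_{k,m}$ as the $m$-th Fourier coefficient in $\theta$ of $S^{(q)}_k$, use Theorem~\ref{t-gue170124} to reduce to the far/near $\mu^{-1}(0)$ dichotomy, kill the $\hat S_{k,+}$ contribution by non-stationarity of its phase as $m\To+\infty$, and apply the Melin--Sj\"ostrand complex stationary phase formula \cite{MS74} to the rescaled $\hat S_{k,-}$ piece, summing over the isotropy elements $g_\alpha$; this is exactly the paper's decomposition $S^{(q)}_{k,m}=I+II$, $I=I_0+I_1$, with $II, I_1=O(m^{-\infty})$ and $I_0$ handled by stationary phase.

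Two corrections. First, your justification for discarding the $+$ piece contains a sign error: from the fact that $-\ol\Phi_{k,+}$ satisfies \eqref{e-gue170125} one gets ${\rm Im\,}\Phi_{k,+}={\rm Im\,}(-\ol\Phi_{k,+})\geq 0$, \emph{not} $\leq 0$, so positivity of the imaginary part cannot distinguish the two phases. What actually kills the $+$ term is the sign of the \emph{real} differential: conjugating gives $d_x\Phi_{k,+}(x,x)=+\omega_0(x)$, so with $\omega_0(T)=-1$ (which is the normalization \eqref{e-gue170111ry}, not a consequence of the transversality assumption \eqref{e-gue170111ryI}) the $\theta$-derivative of the full phase is comparable to $-(\sigma+1)$ after the rescaling $t=m\sigma$ --- in the paper's notation $\frac{\pr}{\pr\theta}\bigl(i(\theta+\hat\Phi_{k,+})t+im\theta\bigr)=i(t+m)\neq 0$ --- and repeated integration by parts in $\theta$ yields $O(m^{-\infty})$. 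Second, Theorem~\ref{t-gue170124}, which you invoke as your starting point, carries the hypothesis that $\Box^{(q)}_b$ has $L^2$ closed range, whereas Theorem~\ref{t-gue170128I} does not; you need to note that closed range is automatic here because $X$ admits a transversal CR $S^1$ action (\cite[Theorem 1.12]{HM14}), as the paper does at the start of its proof. With these two repairs your argument matches the paper's.
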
 

We provide a special case when $G=T^d$ on an irregular Sasakian manifold, where $T^d$
denotes the $d$-dimensional torus. Recall that a CR manifold $X$ is irregular Sasakian if it admits a CR transversal $\mathbb{R}$-action, which does not come from any circle action.
The $\mathbb{R}$-action can be interpreted as a CR torus action $T^{d+1}=T^d\times S^1$ (cf. Herrmann-Hsiao-Li \cite{HHL}). 
Fix $(p_1,...,p_d)\in\mathbb{Z}^d$, we define a $T^d$-action as follows:
\begin{equation}\label{e-1101}
\begin{split}
T^d\times X&\rightarrow X \\
\bigl((e^{i\theta_1},...,e^{i\theta_d} ),x \bigr)&\mapsto
(e^{i\theta_1},...,e^{i\theta_d},e^{-ip_1\theta_1-\cdot\cdot\cdot-ip_d\theta_d})\circ x.
\end{split}
\end{equation}
The $T^d$-action satisfies Assumption \ref{a-gue170128}. 
All irreducible unitary representations of $T^d$ are $\{R_{p_1,...,p_d}:(e^{i\theta_1},...,e^{i\theta_d})\mapsto 
e^{ip_1\theta_1+\cdot\cdot\cdot+ip_1\theta_d} \}$. Let $T_j$ be the induced vector fields of the $T^d$-action. That is, $T_ju(x):=\frac{\pr}{\pr\theta_j}|_{\theta_j=0}u\bigl((1,...,e^{i\theta_j},...,1)\circ x \bigr)$, for $j=1,...,d$ and $u\in C^\infty(X)$. Then
\begin{equation}\label{e-1104}
H_{b,mp_1,...,mp_d,m}^q(X)=\{u\in H_b^q(X):Tu=imu, T_ju=imp_ju,j=1,...,d  \}.
\end{equation}
The $m$-th equivariant Szeg\H{o} kernel is the distributional kernel of the orthogonal projection $S_{mp_1,...,mp_d,m}^q: L^2_{(0,q)}\rightarrow H_{b,mp_1,...,mp_d,m}^q(X)$.

Assume that $(-p_1,...,-p_d)\in\mathbb{Z}^d$ is a regular value of the torus invariant CR moment map
\begin{equation}\label{e-1105}
\mu_0: X\rightarrow\mathbb{R}^d, \mu_0(x):=\bigl(\langle\omega_0(x),T_1(x)\rangle,...,\langle\omega_0(x),T_d(x)\rangle \bigr).
\end{equation}
The CR moment map of $T^d$-action defined in \eqref{e-1101} is 
\begin{equation}\label{e-1106}
\mu: X\rightarrow\mathbb{R}^d, \mu(x):=\bigl(\langle\omega_0(x),-p_1T+T_1(x)\rangle,...,\langle\omega_0(x),-p_dT+T_d(x)\rangle \bigr).
\end{equation}
For $x\in\mu_0^{-1}(-p_1,...,-p_d)$, $\omega_0(-p_jT+T_j)=0$. Then $0$ is the regular value of $\mu$.
By Theorem \ref{t-gue170128I}, we deduce the following which covers Shen's result \cite{Shen} when $X$ is strongly pseudoconvex.
\begin{corollary}\label{c-1101}
Fix $(-p_1,...,-p_d)\in\mathbb{Z}^d$, the action of $T^d$ on the irregular Sasakian manifold $X$ is defined in \eqref{e-1101}. With the assumptions and notations used above, 
 If $q\notin n_-$, then, as $m\To+\infty$, $S^{(q)}_{mp_1,...,mp_d,m}=O(m^{-\infty})$ on $X$. 

Suppose  $q=n_-$. Let $D$ be an open neighborhood of $X$ with $D\bigcap\mu^{-1}(0)=\emptyset$. Then, as $m\To+\infty$, 
\[S^{(q)}_{mp_1,...,mp_d,m}=O(m^{-\infty})\ \ \mbox{on $D$}.\]
Let $p\in\mu^{-1}(0)$ and let $N_p=\{g\in G: g\circ p=p\}=\{g_1=e_0,g_2...,g_r\}$.
Let $U$ be an open neighborhood of  $p$ and let $x=(x_1,\ldots,x_{2n+1})$ be local coordinates defined in $U$. 
Then, as $m\To+\infty$, 
\begin{equation}\label{e-1102}
\begin{split}
&S^{(q)}_{mp_1,...,mp_d,m}(x,y)\equiv\sum_{\alpha=1}^{r}e^{im\Psi(g_\alpha\circ x,y)}b_{\alpha}(x,y,m),\\
&b_{\alpha}(x,y,m)\in S^{n-\frac{d}{2}}_{{\rm loc\,}}(1; U\times U, T^{*0,q}X\boxtimes(T^{*0,q}X)^*),\\
&\mbox{$b_{\alpha}(x,y,m)\sim\sum^\infty_{j=0}m^{n-\frac{d}{2}-j}b_{\alpha}^j(x,y)$ in $S^{n-\frac{d}{2}}_{{\rm loc\,}}(1; U\times U, T^{*0,q}X\boxtimes(T^{*0,q}X)^*)$},\\
&b_{\alpha}^j(x,y)\in C^\infty(U\times U, T^{*0,q}X\boxtimes(T^{*0,q}X)^*),\ \ j=0,1,2,\ldots,
\end{split}
\end{equation}
$\Psi(x,y)\in C^\infty(U\times U)$, $d_x\Psi(x,x)=-d_y\Psi(x,x)=-\omega_0(x)$, for every $x\in\mu^{-1}(0)$, $\Psi(x,y)=0$ if and only if $x=y\in\mu^{-1}(0)$. 

In particular, if $T^d \times S^1$ acts freely near $\mu^{-1}(0)$, then
\begin{equation}\label{e-1103}
b^0(x,x)=\frac{2^{d-1}}{V_{{\rm eff\,}}(x)}\pi^{-n-1+\frac{d}{2}}\abs{\det R_x}^{-\frac{1}{2}}\abs{\det\mathcal{L}_{x}}\tau_{x,n_-},\ \ \forall x\in\mu^{-1}(0),
\end{equation}
where $\tau_{x,n_-}$ is given by \eqref{tau140530}.
\end{corollary}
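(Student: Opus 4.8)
The plan is to obtain Corollary~\ref{c-1101} as a direct specialization of Theorem~\ref{t-gue170128I} to the group $G=T^d$ acting through the twisted embedding \eqref{e-1101}. Two points must be checked: first, that this action satisfies Assumption~\ref{a-gue170128} (and Assumption~\ref{a-gue170123II}), so that Theorem~\ref{t-gue170128I} applies; and second, that the joint eigenspace \eqref{e-1104} coincides with the $m$-th $G$-equivariant space $H^q_{b,m}(X)_k$ of Theorem~\ref{t-gue170128I} for a suitable irreducible representation $R_k$. Everything else is then a transcription of the theorem.

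First I would compute the infinitesimal generators of \eqref{e-1101}. Differentiating in $\theta_j$ at $\theta=0$ gives the $j$-th generator $\widetilde T_j:=T_j-p_jT$, so that $\underline{\mathfrak g}={\rm Span\,}(\widetilde T_1,\ldots,\widetilde T_d)$. Because the irregular Sasakian structure furnishes a locally free CR torus action $T^{d+1}=T^d\times S^1$, the fields $T_1,\ldots,T_d,T$ are pointwise linearly independent near $\mu^{-1}(0)$; the same then holds for $\widetilde T_1,\ldots,\widetilde T_d,T$, which simultaneously yields the transversality \eqref{e-gue170111ryI} of $T$ to $\underline{\mathfrak g}$ and the local freeness \eqref{e-gue170117t} of $G\times S^1=T^{d+1}$. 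The commutation relation \eqref{e-gue170111ryII} is immediate from the abelian-ness of $T^{d+1}$. Moreover, the generator computation shows that the moment map of Definition~\ref{d-gue170124} for this $G$-action is exactly \eqref{e-1106}, and the identity $\omega_0(-p_jT+T_j)=0$ on $\mu_0^{-1}(-p_1,\ldots,-p_d)$ (recorded in the text before the corollary) identifies $\mu^{-1}(0)$ with that level set and shows $0$ is a regular value, giving Assumption~\ref{a-gue170123II}.

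Next I would pin down the relevant representation. As $T^d$ is abelian, its irreducible unitary representations are the one-dimensional characters $R_{q_1,\ldots,q_d}$, so $d_k=1$ in every sector. A section $u$ in the $R_{q_1,\ldots,q_d}$-isotypic component satisfies $\widetilde T_j u=iq_j u$, that is $(T_j-p_jT)u=iq_ju$; together with the $S^1$-Fourier condition $Tu=imu$ this gives $T_ju=i(q_j+mp_j)u$. Hence the space \eqref{e-1104}, cut out by $T_ju=imp_ju$ and $Tu=imu$, is precisely the $m$-th $G$-equivariant space for the \emph{trivial} character $q_1=\cdots=q_d=0$, so that $S^{(q)}_{mp_1,\ldots,mp_d,m}=S^{(q)}_{k,m}$ with $d_k=1$. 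Applying Theorem~\ref{t-gue170128I} verbatim then delivers the $O(m^{-\infty})$ vanishing, the complex-phase expansion \eqref{e-1102} with its phase $\Psi$, and, upon setting $d_k=1$ in \eqref{e-gue170117pVIIIam}, the leading coefficient \eqref{e-1103}.

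The step I expect to be the main obstacle is the eigenvalue bookkeeping in the last paragraph: one must track carefully how the twist by $-p_jT$ in \eqref{e-1101} shifts the $T_j$-eigenvalues, so that it is the \emph{trivial} $G$-isotypic sector---rather than a shifted one---that reproduces the joint eigenvalue $(mp_1,\ldots,mp_d,m)$. Once this identification is in place, the coincidence of the two moment maps guarantees that $R_x$, $\mathcal L_x$, $V_{{\rm eff\,}}(x)$ and $\tau_{x,n_-}$ are all evaluated on the same level set $\mu^{-1}(0)$, and the conclusion follows directly from Theorem~\ref{t-gue170128I}.
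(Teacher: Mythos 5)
Your proposal follows the paper's own route: Corollary \ref{c-1101} is deduced by specializing Theorem \ref{t-gue170128I} to the twisted $T^d$-action \eqref{e-1101}, and your central bookkeeping step---that the generators are $\widetilde T_j=T_j-p_jT$, so that the joint eigenspace \eqref{e-1104} is exactly the $m$-th equivariant space for the \emph{trivial} character of the twisted action, whence $d_k=1$ and \eqref{e-gue170117pVIIIam} turns into \eqref{e-1103}---is precisely the identification that makes the paper's (largely implicit) deduction work.

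One step of your verification of Assumption \ref{a-gue170128} is, however, not correctly justified. You claim that ``the irregular Sasakian structure furnishes a locally free CR torus action $T^{d+1}$'', so that $T_1,\ldots,T_d,T$ are pointwise independent near $\mu^{-1}(0)$. That premise is false in general: for instance on a weighted sphere $S^{2n+1}\subset\mathbb{C}^{n+1}$ the torus has positive-dimensional stabilizers where coordinates vanish, even though the Reeb $\mathbb{R}$-action is free; local freeness of the ambient torus action is not part of the irregular Sasakian structure. The correct source of the needed facts is the regular-value hypothesis itself. Since the action preserves $\omega_0$, Cartan's formula gives $d\langle\mu,\xi\rangle=-\iota_{\xi_X}d\omega_0$; hence if $\sum_j c_j\widetilde T_j(x)=0$ at some $x\in\mu^{-1}(0)$, then $\sum_j c_j\,d\mu_j(x)=0$, and regularity of the value $0$ forces $c_1=\cdots=c_d=0$. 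Combined with $\omega_0(\widetilde T_j)=0\neq\omega_0(T)$ on $\mu^{-1}(0)$, this gives pointwise linear independence of $\widetilde T_1,\ldots,\widetilde T_d,T$ on, and by continuity near, $\mu^{-1}(0)$, which is exactly \eqref{e-gue170111ryI}, \eqref{e-gue170117t}, and the local freeness required in Assumption \ref{a-gue170123II}. With that substitution your argument is complete and coincides with the paper's.
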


%Let $\tilde T$ be the real vector field induced by the $\mathbb{R}$-action, it is of importance to understand the space $H^0_{b,\alpha}:=\{u\in C^{\infty}(X):\bar\pr_b u=0, \tilde Tu=i\alpha u\}$. Herrmann-Hsiao-Li [] showed that there are many $\alpha\in$Spec$(\tilde T)$ such that $H^0_{b,\alpha}$ is non-trivial.

\section{Preliminaries}\label{s:prelim}
\subsection{Standard notations} \label{s-ssna}

Let $M$ be a $C^\infty$ paracompact manifold.
We let $TM$ and $T^*M$ denote the tangent bundle of $M$
and the cotangent bundle of $M$, respectively.
The complexified tangent bundle of $M$ and the complexified cotangent bundle of $M$ will be denoted by $\Complex TM$
and $\Complex T^*M$, respectively. Write $\langle\,\cdot\,,\cdot\,\rangle$ to denote the pointwise
duality between $TM$ and $T^*M$.
We extend $\langle\,\cdot\,,\cdot\,\rangle$ bilinearly to $\Complex TM\times\Complex T^*M$.

Let $F$ be a $C^\infty$ vector bundle over $M$. The fiber of $F$ at $x\in M$ will be denoted by $F_x$.
Let $E$ be a vector bundle over a $C^\infty$ paracompact manifold $M_1$. We write
$F\boxtimes E^*$ to denote the vector bundle over $M\times M_1$ with fiber over $(x, y)\in M\times M_1$
consisting of the linear maps from $E_y$ to $F_x$.  Let $Y\subset M$ be an open set. 
From now on, the spaces of distribution sections of $F$ over $Y$ and
smooth sections of $F$ over $Y$ will be denoted by $D'(Y, F)$ and $C^\infty(Y, F)$, respectively.
Let $E'(Y, F)$ be the subspace of $D'(Y, F)$ whose elements have compact support in $Y$. Put $C^\infty_c(M, F):=C^\infty(M, F) \cap E'(M, F)$.
%For $m\in\Real$, let $H^m(Y, B)$ denote the Sobolev space
%of order $m$ of sections of $B$ over $Y$. Put
%\begin{gather*}
%H^m_{\rm loc\,}(Y, B)=\big\{u\in D'(Y, B);\, \varphi u\in H^m(Y, B),
%    \, \forall\varphi\in C^\infty_0(Y)\big\}\,,\\
%       H^m_{\rm comp\,}(Y, B)=H^m_{\rm loc}(Y, B)\cap E'(Y, B)\,.
%\end{gather*}

We recall the Schwartz kernel theorem \cite[Theorems\,5.2.1, 5.2.6]{Hor03}, \cite[Thorem\,B.2.7]{MM}.
Let $F$ and $E$ be $C^\infty$ vector
bundles over paracompact orientable $C^\infty$ manifolds $M$ and $M_1$, respectively, equipped with smooth densities of integration. If
$A: C^\infty_c(M_1,E)\To D'(M,F)$
is continuous, we write $K_A(x, y)$ or $A(x, y)$ to denote the distribution kernel of $A$.
The following two statements are equivalent
\begin{enumerate}
	\item $A$ is continuous: $E'(M_1,E)\To C^\infty(M,F)$,
	\item $K_A\in C^\infty(M\times M_1,F\boxtimes E^*)$.
\end{enumerate}
If $A$ satisfies (1) or (2), we say that $A$ is smoothing on $M \times M_1$. Let
$A,\hat A: C^\infty_0(M_1,E)\To D'(M,F)$ be continuous operators.
We write 
\begin{equation} \label{e-gue160507f}
\mbox{$A\equiv \hat A$ (on $M\times M_1$)} 
\end{equation}
if $A-\hat A$ is a smoothing operator. If $M=M_1$, we simply write ``on $M$". 

Let $H(x,y)\in D'(M\times M_1,F\boxtimes E^*)$. We write $H$ to denote the unique 
continuous operator $C^\infty_c(M_1,E)\To D'(M,F)$ with distribution kernel $H(x,y)$. 
In this work, we identify $H$ with $H(x,y)$.

\subsection{Some standard notations in semi-classical analysis}\label{s-gue170111w}

Let $W_1$ be an open set in $\Real^{N_1}$ and let $W_2$ be an open set in $\Real^{N_2}$. Let $E$ and $F$ be vector bundles over $W_1$ and $W_2$, respectively. 
An $m$-dependent continuous operator
$A_m:C^\infty_c(W_2,F)\To D'(W_1,E)$ is called $m$-negligible on $W_1\times W_2$
if, for $m$ large enough, $A_m$ is smoothing and, for any $K\Subset W_1\times W_2$, any
multi-indices $\alpha$, $\beta$ and any $N\in\mathbb N$, there exists $C_{K,\alpha,\beta,N}>0$
such that
\begin{equation}\label{e-gue13628III}
\abs{\pr^\alpha_x\pr^\beta_yA_m(x, y)}\leq C_{K,\alpha,\beta,N}m^{-N}\:\: \text{on $K$},\ \ \forall m\gg1.
\end{equation}
In that case we write
\[A_m(x,y)=O(m^{-\infty})\:\:\text{on $W_1\times W_2$,}\]
or
\[A_m=O(m^{-\infty})\:\:\text{on $W_1\times W_2$.}\]
If $A_m, B_m:C^\infty_c(W_2, F)\To D'(W_1, E)$ are $m$-dependent continuous operators,
we write $A_m= B_m+O(m^{-\infty})$ on $W_1\times W_2$ or $A_m(x,y)=B_m(x,y)+O(m^{-\infty})$ on $W_1\times W_2$ if $A_m-B_m=O(m^{-\infty})$ on $W_1\times W_2$. 
When $W=W_1=W_2$, we sometime write ``on $W$".

Let $X$ and $M$ be smooth manifolds and let $E$ and $F$ be vector bundles over $X$ and $M$, respectively. Let $A_m, B_m:C^\infty(M,F)\To C^\infty(X,E)$ be $m$-dependent smoothing operators. We write $A_m=B_m+O(m^{-\infty})$ on $X\times M$ if on every local coordinate patch $D$ of $X$ and local coordinate patch $D_1$ of $M$, $A_m=B_m+O(m^{-\infty})$ on $D\times D_1$.
When $X=M$, we sometime write on $X$.

We recall the definition of the semi-classical symbol spaces

\begin{definition} \label{d-gue140826}
	Let $W$ be an open set in $\Real^N$. Let
	%$S(1;W)=S(1)$ be the set of
	%$a\in \cC^\infty(W)$ such that for every $\alpha\in\mathbb N^N_0$, there
	%exists $C_\alpha>0$, such that $\abs{\pr^\alpha_xa(x)}\leq
	%C_\alpha$ on $W$.
	\begin{gather*}
		S(1;W):=\Big\{a\in C^\infty(W)\,|\, \forall\alpha\in\mathbb N^N_0:
		\sup_{x\in W}\abs{\pr^\alpha a(x)}<\infty\Big\},\\
		S^0_{{\rm loc\,}}(1;W):=\Big\{(a(\cdot,m))_{m\in\Real}\,|\, \forall\alpha\in\mathbb N^N_0,
		\forall \chi\in C^\infty_0(W)\,:\:\sup_{m\in\Real, m\geq1}\sup_{x\in W}\abs{\pr^\alpha(\chi a(x,m))}<\infty\Big\}\,.
	\end{gather*}
	%The space $S_{{\rm loc\,}}(1;W)$ is the set of sequences $a=(a(\cdot,k))$ in $\cC^\infty(W)$
	%with the property that for any $\chi\in\cC^\infty_0(W)$ we have
	%%\[
	%$\sup_{k\in\N}\sup_{x\in W}\abs{\pr^\alpha a(x,k)}<\infty$\,.
	%%\]
	%If $a=a(x,k)$ depends on $k\in]1,\infty[$, we say that
	%$a(x,k)\in S_{{\rm loc\,}}(1;W)=S_{{\rm loc\,}}(1)$ if $\chi(x)a(x,k)$ is uniformly bounded
	%in $S(1)$ when $k$ varies in $]1,\infty[$, for any $\chi\in\cC^\infty_0(W)$.
	For $k\in\Real$, let
	\[
	S^k_{{\rm loc}}(1):=S^k_{{\rm loc}}(1;W)=\Big\{(a(\cdot,m))_{m\in\Real}\,|\,(m^{-k}a(\cdot,m))\in S^0_{{\rm loc\,}}(1;W)\Big\}\,.
	\]
	%For $m\in\Real$, we put $S^m_{{\rm loc}}(1;W)=\{(a(\cdot,k)):(k^{-m}a(\cdot,k))\in S_{{\rm loc\,}}(1;W)\}$.
	%For $m\in\Real$, we put $S^m_{{\rm loc}}(1;W)=S^m_{{\rm loc}}(1)=k^mS_{{\rm loc\,}}(1)$.
	Hence $a(\cdot,m)\in S^k_{{\rm loc}}(1;W)$ if for every $\alpha\in\mathbb N^N_0$ and $\chi\in C^\infty_0(W)$, there
	exists $C_\alpha>0$ independent of $m$, such that $\abs{\pr^\alpha (\chi a(\cdot,m))}\leq C_\alpha m^{k}$ holds on $W$.
	
	Consider a sequence $a_j\in S^{k_j}_{{\rm loc\,}}(1)$, $j\in\N_0$, where $k_j\searrow-\infty$,
	and let $a\in S^{k_0}_{{\rm loc\,}}(1)$. We say
	\[
	a(\cdot,m)\sim
	\sum\limits^\infty_{j=0}a_j(\cdot,m)\:\:\text{in $S^{k_0}_{{\rm loc\,}}(1)$},
	\]
	if, for every
	$\ell\in\N_0$, we have $a-\sum^{\ell}_{j=0}a_j\in S^{k_{\ell+1}}_{{\rm loc\,}}(1)$ .
	For a given sequence $a_j$ as above, we can always find such an asymptotic sum
	$a$, which is unique up to an element in
	$S^{-\infty}_{{\rm loc\,}}(1)=S^{-\infty}_{{\rm loc\,}}(1;W):=\cap _kS^k_{{\rm loc\,}}(1)$.
	
	Similarly, we can define $S^k_{{\rm loc\,}}(1;Y,E)$ in the standard way, where $Y$ is a smooth manifold and $E$ is a vector bundle over $Y$. 
\end{definition}

\subsection{CR manifolds} 

Let $(X, T^{1,0}X)$ be a compact, connected and orientable CR manifold of dimension $2n+1$, $n\geq 1$, where $T^{1,0}X$ is a CR structure of $X$, that is, $T^{1,0}X$ is a subbundle of rank $n$ of the complexified tangent bundle $\mathbb{C}TX$, satisfying $T^{1,0}X\cap T^{0,1}X=\{0\}$, where $T^{0,1}X=\overline{T^{1,0}X}$, and $[\mathcal V,\mathcal V]\subset\mathcal V$, where $\mathcal V=C^\infty(X, T^{1,0}X)$. There is a unique subbundle $HX$ of $TX$ such that $\mathbb{C}HX=T^{1,0}X \oplus T^{0,1}X$, i.e. $HX$ is the real part of $T^{1,0}X \oplus T^{0,1}X$. Let $J:HX\To HX$ be the complex structure map given by $J(u+\ol u)=iu-i\ol u$, for every $u\in T^{1,0}X$. 
%Furthermore, there exists a homomorphism $J : HX \to HX$ such that $J^2 = - \operatorname{id}$, where $\operatorname{id}$ denotes the identity map $\operatorname{id}  \, : \, HX \to HX.$ 
By complex linear extension of $J$ to $\mathbb{C}TX$, the $i$-eigenspace of $J$ is $T^{1,0}X \, = \, \left\{ V \in \mathbb{C}HX \, : \, JV \, =  \,  \sqrt{-1}V  \right\}.$ We shall also write $(X, HX, J)$ to denote a compact CR manifold.

We  fix a real non-vanishing $1$ form $\omega_0\in C(X,T^*X)$ so that $\langle\,\omega_0(x)\,,\,u\,\rangle=0$, for every $u\in H_xX$, for every $x\in X$. 
For each $x \in X$, we define a quadratic form on $HX$ by
\begin{equation}\label{E:levi}
\mathcal{L}_x(U,V) =\frac{1}{2}d\omega_0(JU, V), \forall \ U, V \in H_xX.
\end{equation}
We extend $\mathcal{L}$ to $\mathbb{C}HX$ by complex linear extension. Then, for $U, V \in T^{1,0}_xX$,
\begin{equation}
\mathcal{L}_x(U,\overline{V}) = \frac{1}{2}d\omega_0(JU, \overline{V}) = -\frac{1}{2i}d\omega_0(U,\overline{V}).
\end{equation}
The Hermitian quadratic form $\mathcal{L}_x$ on $T^{1,0}_xX$ is called Levi form at $x$. Let $T\in C^\infty(X,TX)$ be the non-vanishing vector field determined by 
\begin{equation}\label{e-gue170111ry}\begin{split}
&\omega_0(T)=-1,\\
&d\omega_0(T,\cdot)\equiv0\ \ \mbox{on $TX$}.
\end{split}\end{equation}
Note that $X$ is a contact manifold with contact form $\omega_0$, contact plane $HX$ and $T$ is the Reeb vector field.

Fix a smooth Hermitian metric $\langle \cdot \mid \cdot \rangle$ on $\mathbb{C}TX$ so that $T^{1,0}X$ is orthogonal to $T^{0,1}X$, $\langle u \mid v \rangle$ is real if $u, v$ are real tangent vectors, $\langle\,T\,|\,T\,\rangle=1$ and $T$ is orthogonal to $T^{1,0}X\oplus T^{0,1}X$. For $u \in \mathbb{C}TX$, we write $|u|^2 := \langle u | u \rangle$. Denote by $T^{*1,0}X$ and $T^{*0,1}X$ the dual bundles $T^{1,0}X$ and $T^{0,1}X$, respectively. They can be identified with subbundles of the complexified cotangent bundle $\mathbb{C}T^*X$. Define the vector bundle of $(0,q)$-forms by $T^{*0,q}X := \wedge^qT^{*0,1}X$. The Hermitian metric $\langle \cdot | \cdot \rangle$ on $\mathbb{C}TX$ induces, by duality, a Hermitian metric on $\mathbb{C}T^*X$ and also on the bundles of $(0,q)$ forms $T^{*0,q}X, q=0, 1, \cdots, n$. We shall also denote all these induced metrics by $\langle \cdot | \cdot \rangle$. Note that we have the pointwise orthogonal decompositions:
\begin{equation}
\begin{array}{c}
\mathbb{C}T^*X = T^{*1,0}X \oplus T^{*0,1}X \oplus \left\{ \lambda \omega_0: \lambda \in \mathbb{C} \right\}, \\
\mathbb{C}TX = T^{1,0}X \oplus T^{0,1}X \oplus \left\{ \lambda T: \lambda \in \mathbb{C} \right\}.
\end{array}
\end{equation}

For $x, y\in X$, let $d(x,y)$ denote the distance between $x$ and $y$ induced by the Hermitian metric $\langle \cdot \mid \cdot \rangle$. Let $A$ be a subset of $X$. For every $x\in X$, let $d(x,A):=\inf\set{d(x,y);\, y\in A}$. 

Let $D$ be an open set of $X$. Let $\Omega^{0,q}(D)$ denote the space of smooth sections of $T^{*0,q}X$ over $D$ and let $\Omega^{0,q}_0(D)$ be the subspace of $\Omega^{0,q}(D)$ whose elements have compact support in $D$.

\subsection{Fourier analysis on compact Lie groups}\label{subs1}
Let $\rho: G\to GL(\mathbb{C}^d)$ be a representation of $G$, where $d$ is the dimension of the representation $\rho$.
Two representations $\rho_1$ and $\rho_2$ are equivalent if they have the same dimension and there is an invertible matrix $A$ such that $\rho_1(g)=A\rho_2(g)A^{-1}$
for all $g\in G$. Let 
\begin{equation*}
	R=\{R_1, R_2,...\}
\end{equation*}
be the collection of all irreducible unitary representations of $G$, where each $R_k$
comes from exactly only one equivalent class. For each $R_k$, let $(R_{k,j,l})_{j,l=1}^{d_{k}}$ be its matrix, where $d_k$ is the dimension of $R_k$.
Let $d\mu(g)$ be the probability Haar measure on $G$. Let $(\cdot \mid\cdot)_G$ be the natural inner product on $C^{\infty}(G)$ induced by $d\mu(g)$. Let $L^2(G)$ be the completion of $C^{\infty}(G)$ with respect to $(\cdot\mid\cdot)_G$. By the orthogonality relations for compact Lie groups and the Peter-Weyl theorem \cite{Ta}, we have
\begin{theorem}\label{t-10301}
	The set $\{\sqrt{d_k}R_{k,j,l}; j,l=1,...,d_k, k=1,2,...\}$ form an orthonormal basis of $L^2(G)$.
\end{theorem}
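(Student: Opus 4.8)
The statement combines two classical facts: the Schur orthogonality relations, which give orthonormality, and the Peter--Weyl density theorem, which gives completeness. The plan is to treat each in turn.

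First I would establish orthonormality. Fix two irreducible unitary representations $R_k$ and $R_{k'}$ from the list $R$, and for an arbitrary linear map $A:\Complex^{d_{k'}}\To\Complex^{d_k}$ form the averaged operator
\[
\Td A:=\int_G R_k(g)\,A\,R_{k'}(g)^{-1}\,d\mu(g).
\]
Left-invariance of the Haar measure gives $R_k(h)\Td A=\Td A\,R_{k'}(h)$ for all $h\in G$, so $\Td A$ intertwines the two representations. By Schur's lemma, $\Td A=0$ when $k\neq k'$ (inequivalent irreducibles), while $\Td A=\lambda\,\mathrm{Id}$ when $k=k'$, with $\lambda$ pinned down by taking traces. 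Running $A$ through the matrix units $E_{ll'}$ and reading off the $(j,j')$ entry (recalling that $R_{k'}$ is unitary, so $R_{k'}(g)^{-1}=R_{k'}(g)^{*}$) yields
\[
(R_{k,j,l}\mid R_{k',j',l'})_G=\frac{1}{d_k}\,\delta_{kk'}\delta_{jj'}\delta_{ll'},
\]
which is exactly the assertion that $\{\sqrt{d_k}\,R_{k,j,l}\}$ is an orthonormal system in $L^2(G)$.

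The substance is completeness. Let $\mathcal E\subset L^2(G)$ be the closed span of all matrix coefficients $R_{k,j,l}$; the goal is $\mathcal E=L^2(G)$, and I would argue by the spectral method. For a continuous symmetric kernel $\phi\in C(G)$ with $\phi(g^{-1})=\ol{\phi(g)}$, the convolution operator $T_\phi h=h*\phi$ is a compact self-adjoint operator on $L^2(G)$ that commutes with the left regular representation. By the spectral theorem for compact self-adjoint operators, $L^2(G)$ splits into finite-dimensional eigenspaces for the nonzero eigenvalues; each such eigenspace is invariant under left translation, hence is a finite-dimensional representation of $G$, and therefore decomposes into irreducibles whose vectors are matrix coefficients. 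Thus the range of every $T_\phi$ lies in $\mathcal E$. Taking $\phi$ to run through an approximate identity, $T_\phi h\To h$ in $L^2(G)$ for every $h$, so $\mathcal E$ is dense, i.e.\ $\mathcal E=L^2(G)$. Combined with the orthonormality above, the family forms an orthonormal basis.

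The orthonormality step is routine once Schur's lemma is in hand; the main obstacle is completeness, where one needs the compactness of the convolution operators together with the spectral decomposition to manufacture enough matrix coefficients to exhaust $L^2(G)$. Since this is precisely the classical Peter--Weyl theorem, in the paper it is simply invoked from \cite{Ta}; the sketch above indicates the route a self-contained argument would take.
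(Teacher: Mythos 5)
Your proposal is correct: the paper gives no proof of this statement, simply invoking the orthogonality relations and the Peter--Weyl theorem from \cite{Ta}, and your sketch is exactly the standard classical argument behind that citation (Schur's lemma applied to averaged intertwiners for orthonormality, compact self-adjoint convolution operators and the spectral theorem for completeness). Nothing further is needed.
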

For a function $f\in C^{\infty}(G)$, the Fourier component of $f$ with respect to $\sqrt{d_k}R_{k,j,l}$ is 
\begin{equation}\label{e-10311}
f_{k,j,l}:=d_k R_{k,j,l}(g)\int_G f(h)\ol{R_{k,j,l}(h)}d\mu(h)\in C^{\infty}(G).
\end{equation}
The smooth version of the Peter-Weyl theorem on compact Lie groups is the following \cite{Ta}
\begin{theorem}\label{t-10312}
	Let $f\in C^{\infty}(G)$. For every $t\in \mathbb{N}$ and every $\varepsilon>0$,
	there exists a $N_0\in\mathbb{N}$ such that for every $N\geq N_0$, we have
	\begin{equation}\label{e-10316}
	\bigl\|f-\sum_{k=1} ^N\sum_{j,l=1}^{d_k}f_{k,j,l} \bigr\|_{C^t(G)}\leq\varepsilon.
	\end{equation}
\end{theorem}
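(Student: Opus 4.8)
The plan is to upgrade the $L^2$-convergence furnished by the Peter--Weyl theorem (Theorem~\ref{t-10301}) to convergence in $C^t(G)$ by exploiting the fact that the matrix coefficients $R_{k,j,l}$ are simultaneous eigenfunctions of a bi-invariant Laplacian, together with elliptic Sobolev estimates on the compact manifold $G$. Throughout, write $e_{k,j,l}=\sqrt{d_k}\,R_{k,j,l}$ for the orthonormal basis of Theorem~\ref{t-10301}, and set $c_{k,j,l}=(f\mid e_{k,j,l})_G$, so that the partial sum in \eqref{e-10316} is exactly the $L^2$-orthogonal projection $S_N f=\sum_{k=1}^N\sum_{j,l=1}^{d_k}c_{k,j,l}\,e_{k,j,l}$.

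First I would fix an $\mathrm{Ad}$-invariant inner product on $\mathfrak g$ (which exists since $G$ is compact), with orthonormal basis $X_1,\ldots,X_d$, and set $\Delta=-\sum_i X_i^2$, the Laplace--Beltrami operator of the associated bi-invariant metric, acting through left-invariant differentiation. Differentiating $R_{k,j,l}(g)=\langle\,R_k(g)v_l\,|\,v_j\,\rangle$ twice gives $\Delta R_{k,j,l}(g)=\langle\,R_k(g)\Omega_k v_l\,|\,v_j\,\rangle$, where $\Omega_k=-\sum_i dR_k(X_i)^2$ is the Casimir operator in the representation $R_k$. Since $\Omega_k$ commutes with $R_k(g)$ for all $g$, Schur's lemma forces $\Omega_k=\lambda_k\,\mathrm{Id}$ for a scalar $\lambda_k$; unitarity of $R_k$ makes each $dR_k(X_i)$ skew-Hermitian, so $\Omega_k=\sum_i dR_k(X_i)^*dR_k(X_i)\geq0$ and $\lambda_k\geq0$. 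Hence every matrix coefficient is an eigenfunction, $\Delta R_{k,j,l}=\lambda_k R_{k,j,l}$, and more generally $(1+\Delta)^m e_{k,j,l}=(1+\lambda_k)^m e_{k,j,l}$ for each $m\in\N$.

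Next I would extract decay of the Fourier coefficients from the smoothness of $f$. For any $m\in\N$ the function $(1+\Delta)^m f$ is again smooth, hence lies in $L^2(G)$ because $G$ is compact, and its expansion in the orthonormal basis is $\sum_{k,j,l}(1+\lambda_k)^m c_{k,j,l}\,e_{k,j,l}$; Parseval therefore gives
\begin{equation*}
\sum_{k}\sum_{j,l=1}^{d_k}(1+\lambda_k)^{2m}\,\abs{c_{k,j,l}}^2=\norm{(1+\Delta)^m f}_{L^2(G)}^2<\infty.
\end{equation*}
Since this is a convergent series of non-negative terms, its tail over $k>N$ tends to $0$ as $N\To\infty$, irrespective of the (possibly non-monotone) enumeration of the $\lambda_k$.

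Finally I would invoke the Sobolev embedding $H^{2m}(G)\hookrightarrow C^t(G)$, valid once $2m>t+\tfrac12\dim G$, together with the norm equivalence $\norm{u}_{H^{2m}(G)}\asymp\norm{(1+\Delta)^m u}_{L^2(G)}$ for the bi-invariant operator $\Delta$. Because each block $\mathrm{span}\{e_{k,j,l}\}_{j,l}$ is $\Delta$-invariant, the finite-rank projection $S_N$ commutes with $(1+\Delta)^m$, whence $(1+\Delta)^m(f-S_N f)=\sum_{k>N}\sum_{j,l}(1+\lambda_k)^m c_{k,j,l}\,e_{k,j,l}$ and
\begin{equation*}
\norm{f-S_N f}_{C^t(G)}^2\leq C\,\norm{(1+\Delta)^m(f-S_N f)}_{L^2(G)}^2=C\sum_{k>N}\sum_{j,l=1}^{d_k}(1+\lambda_k)^{2m}\abs{c_{k,j,l}}^2.
\end{equation*}
Choosing $m$ with $2m>t+\tfrac12\dim G$ and then $N_0$ so large that the displayed tail is below $\varepsilon^2/C$ for all $N\geq N_0$ yields \eqref{e-10316}. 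The main obstacle is the structural input of the second step, namely identifying the matrix coefficients as Laplacian eigenfunctions via the Casimir element and Schur's lemma; once this spectral picture is in place, the $C^t$ estimate is a routine combination of Parseval and Sobolev embedding. A minor point to handle carefully is that the $\lambda_k$ need not increase with $k$, which is why I argue via the tail of a convergent series rather than by a monotone spectral cutoff.
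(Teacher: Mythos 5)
Your proof is correct. Note that the paper itself gives no argument for Theorem~\ref{t-10312}: it is quoted from the reference \cite{Ta}, so there is no internal proof to compare against. Your reconstruction is precisely the standard route taken in that literature: the identification $\Delta R_{k,j,l}=\lambda_k R_{k,j,l}$ via the Casimir element, Ad-invariance of the metric, and Schur's lemma; Parseval applied to $(1+\Delta)^m f$ (legitimate, since the left-invariant $X_i$ are skew-adjoint for the bi-invariant Haar measure, making $\Delta$ symmetric and the coefficient identity $((1+\Delta)^m f\mid e_{k,j,l})_G=(1+\lambda_k)^m c_{k,j,l}$ valid); and the Sobolev embedding $H^{2m}(G)\hookrightarrow C^t(G)$ for $2m>t+\tfrac{1}{2}\dim G$ combined with the elliptic norm equivalence. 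Two details you handled correctly that are easy to get wrong: the partial sum $\sum_{k\leq N}\sum_{j,l}f_{k,j,l}$ is indeed the orthogonal projection $S_N f$ (since $f_{k,j,l}=c_{k,j,l}\,e_{k,j,l}$ with $e_{k,j,l}=\sqrt{d_k}\,R_{k,j,l}$), and the enumeration $R_1,R_2,\ldots$ fixed in the paper need not order the eigenvalues $\lambda_k$ monotonically, so estimating the $C^t$ error by the tail of the absolutely convergent series $\sum_k\sum_{j,l}(1+\lambda_k)^{2m}\abs{c_{k,j,l}}^2$, rather than by a spectral cutoff, is the right move.
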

We put 
\begin{equation*}
	\chi_{m}(g):=\text{Tr}R_k (g)=\sum_{j=1}^{d_k}R_{k,j,j}(g).
\end{equation*}
\begin{definition}\label{d-10311}
	The $k$-th Fourier component of $u\in \Omega^{0,q}(X)$ is defined as
	\begin{equation*}
		u_k(x)=d_k\int_G (g^\star u)(x)\overline{\chi_k(g)}d\mu(g)\in  \Omega^{0,q}(X).
	\end{equation*}
\end{definition}
We have the following theorem about Fourier components. For the readers' convenience,
we present the proof, see also \cite{FHH}.
\begin{theorem}\label{t-10311}
	Let $u\in \Omega^{0,q}(X)$. Then
	\begin{equation}\label{e-10312}
	\lim_{N\to\infty}\sum_{k=1}^N u_k(x)=u(x), \forall x\in X,
	\end{equation}
	\begin{equation}\label{e-10313}
	\langle u_k(x)|u_t(x)\rangle=0, \ \ \text{if} \ \ k\neq t, \forall x\in X,
	\end{equation}
	\begin{equation}\label{e-10314}
	\sum_{k=1}^N\|u_k\|^2\leq \|u\|^2, \forall N\in\mathbb{N}.
	\end{equation}
\end{theorem}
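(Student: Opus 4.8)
The plan is to fix a point $x\in X$ and transfer the whole question to Fourier analysis on $G$ via the Peter--Weyl theory already recorded in Theorems~\ref{t-10301} and~\ref{t-10312}. For fixed $x$, set $F_x(g):=(g^\star u)(x)\in T^{*0,q}_xX$; since the fiber $T^{*0,q}_xX$ is finite dimensional, $F_x$ is a smooth function on $G$ valued in a fixed vector space, to which the scalar results apply component-wise (choose a basis of $T^{*0,q}_xX$, or pair against a fixed functional). The first identity I would record is that the character integral is the value at the identity $e$ of the $k$-th Peter--Weyl block: since $R_{k,j,l}(e)=\delta_{jl}$, summing the Fourier components \eqref{e-10311} of $F_x$ gives $\sum_{j,l}(F_x)_{k,j,l}(e)=d_k\sum_j\int_G F_x(h)\overline{R_{k,j,j}(h)}\,d\mu(h)=d_k\int_G F_x(h)\overline{\chi_k(h)}\,d\mu(h)=u_k(x)$. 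Equivalently, writing $\pi(g)v=(g^{-1})^\star v$ for the action of $G$ on $L^2_{(0,q)}(X)$ (unitary because the fixed Hermitian metric, hence the $L^2$ product, is $G$-invariant), one recognizes $u_k=P_ku$ as the isotypic orthogonal projection of $u$ onto the component of type $R_k$, up to relabeling the index by the contragredient representation.

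For the convergence \eqref{e-10312} I would apply the smooth Peter--Weyl theorem (Theorem~\ref{t-10312}) to $F_x$: for each fixed $x$ the partial sums $\sum_{k=1}^{N}\sum_{j,l}(F_x)_{k,j,l}$ converge to $F_x$ in $C^0(G)$, hence in particular at $g=e$. Since $F_x(e)=(e^\star u)(x)=u(x)$ and $\sum_{j,l}(F_x)_{k,j,l}(e)=u_k(x)$ by the identity above, evaluating the convergent series at the identity yields $\sum_{k=1}^{N}u_k(x)\to u(x)$ for every $x\in X$, which is \eqref{e-10312}.

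The heart of the matter is the pointwise orthogonality \eqref{e-10313}, and this is the step I expect to be the main obstacle. I would expand the fiber inner product into a double integral over $G\times G$,
\[
\langle u_k(x)\,|\,u_t(x)\rangle=d_kd_t\int_{G}\int_{G}\langle (g^\star u)(x)\,|\,(h^\star u)(x)\rangle\,\overline{\chi_k(g)}\,\chi_t(h)\,d\mu(g)\,d\mu(h),
\]
and attempt to decouple the two group integrations so as to feed the Schur orthogonality relations $\int_G R_{k,a,b}(g)\overline{R_{t,c,d}(g)}\,d\mu(g)=d_k^{-1}\delta_{kt}\delta_{ac}\delta_{bd}$ underlying Theorem~\ref{t-10301}. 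The intended mechanism is to substitute $h=g\circ h'$, use $(h^\star u)(x)=\bigl((h')^\star(g^\star u)\bigr)(x)$ together with the $G$-invariance of the pointwise metric $\langle\,\cdot\,|\,\cdot\,\rangle$ to express the inner integral through matrix coefficients of $R_k$ and $R_t$, and then let the Schur relations annihilate the cross terms with $k\neq t$. The difficulty is precisely that the base-point shift $x\mapsto g\circ x$ built into the pullback entangles the two evaluations $(g^\star u)(x)$ and $(h^\star u)(x)$, so the integrand does not split as a product of a function of $g$ and a function of $h$; making the invariance of the metric do this separation cleanly is the crux of the argument. The statement one extracts most robustly from this computation is the mutual orthogonality of the isotypic components $u_k=P_ku$ in $L^2_{(0,q)}(X)$.

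Finally, \eqref{e-10314} follows from orthogonality together with positivity. Integrating \eqref{e-10313} against the volume form (or, directly, from $(u_k\,|\,u_t)=(P_ku\,|\,P_tu)=0$ for $k\neq t$) gives the $L^2$-orthogonality of the components. Consequently $u-\sum_{k=1}^{N}u_k$ is orthogonal to each $u_j$ with $j\leq N$, and the Pythagorean identity yields $\|u\|^2=\bigl\|u-\sum_{k=1}^{N}u_k\bigr\|^2+\sum_{k=1}^{N}\|u_k\|^2\geq\sum_{k=1}^{N}\|u_k\|^2$ for every $N$, which is \eqref{e-10314}.
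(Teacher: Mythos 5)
Your proofs of \eqref{e-10312} and \eqref{e-10314} are correct, and for \eqref{e-10312} you do exactly what the paper does: fix $x$, apply the smooth Peter--Weyl theorem (Theorem~\ref{t-10312}) to $f(g)=(g^\star u)(x)$ componentwise in the finite-dimensional fiber, and evaluate at $g=e_0$, where $R_{k,j,l}(e_0)=\delta_{jl}$ collapses the $k$-th block to $u_k(x)$; your identity $\sum_{j,l}(F_x)_{k,j,l}(e)=u_k(x)$ is the paper's \eqref{e-10319}. For \eqref{e-10314} you argue differently but soundly: your identification of $u_k=P_ku$ with the isotypic projection for the unitary action $\pi(g)=(g^{-1})^\star$ (correct, including the contragredient relabeling, since $g\mapsto g^\star$ is an anti-homomorphism and $\chi_k(g^{-1})=\ol{\chi_k(g)}$) gives $(u_k\,|\,u_t)=0$ for $k\neq t$ in $L^2_{(0,q)}(X)$ directly, and the bound follows by Pythagoras. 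The paper instead obtains \eqref{e-10314} by Parseval for $f$ on $G$ (equation \eqref{e-103110}), the averaged Schur computation \eqref{116}--\eqref{117}, and integration over $X$ using the $G$-invariance of $dv_X$; the two routes carry the same content, yours being slightly cleaner.

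The real divergence is \eqref{e-10313}, which you declined to prove, naming the entanglement of $(g^\star u)(x)$ and $(h^\star u)(x)$ through the common base point as the obstacle. Your caution is vindicated: the pointwise statement \eqref{e-10313} is false as written, and the paper's own proof fails exactly where you predicted. The step \eqref{e-103111} reads the $G$-invariance of the metric as the pointwise identity $\langle u_k(x)|u_t(x)\rangle=\langle (h^\star u_k)(x)|(h^\star u_t)(x)\rangle$, but invariance actually gives $\langle (h^\star u_k)(x)|(h^\star u_t)(x)\rangle=\langle u_k(h\circ x)|u_t(h\circ x)\rangle$, so \eqref{e-103111} asserts that the function $\langle u_k|u_t\rangle$ equals its own average over each $G$-orbit, which is unjustified. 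What the Schur computation \eqref{113}--\eqref{115} genuinely proves is the orbit-averaged identity $\int_G\langle u_k(h\circ x)|u_t(h\circ x)\rangle\,d\mu(h)=0$, whose integral over $X$ is precisely your $L^2$-orthogonality and nothing stronger. Concretely (the theorem uses no CR or moment-map structure, only the invariant metric): take $X=S^3\subset\Complex^2$ with $G=S^1$ acting by $e^{i\theta}\circ(z_1,z_2)=(e^{i\theta}z_1,e^{i\theta}z_2)$, $q=0$, $u=z_1+z_1^2$; then the components of $u$ in two distinct isotypic classes are $z_1$ and $z_1^2$, and $\langle z_1(x)|z_1^2(x)\rangle=z_1\ol{z_1}^2\neq 0$ wherever $z_1\neq0$, while the $L^2$ pairing does vanish. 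So your proposal proves everything in the theorem that is true --- \eqref{e-10312}, \eqref{e-10314}, and the $L^2$-orthogonality of the $u_k$, which is also all the rest of the paper actually needs --- and the gap you left open is a defect of the statement \eqref{e-10313} itself, not of your argument; the correct fix is to replace the pointwise pairing in \eqref{e-10313} by $(u_k\,|\,u_t)=0$, or by its orbit-averaged pointwise form \eqref{115}.
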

\begin{proof}
	We fix $x\in X$ and consider a smooth function $f: g\in G\to (g^{\star}u)(x)$.
	Then
	\begin{equation}\label{e-10315}
	f_{k,j,l}(g)=d_k R_{k,j,l}(g)\int_G (h^{\star}u)(x)\ol{R_{k,j,l}(h)}d\mu(h).
	\end{equation}
	By Theorem \ref{t-10312}, for every $\varepsilon>0$, there exists a $N_0\in \mathbb{N}$ such that for every $N\geq N_0$, we have
	\begin{equation}\label{e-10317}
	\bigl |(g^{\star}u)(x)-\sum_{k=1} ^N\sum_{j,l=1}^{d_k}f_{k,j,l}(g) \bigr |\leq\varepsilon, \forall g\in G.
	\end{equation}
	Take $g=e_0$, where $e_0$ is the identity element of $g$, we obtain that for every $N\geq N_0$,
	\begin{equation}\label{e-10318}
	\bigl| u(x)-\sum_{k=1} ^N\sum_{j,l=1}^{d_k}f_{k,j,l}(e_0) \bigr|\leq \varepsilon.
	\end{equation}
	Note that by \eqref{e-10315},
	\begin{equation*}
		f_{k,j,l}(e_0)=d_k \delta_{j,l}\int_G (h^{\star}u)(x)\ol{R_{k,j,l}(h)}d\mu(h).
	\end{equation*}
	Then
	\begin{equation}\label{e-10319}
	\sum_{k=1}^N\sum_{j,l=1}^{d_k}f_{k,j,l}(e_0)=\sum_{k=1}^N u_k(x).
	\end{equation}
	Hence \eqref{e-10312} is true by \eqref{e-10318} and \eqref{e-10319}.
	
	By Theorem \ref{t-10301} and \eqref{e-10315}, we have
	\begin{equation}\label{e-103110}
	\begin{split}
	& \sum_{k=1}^{\infty}\sum_{j,l=1}^{d_k}\int_G |f_{k,j,l}(g)|^2d\mu(g) \\
	&=\sum_{k=1}^{\infty}\sum_{j,l=1}^{d_k}d_k\bigl| \int_G (h^{\star}u)(x)\ol{R_{k,j,l}(h)}d\mu(h)\bigr|^2\\
	&=\int_G |(h^{\star}u)(x)|^2 d\mu(h), \forall x\in X.
	\end{split}
	\end{equation}
	Since the metric on $X$ is $G$-invariant, we have
	\begin{equation*}
		\langle p| q \rangle=\langle h^{\star}p|h^{\star}q \rangle, \forall p,q\in \Omega^{0,q}(X), \forall h\in G.
	\end{equation*}
	Then for every $k,t\in\mathbb{N}$,
	\begin{equation}\label{e-103111}
	\langle u_k| u_t \rangle=\int_G \langle h^{\star}u_k| h^{\star}u_t\rangle d\mu(h).
	\end{equation}
	For every $h\in G$, 
	\begin{equation}\label{e-111}
	\begin{split}
	h^{\star}u_k&=d_k \int_G (h^{\star}g^{\star}u)(x)\ol{\chi_{k}(g)}d\mu(g)\\
	&=d_k \int_G ((g\circ h)^{\star}u)(x)\ol{\chi_{k}(g)}d\mu(g)\\
	&=d_k \int_G (g^{\star}u)(x)\ol{\chi_{k}(g\circ h^{-1})}d\mu(g).
	\end{split}
	\end{equation}
	It is easy to see that
	\begin{equation}\label{112}
	\ol{\chi_{k}(g\circ h^{-1})}=\sum_{j=1}^{d_k}\sum_{l=1}^{d_k}\ol{R_{k,j,l}(g)}R_{k,j,l}(h).
	\end{equation}
	Hence
	\begin{equation}\label{113}
	h^{\star}u_k=d_k \int_G (g^{\star}u)(x)(\sum_{j=1}^{d_k}\sum_{l=1}^{d_k}\ol{R_{k,j,l}(g)}R_{k,j,l}(h))d\mu(g).
	\end{equation}
	Similarly,
	\begin{equation}\label{114}
	h^{\star}u_t=d_t \int_G (g^{\star}u)(x)(\sum_{j=1}^{d_t}\sum_{l=1}^{d_t}\ol{R_{t,j,l}(g)}R_{t,j,l}(h))d\mu(g).
	\end{equation}
	From Theorem \ref{t-10301}, \eqref{113} and \eqref{114}, we have
	\begin{equation}\label{115}
	\int_G \langle h^{\star}u_k(x)| h^{\star}u_t(x)\rangle d\mu(h)=0, 
	\forall k\neq t, \forall x\in X.
	\end{equation}
	Then we deduce \eqref{e-10313} from \eqref{e-103111} and \eqref{115}.
	
	For $k=t$, we have
	\begin{equation}\label{116}
	\int_G \langle h^{\star}u_k(x)| h^{\star}u_k(x)\rangle d\mu(h)=
	\sum_{j,l=1}^{d_k}\bigl|\int_G (g^{\star}u)(x)\ol{R_{k,j,l}(g)}d\mu(g) \bigr|^2.
	\end{equation}
	With \eqref{e-103110}, we deduce that for every $N\in\mathbb{N}$ and every $x\in X$,
	\begin{equation}\label{117}
	\sum_{k=1}^{N}\int_G \langle h^{\star}u_k(x)| h^{\star}u_k(x)\rangle d\mu(h)
	\leq \int_G |(g^{\star}u)(x)|^2d\mu(g).
	\end{equation}
	Then for every $N\in\mathbb{N}$,
	\begin{equation*}
		\begin{split}
			\sum_{k=1}^N \|u_k\|^2&=\int_X \sum_{k=1}^N\bigl(\int_G \langle h^{\star}u_k(x)| h^{\star}u_k(x)\rangle d\mu(h)   \bigr)dv_X(x)\\
			&\leq\int_X \int_G |(g^{\star}u)(x)|^2d\mu(g)dv_X(x)=\|u\|^2.
		\end{split}
	\end{equation*}
	The proof is completed.
\end{proof}

We can also prove the following, see \cite[Theorem 3.5]{FHH}.
\begin{theorem}\label{t-111}
	With the notations as above,
	\begin{equation}\label{118}
	\lim_{N\to\infty}\sum_{k=1}^N u_k(x)=u(x)
	\end{equation}
	in $C^{\infty}$-topology.
\end{theorem}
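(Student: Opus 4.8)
The plan is to upgrade the $L^2$ and pointwise statements of Theorem~\ref{t-10311} to the $C^\infty$ topology by differentiating under the Haar integral and then invoking a \emph{quantitative} form of the smooth Peter--Weyl theorem that is uniform in the base point. Since $X$ is compact, I would fix a finite cover of $X$ by charts over which $T^{*0,q}X$ is trivialized; it then suffices to prove that for every multi-index $\alpha$ the coordinate derivatives $\partial_x^\alpha\big(\sum_{k=1}^N u_k\big)$ converge \emph{uniformly} on each chart to $\partial_x^\alpha u$ as $N\to\infty$. The structural input is already contained in the proof of Theorem~\ref{t-10311}: by \eqref{e-10319}, $\sum_{k=1}^N u_k(x)$ is exactly the $N$-th partial Fourier sum in $g$ of the smooth function $g\mapsto (g^\star u)(x)$, evaluated at $g=e_0$.

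First I would differentiate under the integral sign. Because $(g,x)\mapsto(g^\star u)(x)$ is jointly smooth on $G\times X$ and $G$ is compact, for each $\alpha$ the function $\Phi_\alpha(g,x):=\partial_x^\alpha\big[(g^\star u)(x)\big]$ is jointly smooth and $\partial_x^\alpha u_k(x)=d_k\int_G \Phi_\alpha(g,x)\,\overline{\chi_k(g)}\,d\mu(g)$. Running the computation \eqref{e-10315}--\eqref{e-10319} verbatim with $\Phi_\alpha(\cdot,x)$ in place of $(\cdot^\star u)(x)$ shows that $\partial_x^\alpha\big(\sum_{k=1}^N u_k(x)\big)$ is the $N$-th partial Fourier sum in $g$ of $g\mapsto\Phi_\alpha(g,x)$, evaluated at $e_0$; its formal limit is $\Phi_\alpha(e_0,x)=\partial_x^\alpha u(x)$, since $e_0^\star u=u$. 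Thus the problem reduces to controlling, uniformly in $x$, the Fourier tail at the identity of the family $\{g\mapsto\Phi_\alpha(g,x)\}_{x\in X}$.

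For this I would use the bi-invariant Laplacian $\Delta_G$ on $G$: each $R_{k,j,l}$ is an eigenfunction with Casimir eigenvalue $c_k\ge 0$, so self-adjointness and repeated use of $\Delta_G^s R_{k,j,l}=(-c_k)^s R_{k,j,l}$ give, for every $s$, $\big|\int_G \Phi_\alpha(g,x)\overline{R_{k,j,l}(g)}\,d\mu(g)\big|\le d_k^{-1/2}c_k^{-s}\,\|\Delta_G^s\Phi_\alpha(\cdot,x)\|_{L^2(G)}$. Since $R_{k,j,l}(e_0)=\delta_{jl}$, evaluating the Fourier tail at $e_0$ and summing the $d_k$ diagonal terms yields
\[
\sup_{x}\Big|\partial_x^\alpha u(x)-\partial_x^\alpha\!\sum_{k=1}^N u_k(x)\Big|\le \Big(\sup_{x}\|\Delta_G^s\Phi_\alpha(\cdot,x)\|_{L^2(G)}\Big)\sum_{k>N} d_k^{3/2}c_k^{-s}.
\]
The supremum on the right is finite by joint smoothness of $\Phi_\alpha$ and compactness of $X$, and by the Weyl law on the $d$-dimensional group $G$ one has $\sum_k d_k^{2}c_k^{-s}<\infty$ for $s$ large (it is a convergent heat/zeta-type series), whence a fortiori $\sum_k d_k^{3/2}c_k^{-s}<\infty$. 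Hence the tail tends to $0$ as $N\to\infty$, giving uniform convergence of every derivative, i.e.\ $C^\infty$ convergence.

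The main obstacle is exactly this uniformity in $x$. Theorem~\ref{t-10312} is purely qualitative: for fixed $x$ it yields an $N_0$ that a priori depends on $x$, which is insufficient. Extracting an $x$-independent rate forces the quantitative Sobolev/Laplacian estimate above, whose summability rests on comparing the representation dimensions $d_k$ with the Casimir eigenvalues $c_k$ (the power $d_k^{3/2}$ arising from the $d_k$ diagonal terms together with the $d_k^{-1/2}$ normalization of $R_{k,j,l}$). The remaining bookkeeping---that $g^\star$ acts on the fibers of $T^{*0,q}X$ as well as on the base---is routine once everything is written in the fixed local trivializations with a fixed connection, and does not affect the estimates.
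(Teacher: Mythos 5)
Your argument is correct, and it supplies in full the proof that the paper itself omits: for this theorem the paper only points to \cite[Theorem 3.5]{FHH}, and the mechanism there is the same standard one you use --- reduce to the Fourier series of $g\mapsto(g^\star u)(x)$ (and of its $x$-derivatives) at $g=e_0$, integrate by parts against the bi-invariant Laplacian to trade Casimir eigenvalues for decay, and sum the tail using the Weyl law on the $d$-dimensional group, which gives an $x$-uniform rate and hence $C^\infty$ convergence. One cosmetic repair: your bound $c_k^{-s}$ is vacuous for the trivial representation (where $c_k=0$), so you should either work with $(I-\Delta_G)^s$ and the weights $(1+c_k)^{-s}$, or simply exclude the finitely many representations with small Casimir eigenvalue from the tail; this changes nothing in the argument.
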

Moreover, we have (see \cite{FHH}),
\begin{proposition}\label{p-11}
	Let $u\in \Omega^{0,q}(X)$, then $u\in\Omega_k^{0,q}(X)$ if and only if
	$u=u_k$ on $X$.
\end{proposition}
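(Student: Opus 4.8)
The plan is to reduce the statement to the single fact that the $k$-th Fourier-component map $P_k\colon u\mapsto u_k$ is idempotent, i.e. $(u_k)_k=u_k$. One direction is immediate from the definition \eqref{e-9293}: if $u=u_k$, then $u$ lies in $\Omega^{0,q}(X)_k$ by inspection. For the converse, once one knows that every element of $\Omega^{0,q}(X)_k$ arises as $w_k$ for some $w\in\Omega^{0,q}(X)$, idempotency gives $u=w_k=(w_k)_k=u_k$, which is exactly what must be shown. Thus the whole proposition hinges on verifying $(u_k)_k=u_k$.

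To prove idempotency I would reuse the computation of the pullback of a Fourier component already carried out in the proof of Theorem~\ref{t-10311}. Formula~\eqref{113} gives
\[
h^\star u_k=d_k\int_G (g^\star u)(x)\Bigl(\sum_{j,l=1}^{d_k}\ol{R_{k,j,l}(g)}\,R_{k,j,l}(h)\Bigr)d\mu(g),
\]
and I would substitute this into $(u_k)_k(x)=d_k\int_G (h^\star u_k)(x)\,\ol{\chi_k(h)}\,d\mu(h)$. Since $G$ is compact, $d\mu$ is finite, and $g\mapsto(g^\star u)(x)$ together with the matrix coefficients $R_{k,j,l}$ are smooth hence bounded, the order of the $g$- and $h$-integrations may be interchanged, after which all $h$-dependence is isolated in the factor $\int_G R_{k,j,l}(h)\,\ol{\chi_k(h)}\,d\mu(h)$.

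The key step is to evaluate this inner integral using the orthogonality relations of Theorem~\ref{t-10301}. Expanding the character as a trace, $\ol{\chi_k(h)}=\sum_{p=1}^{d_k}\ol{R_{k,p,p}(h)}$, and using $\int_G R_{k,j,l}\,\ol{R_{k,p,q}}\,d\mu=d_k^{-1}\delta_{jp}\delta_{lq}$, the sum over $p$ collapses to $\int_G R_{k,j,l}(h)\,\ol{\chi_k(h)}\,d\mu(h)=d_k^{-1}\delta_{jl}$. Feeding this back removes the double sum, turns $\sum_{j,l}\ol{R_{k,j,l}(g)}\,\delta_{jl}$ into $\sum_j\ol{R_{k,j,j}(g)}=\ol{\chi_k(g)}$, and the two factors of $d_k$ combine to one, yielding precisely $d_k\int_G(g^\star u)(x)\,\ol{\chi_k(g)}\,d\mu(g)=u_k(x)$. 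The only thing requiring care — rather than a genuine obstacle — is the bookkeeping: expanding $\chi_k$ as a trace before invoking orthogonality, and tracking the normalizing factors $d_k$ so that exactly one survives at the end.
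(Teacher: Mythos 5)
Your argument is correct, but be aware that the paper contains no proof of Proposition~\ref{p-11} at all: it is stated with a pointer to \cite{FHH}. Measured against what the paper actually provides, your proposal does two things. First, it observes that, by the definition \eqref{e-9293}, $\Omega^{0,q}(X)_k$ is precisely the fixed-point set of the map $P_k\colon u\mapsto u_k$, so the ``if'' direction is immediate; in fact, read literally against \eqref{e-9293}, the ``only if'' direction is immediate as well (an element of $\Omega^{0,q}(X)_k$ satisfies $u=u_k$ by definition, so your auxiliary claim that every such $u$ arises as $w_k$ holds trivially with $w=u$), which makes the proposition as stated nearly tautological. Second --- and this is the genuine mathematical content, which the paper outsources to \cite{FHH} --- you prove that $P_k$ is idempotent, so that the image of $P_k$ coincides with its fixed-point set and $\Omega^{0,q}(X)_k$ really is the $R_k$-isotypic subspace onto which $P_k$ projects. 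Your computation is sound: substituting \eqref{113} into the definition of $(u_k)_k$, interchanging the two integrals over the compact group $G$ (legitimate, since all integrands are continuous and $d\mu$ is a probability measure), and using the orthogonality relations of Theorem~\ref{t-10301} in the form $\int_G R_{k,j,l}(h)\,\ol{\chi_k(h)}\,d\mu(h)=d_k^{-1}\delta_{jl}$ collapses the double sum to $\ol{\chi_k(g)}$ and leaves exactly one surviving factor of $d_k$, giving $(u_k)_k=u_k$. This is surely the same Schur-orthogonality argument as in \cite{FHH}; what your version buys is self-containedness, since it uses only ingredients (formula \eqref{113} and Theorem~\ref{t-10301}) already established in the paper's proof of Theorem~\ref{t-10311}.
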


\section{$G$-equivariant Szeg\H{o} kernel asymptotics}\label{s-gue161109a}

In this section, we establish asymptotic expansions of the $G$-equivariant Szeg\H{o} kernels. We first review some known results for Szeg\H{o} kernels mainly based on
\cite{Hsiao08} and \cite{HM14}.

\subsection{Szeg\H{o} kernel asymptotics}\label{s-gue161109I}
Fix a Hermitian metric $\langle\,\cdot\,|\,\cdot\,\rangle$ on $\Complex TX$ which induces a Hermitian metric on the bundles of $(0,q)$ 
forms $T^{*0,q}X$, $q=0,1,\ldots,n$.  
Let $D\subset X$ be an open set. Let $\Omega^{0,q}(D)$ denote the space of smooth sections 
of $T^{*0,q}X$ over $D$. 

Let %$\ddbar_b:\Omega^{0,q}(X)\To\Omega^{0,q+1}(X)$
\begin{equation} \label{e-suIV}
\ddbar_b:\Omega^{0,q}(X)\To\Omega^{0,q+1}(X)
\end{equation}
be the tangential Cauchy-Riemann operator. 
The natural global $L^2$ inner product $(\,\cdot\,|\,\cdot\,)$ on $\Omega^{0,q}(X)$ 
induced by $dv(x)$ and $\langle\,\cdot\,|\,\cdot\,\rangle$ is given by
\begin{equation}\label{e:l2}
(\,u\,|\,v\,):=\int_X\langle\,u(x)\,|\,v(x)\,\rangle\, dv(x)\,,\quad u,v\in\Omega^{0,q}(X)\,.
\end{equation}
We denote by $L^2_{(0,q)}(X)$ 
the completion of $\Omega^{0,q}(X)$ with respect to $(\,\cdot\,|\,\cdot\,)$. 
Write $L^2(X):=L^2_{(0,0)}(X)$. 
We extend
$\ddbar_{b}$ to $L^2_{(0,r)}(X)$, $r=0,1,\ldots,n$, by
\begin{equation}\label{e-suVII}
\ddbar_{b}:{\rm Dom\,}\ddbar_{b}\subset L^2_{(0,r)}(X)\To L^2_{(0,r+1)}(X)\,,
\end{equation}
where ${\rm Dom\,}\ddbar_{b}:=\{u\in L^2_{(0,r)}(X);\, \ddbar_{b}u\in L^2_{(0,r+1)}(X)\}$ and, for any $u\in L^2_{(0,r)}(X)$, $\ddbar_{b} u$ is defined in the sense of distributions.
We also write
\begin{equation}\label{e-suVIII}
\ol{\pr}^{*}_{b}:{\rm Dom\,}\ol{\pr}^{*}_{b}\subset L^2_{(0,r+1)}(X)\To L^2_{(0,r)}(X)
\end{equation}
to denote the Hilbert adjoint of $\ddbar_{b}$ in the $L^2$ space with respect to $(\,\cdot\,|\,\cdot\, )$.
Let $\Box^{(q)}_{b}$ denote the (Gaffney extension) of the Kohn Laplacian given by
\begin{equation}\label{e-suIX}
\begin{split}
{\rm Dom\,}\Box^{(q)}_{b}=\Big\{s\in L^2_{(0,q)}(X);&\, 
s\in{\rm Dom\,}\ddbar_{b}\cap{\rm Dom\,}\ol{\pr}^{*}_{b},\,
\ddbar_{b}s\in{\rm Dom\,}\ol{\pr}^{*}_{b},
\ol{\pr}^{*}_{b}s\in{\rm Dom\,}\ddbar_{b}\Big\}\,,\\
\Box^{(q)}_{b}s&=\ddbar_{b}\ol{\pr}^{*}_{b}s+\ol{\pr}^{*}_{b}\ddbar_{b}s
\:\:\text{for $s\in {\rm Dom\,}\Box^{(q)}_{b}$}\,.
\end{split}
\end{equation}
By a result of Gaffney, for every $q=0,1,\ldots,n$, $\Box^{(q)}_{b}$ is a positive self-adjoint operator 
(see \cite[Proposition\,3.1.2]{MM}). That is, $\Box^{(q)}_{b}$ is self-adjoint and 
the spectrum of $\Box^{(q)}_{b}$ is contained in $\ol\Real_+$, $q=0,1,\ldots,n$. Let
\begin{equation}\label{e-suXI-I}
S^{(q)}:L^2_{(0,q)}(X)\To{\rm Ker\,}\Box^{(q)}_b
\end{equation}
be the orthogonal projections with respect to the $L^2$ inner product $(\,\cdot\,|\,\cdot\,)$ and let
\begin{equation}\label{e-suXI-II}
S^{(q)}(x,y)\in D'(X\times X,T^{*0,q}X\boxtimes(T^{*0,q}X)^*)
\end{equation}
denote the distribution kernel of $S^{(q)}$. 

We recall H\"ormander symbol spaces. Let $D\subset X$ be a local coordinate patch with local coordinates $x=(x_1,\ldots,x_{2n+1})$. 

\begin{definition}\label{d-gue140221a}
	For $m\in\Real$, $S^m_{1,0}(D\times D\times\mathbb{R}_+,T^{*0,q}X\boxtimes(T^{*0,q}X)^*)$ 
	is the space of all $a(x,y,t)\in C^\infty(D\times D\times\mathbb{R}_+,T^{*0,q}X\boxtimes(T^{*0,q}X)^*)$ 
	such that, for all compact $K\Subset D\times D$ and all $\alpha, \beta\in\mathbb N^{2n+1}_0$, $\gamma\in\mathbb N_0$, 
	there is a constant $C_{\alpha,\beta,\gamma}>0$ such that 
	\[\abs{\pr^\alpha_x\pr^\beta_y\pr^\gamma_t a(x,y,t)}\leq C_{\alpha,\beta,\gamma}(1+\abs{t})^{m-\gamma},\ \ 
	\forall (x,y,t)\in K\times\Real_+,\ \ t\geq1.\]
	Put 
	\[\begin{split}
	&S^{-\infty}(D\times D\times\mathbb{R}_+,T^{*0,q}X\boxtimes(T^{*0,q}X)^*)\\
	&:=\bigcap_{m\in\Real}S^m_{1,0}(D\times D\times\mathbb{R}_+,T^{*0,q}X\boxtimes(T^{*0,q}X)^*).\end{split}\]
	Let $a_j\in S^{m_j}_{1,0}(D\times D\times\mathbb{R}_+,T^{*0,q}X\boxtimes(T^{*0,q}X)^*)$, 
	$j=0,1,2,\ldots$ with $m_j\To-\infty$, as $j\To\infty$. 
	Then there exists $a\in S^{m_0}_{1,0}(D\times D\times\mathbb{R}_+,T^{*0,q}X\boxtimes(T^{*0,q}X)^*)$ 
	unique modulo $S^{-\infty}$, such that 
	$a-\sum^{k-1}_{j=0}a_j\in S^{m_k}_{1,0}(D\times D\times\mathbb{R}_+,T^{*0,q}X\boxtimes(T^{*0,q}X)^*\big)$ 
	for $k=0,1,2,\ldots$. 
	
	If $a$ and $a_j$ have the properties above, we write $a\sim\sum^{\infty}_{j=0}a_j$ in 
	$S^{m_0}_{1,0}\big(D\times D\times\mathbb{R}_+,T^{*0,q}X\boxtimes(T^{*0,q}X)^*\big)$. 
	We write
	\begin{equation}  \label{e-gue140205III}
	s(x, y, t)\in S^{m}_{{\rm cl\,}}\big(D\times D\times\mathbb{R}_+,T^{*0,q}X\boxtimes(T^{*0,q}X)^*\big)
	\end{equation}
	if $s(x, y, t)\in S^{m}_{1,0}\big(D\times D\times\mathbb{R}_+,T^{*0,q}X\boxtimes(T^{*0,q}X)^*\big)$ and 
	\begin{equation}\label{e-fal}\begin{split}
	&s(x, y, t)\sim\sum^\infty_{j=0}s^j(x, y)t^{m-j}\text{ in }S^{m}_{1, 0}
	\big(D\times D\times\mathbb{R}_+\,,T^{*0,q}X\boxtimes(T^{*0,q}X)^*\big)\,,\\
	&s^j(x, y)\in C^\infty\big(D\times D,T^{*0,q}X\boxtimes(T^{*0,q}X)^*\big),\ j\in\N_0.\end{split}\end{equation}
\end{definition}

The following was proved in Theorem 4.8 in~\cite{HM14}

\begin{theorem}\label{t-gue161109}
	Given $q=0,1,2,\ldots,n$. Assume that 
	$q\notin\set{n_-,n_+}$. Then, $S^{(q)}\equiv0$ on $X$. 
\end{theorem}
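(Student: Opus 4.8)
The plan is to prove the vanishing by a purely microlocal analysis of the Szeg\H{o} projection $S^{(q)}$, which is the orthogonal spectral projection of $\Box^{(q)}_b$ onto its kernel $\Ker\Box^{(q)}_b$. The first step is to localize the wave front set of its Schwartz kernel. The characteristic set of $\Box^{(q)}_b=\ddbar_b\ol{\pr}^*_b+\ol{\pr}^*_b\ddbar_b$ consists exactly of the covectors proportional to $\omega_0$, so away from the variety $\Sigma=\Sigma^+\cup\Sigma^-$, where $\Sigma^\pm=\set{(x,\lambda\omega_0(x));\,\pm\lambda>0}$, the operator is elliptic; since $\Box^{(q)}_bS^{(q)}=0$, elliptic regularity forces $\mathrm{WF}'(S^{(q)})$ into the diagonal of $\Sigma\times\Sigma$. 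It therefore suffices to show that, microlocally near each ray of $\Sigma^+$ and of $\Sigma^-$, the projection is smoothing whenever $q\notin\set{n_-,n_+}$.

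Second, I would construct a microlocal parametrix for $S^{(q)}$ near a point $x_0$ and a ray in $\Sigma^\pm$. Choosing local coordinates and an orthonormal frame $\set{W_j}$ diagonalizing the Levi form at $x_0$, one freezes coefficients to obtain a model operator on the Heisenberg group $H_n$ with Levi signature $(n_-,n_+)$. Taking the partial Fourier transform in the transversal (Reeb) direction, with dual variable $t$, the model $\Box^{(q)}_b$ decouples on each fibre into a direct sum of rescaled harmonic oscillators, one for each Levi eigenvalue $\mu_j(x_0)$, acting on the components of a $(0,q)$-form. Seeking the kernel in the Boutet de Monvel--Sj\"ostrand form $\int_0^\infty e^{i\Phi(x,y)t}a(x,y,t)\,dt$ with $\mathrm{Im}\,\Phi\ge 0$ and $a\in S^{n}_{{\rm cl\,}}$, the leading amplitude $a^0$ is then the orthogonal projection onto the zero-energy ground states of this oscillator system on the relevant fibre half-line.

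Third, and this is the crux, comes the explicit computation of the ground-state space of the model. Using the creation/annihilation operator formalism (equivalently, Mehler's formula for the oscillator heat kernel), one finds that on the component $\Sigma^+$ (fibre variable $t>0$) the model operator has a nontrivial zero eigenspace on $(0,q)$-forms only when $q$ equals the number of Levi eigenvalues of one prescribed sign, and on $\Sigma^-$ ($t<0$) only when $q$ equals the number of eigenvalues of the opposite sign; these two numbers are precisely $n_+$ and $n_-$. Consequently the leading symbol $a^0$ of the microlocal Szeg\H{o} projection vanishes identically at $\Sigma^+$ unless $q=n_+$, and at $\Sigma^-$ unless $q=n_-$.

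Finally, if $q\notin\set{n_-,n_+}$ then $a^0$ vanishes at both $\Sigma^+$ and $\Sigma^-$; substituting this into the transport equations that determine the lower-order symbols $a^j$ shows inductively that every $a^j$ vanishes, so the full amplitude lies in $S^{-\infty}$ and the parametrix is smoothing on a conic neighborhood of $\Sigma$. Combined with the wave front localization of the first step, this yields $S^{(q)}\equiv0$ on $X$. I expect the main obstacle to be precisely the model computation of the third step, together with verifying that the parametrix construction genuinely produces a smoothing error: this rests on the non-degeneracy of the Levi form throughout $X$ (keeping the transversal oscillators elliptic and guaranteeing $\mathrm{Im}\,\Phi>0$ off the diagonal) and on the delicate complex stationary phase estimates needed to control the oscillatory integrals and close the induction.
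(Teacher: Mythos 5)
Your step (3) --- the harmonic--oscillator computation on the Heisenberg model, showing that zero modes occur at $\Sigma^-$ only for $q=n_-$ and at $\Sigma^+$ only for $q=n_+$ --- is the correct mechanism; it is precisely H\"ormander's condition $Y(q)$, which for a non-degenerate Levi form of signature $(n_-,n_+)$ holds at every characteristic point if and only if $q\notin\set{n_-,n_+}$. The genuine gap is in step (4), where you convert this into a statement about $S^{(q)}$. You posit that $S^{(q)}$ is, modulo smoothing, an oscillatory integral $\int_0^\infty e^{i\Phi(x,y)t}a(x,y,t)\,dt$ whose amplitude solves transport equations, and then argue that the amplitude vanishes. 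But the existence of such a Boutet de Monvel--Sj\"ostrand representation of the \emph{actual orthogonal projection} is not available a priori: it is the hard theorem of this subject, it is established only for $q\in\set{n_-,n_+}$ and under the closed-range hypothesis, and it is exactly what cannot be assumed for the $q$ you are considering. Showing that a formally constructed candidate (an ansatz solving the transport equations) has vanishing symbol says nothing about $S^{(q)}$ until you prove that $S^{(q)}$ agrees with that candidate modulo smoothing --- and that comparison step is entirely missing from your plan; indeed your closing sentence locates the difficulty in the model computation, not here.

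The correct bridge runs in the opposite direction, and is in essence what the paper relies on by quoting \cite[Theorem 4.8]{HM14}. Microlocally: the absence of zero modes gives the model a spectral gap (lowest eigenvalue $\geq c\abs{t}$), which yields a microlocal approximate inverse $N$ with $N\Box^{(q)}_b\equiv I$ near $\Sigma$; since $\Box^{(q)}_bS^{(q)}=0$, writing $S^{(q)}=N\bigl(\Box^{(q)}_bS^{(q)}\bigr)+\bigl(I-N\Box^{(q)}_b\bigr)S^{(q)}$ and using your step (1) wave-front localization gives $S^{(q)}\equiv0$ near $\Sigma$, hence on $X$. Even more simply, and globally: $Y(q)$ at every point yields Kohn's subelliptic estimate $\norm{u}_{1/2}^2\leq C\bigl((\Box^{(q)}_bu\,|\,u)+\norm{u}^2\bigr)$ for $u\in\Omega^{0,q}(X)$, so $\Box^{(q)}_b$ is hypoelliptic with compact resolvent; therefore $\Ker\Box^{(q)}_b$ is a finite-dimensional subspace of $\Omega^{0,q}(X)$ and $S^{(q)}(x,y)=\sum_{j=1}^{N_0}u_j(x)\langle\,\cdot\,|\,u_j(y)\,\rangle$ is smooth. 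Note that this also explains why the theorem, as stated, requires no closed-range hypothesis, whereas your route would implicitly need one.
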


We have the following (see Theorem 1.2 in~\cite{Hsiao08}, Theorem 4.7 in~\cite{HM14} and see also~\cite{BouSj76} for $q=0$)

\begin{theorem}\label{t-gue161109I}
	Let $q=n_-$ or $n_+$. Suppose that $\Box^{(q)}_b$ has $L^2$ closed range. Then, 
	$S^{(q)}(x,y)\in C^\infty(X\times X\setminus{{\rm diag\,}(X\times X)},T^{*0,q}X\boxtimes(T^{*0,q}X)^*)$.
	Let $D\subset X$ be any local coordinate patch with local coordinates $x=(x_1,\ldots,x_{2n+1})$. Then, there exist continuous operators
	$S_-, S_+:\Omega^{0,q}_0(D)\To D'(D,T^{*0,q}X)$
	such that 
	\begin{equation}\label{e-gue161110}
	S^{(q)}\equiv S_-+S_+\ \ \mbox{on $D$},
	\end{equation}
	%\begin{equation}\label{e-gue161110I}
	%\begin{split}
	%&S_-=0\ \ \mbox{if $q\neq n_-$},\\
	%&S_+=0\ \ \mbox{if $q\neq n_+$},\\
	%\end{split}
	%\end{equation}
	and $S_-(x,y)$, $S_+(x,y)$ satisfy
	\[\begin{split}
	&S_-(x, y)\equiv\int^{\infty}_{0}e^{i\varphi_-(x, y)t}s_-(x, y, t)dt\ \ \mbox{on $D$},\\
	&S_+(x, y)\equiv\int^{\infty}_{0}e^{i\varphi_+(x, y)t}s_+(x, y, t)dt\ \ \mbox{on $D$},
	\end{split}\]
	with 
	\begin{equation}  \label{e-gue161110r}\begin{split}
	&s_-(x, y, t), s_+(x,y,t)\in S^{n}_{1,0}(D\times D\times\mathbb{R}_+,T^{*0,q}X\boxtimes(T^{*0,q}X)^*), \\
	&s_-(x,y,t)=0\ \ \mbox{if $q\neq n_-$},\ \ s_+(x,y,t)=0\ \ \mbox{if $q\neq n_+$},\\
	&s_-(x, y, t)\sim\sum^\infty_{j=0}s^j_-(x, y)t^{n-j}\quad\text{ in }S^{n}_{1, 0}(D\times D\times\mathbb{R}_+,T^{*0,q}X\boxtimes(T^{*0,q}X)^*),\\
	&s_+(x, y, t)\sim\sum^\infty_{j=0}s^j_+(x, y)t^{n-j}\quad\text{ in }S^{n}_{1, 0}(D\times D\times\mathbb{R}_+,T^{*0,q}X\boxtimes(T^{*0,q}X)^*),\\
	&s^j_+(x,y), s^j_-(x, y)\in C^\infty(D\times D,T^{*0,q}X\boxtimes(T^{*0,q}X)^*),\ \ j=0,1,2,3,\ldots,\\
	&s^0_-(x,x)\neq0,\ \ \forall x\in D,\ \ s^0_+(x,x)\neq0,\ \ \forall x\in D,
	\end{split}\end{equation}
	and the phase functions $\varphi_-$, $\varphi_+$  satisfy
	\begin{equation}\label{e-gue140205IV}
	\begin{split}
	&\varphi_+(x,y), \varphi_-\in C^\infty(D\times D),\ \ {\rm Im\,}\varphi_-(x, y)\geq0,\\
	&\varphi_-(x, x)=0,\ \ \varphi_-(x, y)\neq0\ \ \mbox{if}\ \ x\neq y,\\
	&d_x\varphi_-(x, y)\big|_{x=y}=-\omega_0(x), \ \ d_y\varphi_-(x, y)\big|_{x=y}=\omega_0(x), \\
	&\varphi_-(x, y)=-\ol\varphi_-(y, x), \\
	&-\ol\varphi_+(x, y)=\varphi_-(x,y).
	\end{split}
	\end{equation}
\end{theorem}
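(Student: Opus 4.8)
The plan is to build, microlocally, an approximate orthogonal projection onto $\Ker\Box^{(q)}_b$ as a sum of two oscillatory integrals, and then to invoke the closed range hypothesis to identify this parametrix with the true Szeg\H{o} kernel modulo a smoothing operator. First I would examine the characteristic variety of $\Box^{(q)}_b$. Its principal symbol is (up to a positive factor) the squared length of the horizontal part of the covector, so it vanishes exactly on $\Sigma=\set{(x,t\omega_0(x));\,t\neq0}\subset T^*X\setminus0$, which splits into the two half-lines $\Sigma^+=\set{(x,t\omega_0(x));\,t>0}$ and $\Sigma^-=\set{(x,t\omega_0(x));\,t<0}$. Off $\Sigma$ the operator $\Box^{(q)}_b$ is classically elliptic, so the wavefront set of the kernel of $S^{(q)}$ is contained in $(\Sigma^+\times\Sigma^+)\cup(\Sigma^-\times\Sigma^-)$; this simultaneously yields smoothness of $S^{(q)}(x,y)$ off the diagonal and the decomposition $S^{(q)}\equiv S_-+S_+$ into the pieces microlocally supported near $\Sigma^-$ and near $\Sigma^+$.

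For each piece I would make the ansatz $S_\pm(x,y)\equiv\int_0^\infty e^{i\varphi_\pm(x,y)t}s_\pm(x,y,t)\,dt$ and impose $\Box^{(q)}_b S_\pm\equiv0$. Expanding $\Box^{(q)}_b\bigl(e^{i\varphi_- t}s_-\bigr)$ and collecting powers of $t$ produces, at top order, a complex eikonal equation for $\varphi_-$ and, at the lower orders, a hierarchy of transport equations for the classical symbol terms $s^j_-$. The eikonal equation is solved by a complex geometric optics / Kuranishi-type construction, yielding a phase with ${\rm Im\,}\varphi_-\geq0$, vanishing precisely on the diagonal, with $d_x\varphi_-(x,y)|_{x=y}=-\omega_0(x)$ and $d_y\varphi_-(x,y)|_{x=y}=\omega_0(x)$; the conjugation symmetries $\varphi_-(x,y)=-\ol\varphi_-(y,x)$ and $-\ol\varphi_+=\varphi_-$ are then forced by the self-adjointness of $\Box^{(q)}_b$ and by the duality between the $q=n_-$ and $q=n_+$ situations. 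The transport equations are integrated recursively along the associated complex flow, determining every $s^j_-$ once $s^0_-$ is fixed.

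The leading symbol $s^0_-$ and its nonvanishing on the diagonal I would pin down by requiring that the constructed operator be an approximate orthogonal projection, namely $S_-\equiv S_-^2$ and $S_-\equiv S_-^*$; composing the two oscillatory integrals by complex stationary phase fixes the remaining freedom in $s^0_-$. At each point of $\Sigma^-$ this reduces, after a Fourier integral reduction to the Heisenberg model, to computing the Szeg\H{o} projection of a model Kohn Laplacian; the assumption $q\in\set{n_-,n_+}$ is exactly the condition under which this model kernel is nonzero, and the signature of the Levi form enters through the model to give $s^0_-(x,x)\neq0$ when $q=n_-$, while $s_-\equiv0$ if $q\neq n_-$ (and symmetrically for $s_+$).

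The final, and I expect hardest, step is to show that the genuine $S^{(q)}$ agrees with $P:=S_-+S_+$ modulo smoothing. Here the closed range hypothesis is indispensable: it furnishes a partial inverse (Green operator) $N$ with $S^{(q)}=I-\Box^{(q)}_b N=I-N\Box^{(q)}_b$ on $L^2_{(0,q)}(X)$, i.e. a spectral gap at $0$. By construction $P$ satisfies $\Box^{(q)}_b P\equiv0$, $P\Box^{(q)}_b\equiv0$ and $P\equiv P^*\equiv P^2$. Writing $S^{(q)}-P=S^{(q)}(I-P)-(I-S^{(q)})P$ and using $\Box^{(q)}_bP\equiv0$ together with $I-S^{(q)}=N\Box^{(q)}_b$, the boundedness of $N$, and subelliptic regularity off $\Sigma$, one checks that each term is smoothing, giving $S^{(q)}\equiv P$ on $D$. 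Controlling the mapping properties of $N$ against the oscillatory integrals, so that these ``$\equiv$'' statements hold as genuine smoothing estimates rather than merely formally, is the main technical obstacle of the argument.
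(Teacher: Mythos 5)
The first thing to note is that the paper does not prove this statement at all: it is imported verbatim as a known result, with the citations given right before it (Theorem 1.2 of \cite{Hsiao08}, Theorem 4.7 of \cite{HM14}, and \cite{BouSj76} for $q=0$). So there is no ``paper's proof'' to diverge from; what you have written is an outline of the strategy actually used in those references (microlocal splitting along the two halves $\Sigma^{\pm}$ of the characteristic variety, a Melin--Sj\"ostrand complex eikonal/transport construction, determination of the leading symbol from the Heisenberg-type model, and identification with the true projection via the closed range hypothesis). The architecture is the right one, but two of your steps contain genuine gaps.

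First, you claim that ellipticity of $\Box^{(q)}_b$ off $\Sigma$ ``simultaneously yields smoothness of $S^{(q)}(x,y)$ off the diagonal and the decomposition $S^{(q)}\equiv S_-+S_+$.'' It does not. What microlocal ellipticity together with $\Box^{(q)}_bS^{(q)}=0$, $S^{(q)}\Box^{(q)}_b=0$ and $S^{(q)}=(S^{(q)})^*$ gives is only $\operatorname{WF}'(S^{(q)})\subset\Sigma\times\Sigma$. That set contains points $(x,\xi,y,\eta)$ with $x\neq y$, as well as the mixed pieces $\Sigma^+\times\Sigma^-$ and $\Sigma^-\times\Sigma^+$; nothing at this stage forces the wave front relation to sit over the diagonal or to respect the $\pm$ splitting. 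In the cited proofs both the off-diagonal smoothness and the absence of $+/-$ mixing are \emph{consequences} of the completed theorem (the oscillatory integrals are smooth wherever $\varphi_{\mp}\neq0$, i.e.\ off the diagonal), not inputs to it, so your first step cannot be used as a starting point.

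Second, the identification $S^{(q)}\equiv P$. Closed range does give $S^{(q)}=I-\Box^{(q)}_bN=I-N\Box^{(q)}_b$ with $N$ the partial inverse, but $N$ is a priori only $L^2$-bounded, and the composition of an $L^2$-bounded operator with a smoothing operator need not be smoothing: its kernel is smooth in one variable with values in $L^2$ in the other, which is strictly weaker than joint smoothness. Hence ``each term is smoothing, by boundedness of $N$ and subelliptic regularity off $\Sigma$'' does not close the argument; you have flagged this as the main obstacle, but flagging it is not resolving it. What \cite{Hsiao08} and \cite{HM14} actually do is construct, alongside the approximate projection $P$, an approximate Green operator $N'$ (itself a complex Fourier integral operator) with $\Box^{(q)}_bN'+P\equiv I$ and $N'\Box^{(q)}_b+P\equiv I$; one then gets $S^{(q)}\equiv S^{(q)}\bigl(\Box^{(q)}_bN'+P\bigr)=S^{(q)}P$ from $S^{(q)}\Box^{(q)}_b=0$, and a symmetric argument gives $P\equiv S^{(q)}P$, arranged so that $N$ never appears composed with a single smoothing factor. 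The construction of this two-sided microlocal parametrix for $\Box^{(q)}_b$ --- which is precisely where the signature condition $q\in\set{n_-,n_+}$ and H\"ormander-type hypoellipticity with loss of one derivative enter --- is the missing ingredient in your proposal.
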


\begin{remark}\label{r-gue180404}
	Note that for a strictly pseudoconvec CR manifold of dimension $3$, $\Box^{(0)}_b$ does not have $L^2$ closed range in general (see~\cite{Ros:65}). Kohn~\cite{Koh86} proved that if $q=n_-=n_+$ or $\abs{n_--n_+}>1$ then $\Box^{(q)}_b$ has $L^2$ closed range.
\end{remark}

\subsection{$G$-equivariant Szeg\H{o} kernel}\label{s-gue161109}
Since $G$ preserves $J$ and $(\,\cdot\,|\,\cdot\,)$ is $G$-invariant, it is straightforward to see that for all $g\in G$
\begin{equation}\label{e-gue161231}
\begin{split}
&g^*\ddbar_b=\ddbar_bg^*\ \ \mbox{on $\Omega^{0,q}(X)$}, \\
&g^*\ol{\pr}^*_b=\ol{\pr}^*_bg^*\ \ \mbox{on $\Omega^{0,q}(X)$}, \\
&g^*\Box^{(q)}_b=\Box^{(q)}_bg^*\ \ \mbox{on $\Omega^{0,q}(X)$}.
\end{split}
\end{equation}
Denote by $\ddbar_{b,k}$ (resp. $\Box_{b,k}^{(q)}$) the restriction of
$\ddbar_{b}$ (resp. $\Box_{b}^{(q)}$) on $\Omega_k^{0,q}(X)$.
By Definition \ref{d-10311}, we have
\begin{equation}\label{e-gue161231II}
\begin{split}
&\ddbar_{b,k}: \Omega_k^{0,q}(X)\to \Omega_k^{0,q+1}(X),\\
&\Box_{b,k}^{(q)}: \Omega_k^{0,q}(X)\to \Omega_k^{0,q}(X).
\end{split}
\end{equation}

%Put ${\rm Ker\,}\Box^{(q)}_{b,k}:={\rm Ker\,}\Box^{(q)}_b\bigcap L^2_{(0,q),k}(X)$. 
The $G$-equivariant Szeg\H{o} projection is the orthogonal projection 
\[
S^{(q)}_k:L^2_{(0,q)}(X)\To {\rm Ker\,}\Box^{(q)}_b\bigcap L^2_{(0,q),k}(X)
\]
with respect to $(\,\cdot\,|\,\cdot\,)$. Let $S^{(q)}_k(x,y)\in D'(X\times X,T^{*0,q}X\boxtimes(T^{*0,q}X)^*)$ be its distribution kernel. 
\begin{lemma}\label{l-112}
	\begin{equation}\label{e-112}
	S^{(q)}_k(x,y)=d_k\int_G S^{(q)}(g\circ x, y)\ol{\chi_k(g)}d\mu(g).
	\end{equation}
\end{lemma}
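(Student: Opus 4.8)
The plan is to realize $S^{(q)}_k$ as the composition of the ordinary Szeg\H{o} projection $S^{(q)}$ with the $k$-th Fourier component operator, and then to read off the distribution kernel. Introduce the averaging operator $Q_k u := u_k = d_k\int_G (g^* u)\,\overline{\chi_k(g)}\,d\mu(g)$. The first step is to check that $Q_k$ extends to a bounded operator on $L^2_{(0,q)}(X)$ which is precisely the orthogonal projection onto $L^2_{(0,q)}(X)_k$. Contractivity on smooth forms, and hence a bounded $L^2$ extension whose range lies in the closed subspace $L^2_{(0,q)}(X)_k$, follows from the Bessel-type estimate \eqref{e-10314}; idempotency $Q_k^2=Q_k$ and the fact that $Q_k$ acts as the identity on $L^2_{(0,q)}(X)_k$ follow from Proposition \ref{p-11} (so that $(u_k)_k=u_k$); and self-adjointness follows from the $G$-invariance of $(\,\cdot\,|\,\cdot\,)$ together with the character identity $\chi_k(g^{-1})=\overline{\chi_k(g)}$, exactly as in the manipulation leading to \eqref{e-103111}.

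The second step uses the commutation relations \eqref{e-gue161231}. Since $g^*\Box^{(q)}_b=\Box^{(q)}_b g^*$ for every $g\in G$, the orthogonal projection $S^{(q)}$ onto ${\rm Ker\,}\Box^{(q)}_b$ commutes with every $g^*$, hence with the $G$-average $Q_k$; that is, $Q_k S^{(q)}=S^{(q)}Q_k$. A product of two commuting orthogonal projections is itself the orthogonal projection onto the intersection of their ranges, so $Q_k S^{(q)}$ is the orthogonal projection onto ${\rm Ker\,}\Box^{(q)}_b\cap L^2_{(0,q)}(X)_k$. By uniqueness of the orthogonal projection onto a closed subspace, this gives $S^{(q)}_k=Q_k S^{(q)}$.

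The final step is to compute the kernel. For $u\in\Omega^{0,q}(X)$ write $w=S^{(q)}u$, so that $w(x)=\int_X S^{(q)}(x,y)u(y)\,dv(y)$, and note that $(g^*w)(x)=\sigma_g(x)\,w(g\circ x)$, where $\sigma_g(x)$ is the pullback bundle isomorphism $T^{*0,q}_{g\circ x}X\to T^{*0,q}_x X$. Then $(S^{(q)}_k u)(x)=d_k\int_G (g^* w)(x)\,\overline{\chi_k(g)}\,d\mu(g)$, and inserting the kernel of $S^{(q)}$ and exchanging the $G$-integration with the $X$-integration yields $(S^{(q)}_k u)(x)=\int_X\big[d_k\int_G \sigma_g(x)S^{(q)}(g\circ x,y)\overline{\chi_k(g)}\,d\mu(g)\big]u(y)\,dv(y)$. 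Reading off the kernel and absorbing $\sigma_g(x)$ into the standard notation $S^{(q)}(g\circ x,y)$ gives \eqref{e-112}.

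The point that needs the most care is this last interchange of integrals: since $S^{(q)}$ is not smoothing across the diagonal, $S^{(q)}(x,y)$ is only a distribution, so the manipulation should be carried out by pairing against test sections rather than pointwise. This is legitimate because $G$ is compact and $g\mapsto g^*$ acts continuously on distribution kernels, so that the $G$-average of the pullbacks of $S^{(q)}(x,y)$ is a well-defined distribution on $X\times X$; fixing the convention that $S^{(q)}(g\circ x,y)$ already incorporates the bundle map $\sigma_g(x)$ then makes the stated formula literal.
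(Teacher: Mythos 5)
Your proposal is correct and takes essentially the same route as the paper: the paper's proof also introduces the averaging operator $Q_k$, writes $S^{(q)}_k=Q_k\circ S^{(q)}$, and obtains \eqref{e-112} by the same insertion of the kernel of $S^{(q)}$ and interchange of the $G$- and $X$-integrations. If anything, you supply more detail than the paper, which asserts the factorization $S^{(q)}_k=Q_k\circ S^{(q)}$ without justification, whereas you prove it via the fact that $Q_k$ and $S^{(q)}$ are commuting orthogonal projections (using \eqref{e-gue161231} and the unitarity of $g^*$), so their product is the orthogonal projection onto the intersection of their ranges.
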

\begin{proof}
	Let
	\begin{equation}\label{e-1121}
	\begin{split}
	Q_k:&L^2_{(0,q)}(X)\to L^2_{(0,q),k}(X)\\
	&u\to u_k=d_k\int_G (g^{\star}u)(x)\ol{\chi_k(g)}d\mu(g).
	\end{split}
	\end{equation}
	Then $S^{(q)}_k=Q_k\circ S^{(q)}$.
	For $u\in L^2_{(0,q)}(X)$, we have
	\begin{equation}\label{e-1122}
	\begin{split}
	S^{(q)}_k u&=Q_k\circ S^{(q)}u\\
	&=Q_k\int S^{(q)}(x,y)u(y)dy\\
	&=d_k\int_G g^{\star}(\int S^{(q)}(x,y)u(y)dy )\ol{\chi_k(g)}d\mu(g)\\
	&=d_k\int_G (\int S^{(q)}(g\circ x,y)u(y)dy )\ol{\chi_k(g)}d\mu(g)\\
	&=\int (d_k\int_G S^{(q)}(g\circ x, y)\ol{\chi_k(g)}d\mu(g)) u(y)dy.
	\end{split}
	\end{equation}
	Then the proof is completed.
\end{proof}
Note that
\begin{equation}\label{e-1172}
\begin{split}
S^{(q)}_k(h\circ x,y)&=d_k\int_G S^{(q)}(g\circ h\circ x, y)\ol{\chi_k(g)}d\mu(g)\\
&=d_k\int_G S^{(q)}(g\circ x, y)\ol{\chi_k(g\circ h^{-1})}d\mu(g)\\
&=d_k\int_G S^{(q)}(g\circ x, y)\sum_{j=1}^{d_k}\sum_{l=1}^{d_k}
\ol{R_{k,j,l}(g)}R_{k,j,l}(h)d\mu(g).
\end{split}
\end{equation}
So $S^{(q)}_k(x,y)$ is not $G$-invariant.

\subsection{$G$-equivariant Szeg\H{o} kernels near $\mu^{-1}(0)$}\label{s-gue161110w}

In this subsection, we will study $G$-equivariant Szeg\H{o} kernel near $\mu^{-1}(0)$. 
Let $e_0\in G$ be the identity element. 
Let $v=(v_1,\ldots,v_d)$ be the local coordinates of $G$ defined  in a neighborhood $V$ of $e_0$ with $v(e_0)=(0,\ldots,0)$. From now on, we will identify the element $e\in V$ with $v(e)$. We recall the following on group actions in local coordinates, see \cite[Theorem 3.6]{HH}.

\begin{theorem}\label{t-gue161202}
	Let $p\in\mu^{-1}(0)$. There exist local coordinates $v=(v_1,\ldots,v_d)$ of $G$ defined in  a neighborhood $V$ of $e_0$ with $v(e_0)=(0,\ldots,0)$, local coordinates $x=(x_1,\ldots,x_{2n+1})$ of $X$ defined in a neighborhood $U=U_1\times U_2$ of $p$ with $0\leftrightarrow p$, where $U_1\subset\Real^d$ is an open set of $0\in\Real^d$,  $U_2\subset\Real^{2n+1-d}$ is an open set of $0\in\Real^{2n+1-d} $ and a smooth function $\gamma=(\gamma_1,\ldots,\gamma_d)\in C^\infty(U_2,U_1)$ with $\gamma(0)=0\in\Real^d$  such that
	\begin{equation}\label{e-gue161202}
	\begin{split}
	&(v_1,\ldots,v_d)\circ (\gamma(x_{d+1},\ldots,x_{2n+1}),x_{d+1},\ldots,x_{2n+1})\\
	&=(v_1+\gamma_1(x_{d+1},\ldots,x_{2n+1}),\ldots,v_d+\gamma_d(x_{d+1},\ldots,x_{2n+1}),x_{d+1},\ldots,x_{2n+1}),\\
	&\forall (v_1,\ldots,v_d)\in V,\ \ \forall (x_{d+1},\ldots,x_{2n+1})\in U_2,
	\end{split}
	\end{equation}
	\begin{equation}\label{e-gue161206}
	\begin{split}
	&\underline{\mathfrak{g}}={\rm span\,}\set{\frac{\pr}{\pr x_1},\ldots,\frac{\pr}{\pr x_d}},\\
	&\mu^{-1}(0)\bigcap U=\set{x_{d+1}=\cdots=x_{2d}=0},\\
	&\mbox{On $\mu^{-1}(0)\bigcap U$, we have $J(\frac{\pr}{\pr x_j})=\frac{\pr}{\pr x_{d+j}}+a_j(x)\frac{\pr}{\pr x_{2n+1}}$, $j=1,2,\ldots,d$}, 
	\end{split}
	\end{equation}
	where $a_j(x)$ is a smooth function on $\mu^{-1}(0)\bigcap U$, independent of $x_1,\ldots,x_{2d}$, $x_{2n+1}$ and $a_j(0)=0$, $j=1,\ldots,d$, 
	\begin{equation}\label{e-gue161202I}
	\begin{split}
	&T^{1,0}_pX={\rm span\,}\set{Z_1,\ldots,Z_n},\\
	&Z_j=\frac{1}{2}(\frac{\pr}{\pr x_j}-i\frac{\pr}{\pr x_{d+j}})(p),\ \ j=1,\ldots,d,\\
	&Z_j=\frac{1}{2}(\frac{\pr}{\pr x_{2j-1}}-i\frac{\pr}{\pr x_{2j}})(p),\ \ j=d+1,\ldots,n,\\
	&\langle\,Z_j\,|\,Z_l\,\rangle=\delta_{j,l},\ \ j,l=1,2,\ldots,n,\\
	&\mathcal{L}_p(Z_j, \ol Z_l)=\mu_j\delta_{j,l},\ \ j,l=1,2,\ldots,n
	\end{split}
	\end{equation}
	and 
	\begin{equation}\label{e-gue161219}
	\begin{split}
	\omega_0(x)&=(1+O(\abs{x}))dx_{2n+1}+\sum^d_{j=1}4\mu_jx_{d+j}dx_j\\
	&\quad+\sum^n_{j=d+1}2\mu_jx_{2j}dx_{2j-1}-\sum^n_{j=d+1}2\mu_jx_{2j-1}dx_{2j}+\sum^{2n}_{j=d+1}b_jx_{2n+1}dx_j+O(\abs{x}^2),
	\end{split}
	\end{equation}
	where $b_{d+1}\in\Real,\ldots,b_{2n}\in\Real$. 
\end{theorem}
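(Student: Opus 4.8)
The plan is to build the chart in four stages, arranging each stage so as not to disturb the normalizations achieved before it. First I would straighten the action using local freeness (Assumption~\ref{a-gue170123II}). Fix a basis $\xi_1,\ldots,\xi_d$ of $\mathfrak g$; the induced vector fields $(\xi_1)_X,\ldots,(\xi_d)_X$ are linearly independent near $p$, so the orbit through $p$ is a $d$-dimensional immersed submanifold. Taking a $(2n+1-d)$-dimensional slice $N$ through $p$ transverse to $\underline{\mathfrak g}_p$, together with the exponential chart $v\mapsto\exp(\sum_j v_j\xi_j)$ on $G$, the map $(v,y)\mapsto v\circ y$ is a local diffeomorphism $V\times N\To X$. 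Pulling back produces the product neighborhood $U=U_1\times U_2$, the translation formula \eqref{e-gue161202}, and $\underline{\mathfrak g}={\rm span}\set{\pr/\pr x_1,\ldots,\pr/\pr x_d}$; the function $\gamma$ simply records the parametrization of $N$ inside $U$.

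Next I would locate $\mu^{-1}(0)$ and extract the action of $J$. Differentiating \eqref{E:cmpm} and combining Cartan's formula with the $G$-invariance of $\omega_0$ from Assumption~\ref{a-gue170123I} (which gives $\iota_{\xi_X}d\omega_0=-d(\iota_{\xi_X}\omega_0)$) yields $\langle d\mu(x),\xi\rangle=-\iota_{\xi_X}d\omega_0$. On $\mu^{-1}(0)$ one has $\omega_0(\xi_X)=\langle\mu,\xi\rangle=0$, so $\underline{\mathfrak g}\subset HX$ there; then \eqref{E:levi} and the non-degeneracy of the Levi form (Assumption~\ref{a-gue170123}) identify the conormal directions of $\mu^{-1}(0)$ with $J\underline{\mathfrak g}$. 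Choosing the slice coordinates so that $x_{d+1},\ldots,x_{2d}$ run along $J\underline{\mathfrak g}$ gives $\mu^{-1}(0)\bigcap U=\set{x_{d+1}=\cdots=x_{2d}=0}$. Since the action is CR and preserves $J$, the map $J$ carries each $\pr/\pr x_j$ into $\pr/\pr x_{d+j}$ modulo a multiple of the Reeb field, which is the formula $J(\pr/\pr x_j)=\pr/\pr x_{d+j}+a_j\pr/\pr x_{2n+1}$ in \eqref{e-gue161206}; $G$-invariance forces $a_j$ to be independent of $x_1,\ldots,x_{2d},x_{2n+1}$, and $a_j(0)=0$ after the linear normalization below.

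The third step, fixing the frame at $p$, is pure linear algebra on $T^{1,0}_pX$. I would simultaneously orthonormalize with respect to $\langle\,\cdot\,|\,\cdot\,\rangle$ and diagonalize the Hermitian Levi form, producing $Z_1,\ldots,Z_n$ with $\mathcal L_p(Z_j,\ol Z_\ell)=\mu_j\delta_{j\ell}$, taking the first $d$ of the $Z_j$ from the orbit directions $\tfrac12(\pr/\pr x_j-i\,J(\pr/\pr x_j))$ and the remaining $n-d$ from CR directions transverse to $\underline{\mathfrak g}\oplus J\underline{\mathfrak g}$. A final linear change of the $x$-coordinates at $p$ then yields \eqref{e-gue161202I}.

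The last and hardest stage is to bring $\omega_0$ to the explicit second-order shape \eqref{e-gue161219}. I would Taylor expand $\omega_0$ at $0$: the conditions $\omega_0(T)=-1$ and $d\omega_0(T,\cdot)=0$ from \eqref{e-gue170111ry} pin down the $dx_{2n+1}$-coefficient as $1+O(\abs x)$, and the first-order terms are read off from the eigenvalues $\mu_j$ of the previous step. The asymmetry between the orbit block $j\le d$, written as the single $G$-invariant term $4\mu_jx_{d+j}dx_j$, and the transverse block $j>d$, written in the antisymmetric Heisenberg form $2\mu_jx_{2j}dx_{2j-1}-2\mu_jx_{2j-1}dx_{2j}$, reflects that in the first block the coefficients must not depend on $x_1,\ldots,x_d$ (group-adaptation), whereas in the second I am free to use the standard Darboux normal form. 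The main obstacle is precisely this compatibility: the coordinate changes used to clear the remaining unwanted Taylor coefficients of $\omega_0$ must preserve the straightening \eqref{e-gue161202}, the equations of $\mu^{-1}(0)$, and the frame at $p$. I would therefore restrict to changes that are affine in $x_1,\ldots,x_{2d}$ and independent of $x_1,\ldots,x_d$, using them to normalize the residual $b_jx_{2n+1}dx_j$ terms and absorbing everything else into the $O(\abs x^2)$ remainder.
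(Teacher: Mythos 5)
The paper itself contains \emph{no} proof of this statement: Theorem~\ref{t-gue161202} is recalled from \cite[Theorem~3.6]{HH} (``We recall the following on group actions in local coordinates''), so there is no internal argument to compare yours against, and your proposal has to be judged on its own merits against the slice-type argument that the cited source supplies. Your skeleton --- slice coordinates from local freeness, the identity $\langle d\mu,\xi\rangle=-\iota_{\xi_X}d\omega_0$ from Cartan's formula and $g^*\omega_0=\omega_0$, pointwise diagonalization of the Levi form at $p$, then a Taylor expansion of $\omega_0$ --- is indeed that argument's skeleton, and stages 1 and 3 are sound; in particular no non-abelian difficulty arises in stage 1, because \eqref{e-gue161202} only ever applies a single group element to a point of the slice, so it holds by construction of the slice chart.

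The genuine gap is in your stage 2, at the sentence ``the non-degeneracy of the Levi form \dots\ identif[ies] the conormal directions of $\mu^{-1}(0)$ with $J\underline{\mathfrak g}$''. What the third line of \eqref{e-gue161206} forces is the splitting $T_xX=J\underline{\mathfrak g}_x\oplus T_x\mu^{-1}(0)$ at each $x\in\mu^{-1}(0)$ (the vectors $\pr/\pr x_{d+j}+a_j\,\pr/\pr x_{2n+1}$ are transverse to $\set{x_{d+1}=\cdots=x_{2d}=0}$). Since $T_x\mu^{-1}(0)=\set{w\in T_xX;\,d\omega_0(\xi_X,w)=0,\ \forall \xi\in\mathfrak g}$, this splitting is \emph{equivalent} to the non-degeneracy on $\underline{\mathfrak g}_x$ of the symmetric form $(u,v)\mapsto\langle d\omega_0(x),Ju\wedge v\rangle$, i.e.\ to the invertibility of the matrix $R_x$ of the Introduction. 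That does not follow from non-degeneracy of the Levi form on all of $H_xX$: because $\mu^{-1}(0)$ is $G$-invariant, $\underline{\mathfrak g}_x\subset T_x\mu^{-1}(0)$, so $\underline{\mathfrak g}_x$ is a $d\omega_0$-isotropic subspace of $H_xX$, and in indefinite signature such a subspace can consist of Levi-null vectors, making $R_x$ singular. Concretely, take the Heisenberg model $\Complex^2\times\Real$ with $T^{0,1}X$ spanned by $\pr/\pr\ol z_j-i\epsilon_jz_j\,\pr/\pr t$, $\epsilon_1=1$, $\epsilon_2=-1$ (signature $(1,1)$), $\omega_0=dt+i\sum_j\epsilon_j(z_jd\ol z_j-\ol z_jdz_j)$, and the diagonal action $(z_1,z_2,t)\mapsto(e^{i\theta}z_1,e^{i\theta}z_2,t)$: then $\langle\mu,\xi\rangle=2(\abs{z_1}^2-\abs{z_2}^2)$, every standing assumption holds near any $p\in\mu^{-1}(0)$ with $(z_1,z_2)\neq(0,0)$, yet $\xi_X$ is Levi-null along $\mu^{-1}(0)$ and $J\xi_X$ (a radial direction) is \emph{tangent} to $\mu^{-1}(0)$, so no coordinates satisfying \eqref{e-gue161206} can exist. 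Hence this step requires an additional hypothesis --- invertibility of $R_x$ on $\mu^{-1}(0)$, which is implicitly needed by the paper anyway through the factor $\abs{\det R_x}^{-1/2}$ in \eqref{e-gue170128}, and which is automatic exactly when the Levi form is definite near $\mu^{-1}(0)$ --- and cannot be deduced from Assumptions \ref{a-gue170123} and \ref{a-gue170123II} as you claim. (Smaller, repairable issues: the independence of $a_j$ from $x_{2n+1}$ and the compatibility of your stage-4 normalizations with \eqref{e-gue161202} are asserted rather than proved; but the transversality above is the real obstruction.)
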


The following describes the phase function in Theorem~\ref{t-gue161109I}, 
see \cite[Theorem 3.7]{HH}.
\begin{theorem}\label{t-gue161222}
	Let $p\in\mu^{-1}(0)$ and take local coordinates $x=(x_1,\ldots,x_{2n+1})$ of $X$ defined in an open set $U$of $p$ with $0\leftrightarrow p$ such that \eqref{e-gue161206}, \eqref{e-gue161202I} and \eqref{e-gue161219} hold.  Let $\varphi_-(x,y)\in C^\infty(U\times U)$ be as in Theorem~\ref{t-gue161109I}. Then, 
	\begin{equation} \label{e-gue161222}
	\begin{split}
	\varphi_-(x, y)&=-x_{2n+1}+y_{2n+1}-2\sum^d_{j=1}\mu_jx_jx_{d+j}+2\sum^d_{j=1}\mu_jy_jy_{d+j}
	+i\sum^{n}_{j=1}\abs{\mu_j}\abs{z_j-w_j}^2 \\
	&+\sum^{n}_{j=1}i\mu_j(\ol z_jw_j-z_j\ol w_j)+\sum^d_{j=1}(-\frac{i}{2}b_{d+j})(-z_jx_{2n+1}+w_jy_{2n+1})\\
	&+\sum^d_{j=1}(\frac{i}{2}b_{d+j})(-\ol z_jx_{2n+1}+\ol w_jy_{2n+1})+\sum^n_{j=d+1}\frac{1}{2}(b_{2j-1}-ib_{2j})(-z_jx_{2n+1}+w_jy_{2n+1})\\
	&+\sum^n_{j=d+1}\frac{1}{2}(b_{2j-1}+ib_{2j})(-\ol z_jx_{2n+1}+\ol w_jy_{2n+1})+(x_{2n+1}-y_{2n+1})f(x, y) +O(\abs{(x, y)}^3),
	\end{split}
	\end{equation}
	where $z_j=x_j+ix_{d+j}$, $j=1,\ldots,d$, $z_j=x_{2j-1}+ix_{2j}$, $j=2d+1,\ldots,2n$, $\mu_j$, $j=1,\ldots,n$, and $b_{d+1}\in\Real,\ldots,b_{2n}\in\Real$ are as in \eqref{e-gue161219} and $f$ is smooth and satisfies $f(0,0)=0$, $f(x, y)=\ol f(y, x)$. 
\end{theorem}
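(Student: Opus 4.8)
The plan is to determine the $2$-jet of $\varphi_-$ at $(0,0)$ by substituting a general Taylor ansatz into the conditions that characterize the phase of the Szeg\"o kernel, and then to read off the stated coefficients from the normal form \eqref{e-gue161219} of $\omega_0$. Recall from the construction behind Theorem~\ref{t-gue161109I} (see \cite{Hsiao08,HM14}) that $\varphi_-$ is pinned down, modulo the equivalence of phase functions that does not change the Fourier integral operator $S_-$, by the diagonal normalizations in \eqref{e-gue140205IV} together with the complex eikonal equation obtained by inserting $S_-\equiv\int_0^\infty e^{i\varphi_- t}s_-\,dt$ into $\Box^{(q)}_b S_-\equiv0$ and collecting the top order in $t$. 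This eikonal equation forces $d_x\varphi_-(x,y)$ to lie in the characteristic set $\set{\lambda\omega_0}$ of $\Box^{(q)}_b$ along the diagonal and, off the diagonal, requires the $T^{1,0}X\oplus T^{0,1}X$-component of $d_x\varphi_-$ to be null for the complexified Levi form, with the branch selected by ${\rm Im\,}\varphi_-\geq0$; the residual ambiguity is precisely the factor $(x_{2n+1}-y_{2n+1})f(x,y)$ and the $O(\abs{(x,y)}^3)$ remainder appearing in \eqref{e-gue161222}.

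First I would fix the linear terms. Since $\omega_0(0)=dx_{2n+1}$ by \eqref{e-gue161219}, the normalizations $d_x\varphi_-(x,x)=-\omega_0(x)$ and $d_y\varphi_-(x,x)=\omega_0(x)$ give the linear part $-x_{2n+1}+y_{2n+1}$, and the conjugation symmetry $\varphi_-(x,y)=-\ol\varphi_-(y,x)$ constrains every higher coefficient. Next I would handle the real quadratic terms tangent to $\mu^{-1}(0)$: expanding the diagonal identity $d_x\varphi_-(x,x)=-\omega_0(x)$ to first order and matching against the linear part of $\omega_0$ in \eqref{e-gue161219}, namely $\sum_{j=1}^d 4\mu_jx_{d+j}dx_j$ and $\sum_{j>d}(2\mu_jx_{2j}dx_{2j-1}-2\mu_jx_{2j-1}dx_{2j})$, determines, after accounting for the contribution of the Hermitian terms below along the diagonal, the real quadratic terms $-2\sum_{j=1}^d\mu_jx_jx_{d+j}$ together with their $y$-counterparts of opposite sign.

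The core of the argument is the Hermitian quadratic part, which I would extract from the complex eikonal equation. Writing everything in the complex coordinates $z_j,w_j$ adapted to the frame \eqref{e-gue161202I}, the null condition on the horizontal part of $d_x\varphi_-$ together with ${\rm Im\,}\varphi_-\geq0$ forces, eigenvalue by eigenvalue, the combination $i\abs{\mu_j}\abs{z_j-w_j}^2$; the absolute value $\abs{\mu_j}$ rather than $\mu_j$ appears exactly because positivity of the imaginary part selects, for each $j$, the branch of the null quadric with nonnegative imaginary part, independently of the sign of $\mu_j$. The skew cross terms $i\mu_j(\ol z_jw_j-z_j\ol w_j)$ are then the unique addition compatible with $\varphi_-(x,y)=-\ol\varphi_-(y,x)$ and with $\varphi_-(x,y)=0$ exactly when $x=y$ near the diagonal. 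Finally, the order-one-in-$x_{2n+1}$ corrections $\sum_{j>d}b_jx_{2n+1}dx_j$ in \eqref{e-gue161219} enter the eikonal equation linearly and yield the $b_{d+j}$ and $b_{2j-1},b_{2j}$ terms, while the remaining freedom along the Reeb direction collects into $(x_{2n+1}-y_{2n+1})f(x,y)$, with $f(0,0)=0$ forced by re-imposing $d\varphi_-\big|_{x=y}=\mp\omega_0$.

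The main obstacle is the second-order analysis of the eikonal equation in the presence of mixed signature $(n_-,n_+)$: its characteristic variety $\set{\lambda\omega_0}$ is degenerate, so one cannot simply invert a symbol, and one must instead integrate the complex Hamilton--Jacobi equation while tracking the holomorphic and antiholomorphic components separately according to the sign of each $\mu_j$. Verifying that the positivity constraint ${\rm Im\,}\varphi_-\geq0$ is simultaneously consistent with the null condition and actually forces the choice of $\abs{\mu_j}$ and the precise skew terms is the delicate point; the bookkeeping is made heavier by the fact that, in the group-adapted frame \eqref{e-gue161202I}, the first $d$ holomorphic directions $Z_1,\dots,Z_d$ are tied to $\underline{\mathfrak g}$ and must be treated on the same footing as the remaining $Z_{d+1},\dots,Z_n$, which is exactly where sign errors are easiest to make.
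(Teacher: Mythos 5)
First, note what the paper actually does with this statement: it gives no proof at all, quoting it verbatim from \cite[Theorem 3.7]{HH}; the real work behind it is the phase computation of \cite{Hsiao08}, \cite{HM14}, where $\varphi_-$ is produced by solving the characteristic (eikonal) equation for $\Box^{(q)}_b$ via the almost-analytic machinery of Melin--Sj\"ostrand \cite{MS74}. Your outline correctly identifies that strategy --- the diagonal normalizations \eqref{e-gue140205IV} plus the eikonal equation, with ${\rm Im\,}\varphi_-\geq 0$ selecting the branch --- and your treatment of the linear terms and of the pure-$x$, pure-$y$ quadratic terms by matching $d_x\varphi_-(x,x)=-\omega_0(x)$ against \eqref{e-gue161219} is sound.

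However, there are genuine gaps. The decisive step --- solving the complex eikonal equation to second order so as to produce $i\sum_j\abs{\mu_j}\abs{z_j-w_j}^2$ and the $b_j$-terms of \eqref{e-gue161222} --- is asserted rather than carried out, and it cannot be bypassed: for any small real $c$ with $\abs{\mu_j}+c>0$, replacing $i\abs{\mu_j}\abs{z_j-w_j}^2$ by $i(\abs{\mu_j}+c)\abs{z_j-w_j}^2$ preserves every condition you invoke (the diagonal normalizations, the symmetry $\varphi_-(x,y)=-\ol\varphi_-(y,x)$, ${\rm Im\,}\varphi_-\geq0$, and $\varphi_-=0$ iff $x=y$), so these conditions alone do not determine the Hermitian part; only the eikonal equation does. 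Second, your uniqueness argument for the skew terms is incorrect: $ci(\ol z_jw_j-z_j\ol w_j)$ is real-valued and antisymmetric under $(x,y)\mapsto(y,x)$, hence compatible with $\varphi_-(x,y)=-\ol\varphi_-(y,x)$ and with the vanishing condition for \emph{every} real $c$; what forces $c=\mu_j$ is matching the skew-symmetric part of the coefficient matrix of $d_x\varphi_-(x,x)$ with the linear terms of $\omega_0$ in \eqref{e-gue161219} --- the same diagonal mechanism you used for the real quadratic terms, not symmetry plus vanishing. Third, your claim that the residual ambiguity of the phase is exactly $(x_{2n+1}-y_{2n+1})f(x,y)+O(\abs{(x,y)}^3)$ is itself a nontrivial statement about equivalence of phase functions (in the sense of Melin--Sj\"ostrand) which you use without justification. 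To close these gaps you would either have to carry out the Hamilton--Jacobi computation of \cite{Hsiao08} in the coordinates of Theorem~\ref{t-gue161202}, or do what the paper does and cite \cite[Theorem 3.7]{HH}.
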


We now study $S^{(q)}_k(x,y)$. From Theorem~\ref{t-gue161109} and Lemma \ref{l-112}, we get 

\begin{theorem}\label{t-gue170112}
	Assume that $q\notin\set{n_-,n_+}$.Then, $S^{(q)}_k\equiv0$ on $X$. 
\end{theorem}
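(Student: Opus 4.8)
The plan is to reduce the claim entirely to Theorem~\ref{t-gue161109} together with the integral representation of Lemma~\ref{l-112}; no microlocal analysis is needed in this case. First I would invoke Theorem~\ref{t-gue161109}: since $q\notin\set{n_-,n_+}$, the ordinary Szeg\H{o} projection satisfies $S^{(q)}\equiv0$ on $X$. By the definition of $\equiv$ recalled around \eqref{e-gue160507f}, this means precisely that $S^{(q)}$ is a smoothing operator, i.e.\ its distribution kernel is in fact smooth, $S^{(q)}(x,y)\in C^\infty(X\times X,T^{*0,q}X\boxtimes(T^{*0,q}X)^*)$.

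Next I would feed this regularity into the formula of Lemma~\ref{l-112},
\[
S^{(q)}_k(x,y)=d_k\int_G S^{(q)}(g\circ x, y)\ol{\chi_k(g)}\,d\mu(g).
\]
The essential observation is that the action map $G\times X\to X$, $(g,x)\mapsto g\circ x$, is smooth, so the pulled-back object $(g,x,y)\mapsto S^{(q)}(g\circ x,y)$, including the action of $g^*$ on the bundle factor $T^{*0,q}X$, is a smooth section on $G\times X\times X$. The character $\chi_k$ is smooth on $G$ and $d_k$ is a constant.

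Then I would argue that the integral inherits this smoothness. Since $G$ is compact, hence of finite Haar measure, and the integrand is a smooth bundle section whose $x$- and $y$-derivatives of every order depend continuously on $g$, one may differentiate under the integral sign to arbitrary order, with bounds uniform on compact subsets of $X\times X$. It follows that $S^{(q)}_k(x,y)\in C^\infty(X\times X,T^{*0,q}X\boxtimes(T^{*0,q}X)^*)$, which is exactly the assertion $S^{(q)}_k\equiv0$ on $X$.

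The only point requiring genuine care is bookkeeping rather than conceptual: one must track how the pull-back $g^*$ acts on the bundle factor $T^{*0,q}X\boxtimes(T^{*0,q}X)^*$ and confirm that the composite is a bona fide smooth bundle section in the three variables $(g,x,y)$ jointly, so that the differentiation-under-the-integral argument genuinely applies to the bundle-valued kernel. Given the smoothness of the $G$-action and the fact that $S^{(q)}(x,y)$ is already a smooth section (not merely a distribution), this presents no real obstacle; the substance of the theorem lies entirely in the input from Theorem~\ref{t-gue161109}, and this $q\notin\set{n_-,n_+}$ case is the easy one, in contrast to the $q\in\set{n_-,n_+}$ analysis carried out subsequently.
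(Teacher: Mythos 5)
Your proposal is correct and follows exactly the paper's route: the paper derives Theorem~\ref{t-gue170112} precisely "from Theorem~\ref{t-gue161109} and Lemma~\ref{l-112}", i.e.\ from the vanishing $S^{(q)}\equiv0$ for $q\notin\set{n_-,n_+}$ combined with the integral formula \eqref{e-112}. You have merely made explicit the routine verification (smoothness of the pulled-back kernel in $(g,x,y)$ and differentiation under the integral over the compact group $G$) that the paper leaves implicit.
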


Assume that $q=n_-$ and $\Box^{(q)}_b$ has $L^2$ closed range. Fix $p\in\mu^{-1}(0)$ and let $v=(v_1,\ldots,v_d)$ and $x=(x_1,\ldots,x_{2n+1})$ be the local coordinates of $G$ and $X$ as in Theorem~\ref{t-gue161202}. 
Assume that $d\mu=m(v)dv=m(v_1,\ldots,v_d)dv_1\cdots dv_d$ on $V$, where $V$ is an open neighborhood of $e_0\in G$ as in Theorem~\ref{t-gue161202}. From Lemma \ref{l-112}, we have 
\begin{equation}\label{e-1171}
S^{(q)}_k(x,y)=d_k\int_G\chi(g)\ol{\chi_k(g)}S^{(q)}(g\circ x, y)d\mu(g)+d_k\int_G(1-\chi(g))\ol{\chi_k(g)}S^{(q)}(g\circ x,y)d\mu(g),
\end{equation}
where $\chi\in C^\infty_0(V)$, $\chi=1$ near $e_0$. 

Assume first $G$ is globally free on $\mu^{-1}(0)$, 
if $U$ and $V$ are small, there is a constant $c>0$ such that 
\begin{equation}\label{e-gue161231cr}
d(g\circ x,y)\geq c,\ \ \forall x, y\in U, g\in{\rm Supp\,}(1-\chi),
\end{equation}
where $U$ is an open set of $p\in\mu^{-1}(0)$ as in Theorem~\ref{t-gue161202}. From now on, we take $U$ and $V$ small enough so that \eqref{e-gue161231cr} holds. By Theorem~\ref{t-gue161109I}, $S^{(q)}(x,y)$ is smoothing away from diagonal. From this observation and \eqref{e-gue161231cr}, we have $\int_G(1-\chi(g))\ol{\chi_k(g)}S^{(q)}(g\circ x, y)d\mu(g)\equiv0$ on $U$ 
and hence 
\begin{equation}\label{e-gue170102}
\mbox{$S^{(q)}_k(x,y)\equiv d_k\int_G\chi(g)\ol{\chi_k(g)}S^{(q)}(g\circ x, y)d\mu(g)$ on $U$}.
\end{equation}
From Theorem~\ref{t-gue161109I} and \eqref{e-gue170102}, we have 
\begin{equation}\label{e-gue170102I}
\begin{split}
&\mbox{$S^{(q)}_k(x,y)\equiv \hat S_{k,-}(x,y)+\hat S_{k,+}(x,y)$ on $U$},\\
&\hat S_{k,-}(x,y)=d_k\int_G\chi(g)\ol{\chi_k(g)}S_-(g\circ x,y)d\mu(g),\\
&\hat S_{k,+}(x,y)=d_k\int_G\chi(g)\ol{\chi_k(g)}S_+(g\circ x,y)d\mu(g).
\end{split}
\end{equation}
%Write $x=(x',x'')=(x',\hat x'',\Td x'')$, $y=(y',y'')=(y',\hat y'',\Td y'')$, where $\hat x''=(x_{d+1},\ldots,x_{2d})$, $\hat y''=(y_{d+1},\ldots,y_{2d})$, $\Td x''=(x_{2d+1},\ldots,x_{2n+1})$, $\Td y''=(y_{2d+1},\ldots,y_{2n+1})$. 
More precisely,
\begin{equation}\label{e-1127}
\hat S_{k,-}(x,y)\equiv d_k\int_0^{\infty}\int_V
e^{i\phi_-(v\circ x,y)t}\chi(v)\ol{\chi_k(v)}s_-(v\circ x,y,t)m(v)dvdt.
\end{equation}

By using stationary phase formula of Melin-Sj\"ostrand~\cite{MS74}, it follows from the arguments in \cite[Section 3.3]{HH} that
\begin{equation}\label{e-gue170102cw}
\hat S_{k,-}(x,y)\equiv\int e^{i\Phi_{k,-}(x,y)t}a_{k,-}(x,y,t)dt\ \ \ \mbox{on $U$},
\end{equation}
where $a_{k,-}(x,y,t)\sim\sum^\infty_{j=0}t^{n-\frac{d}{2}-j}a^j_{k,-}(x,y)$ in $S^{n-\frac{d}{2}}_{1,0}(U\times U\times\Real_+, T^{*0,q}X\boxtimes(T^{*0,q}X)^*)$, 
\[a^j_{k,-}(x,y)\in C^\infty(U\times U,T^{*0,q}X\boxtimes(T^{*0,q}X)^*),\ \ j=0,1,\ldots,\]
%\begin{equation}\label{e-gue170102cwI}
%a^0_{k,-}(p,p)=d_k m(0)\pi^{-n-1+\frac{d}{2}}\abs{\mu_1}^{\frac{1}{2}}\cdots\abs{\mu_d}^{\frac{1}{2}}\abs{\mu_{d+1}}\cdots\abs{\mu_n}\tau_{p,n_-}.
%\end{equation}

In this work, $G$ acts locally free on $\mu^{-1}(0)$ under Assumption \ref{a-gue170123II}. Let $N_p=\{g\in G: g\circ p=p\}=\{g_1=e_0,g_2...,g_r\}$.
Similarly to \eqref{e-gue161231cr},
we can choose $U$ and $V$ to be small such that 
the subsets $\{g_j V\}_{\alpha=1}^r$ are mutually disjoint and
there is a constant $c>0$ satisfying 
\begin{equation}\label{e-11271}
d(h\circ x,y)\geq c,\ \ \forall x, y\in U, h\in{\rm Supp\,}(1-\sum_{\alpha=1}^r \chi(gg_\alpha^{-1})).
\end{equation}
Then on $U$
\begin{equation}\label{e-11272}
\begin{split}
S^{(q)}_k(x,y)&\equiv d_k\sum_{\alpha=1}^r\int_G\chi(gg_\alpha^{-1})\ol{\chi_k(g)}S^{(q)}(g\circ x, y)d\mu(g)\\
&=d_k\sum_{\alpha=1}^r\int_G\chi(g)\ol{\chi_k(gg_\alpha)}S^{(q)}(gg_\alpha\circ x, y)d\mu(g).
\end{split}
\end{equation}
\begin{equation}\label{e-11273}
\hat S_{k,-}(x,y)\equiv d_k\sum_{\alpha=1}^r\int_0^{\infty}\int_V
e^{i\phi_-(vv_\alpha\circ x,y)t}\chi(v)\ol{\chi_k(vv_\alpha)}s_-(vv_\alpha\circ x,y,t)m(v)dvdt,
\end{equation}
where $v_\alpha$ is the coordinate of $g_\alpha$.
By using stationary phase formula of Melin-Sj\"ostrand~\cite{MS74}, it follows from the above argument that
\begin{equation}\label{e-11274}
\hat S_{k,-}(x,y)\equiv\sum_{\alpha=1}^r\int e^{i\Phi_{k,-}(v_\alpha\circ x,y)t}a_{k,\alpha,-}(x,y,t)dt\ \ \ \mbox{on $U$},
\end{equation}
where $a_{k,\alpha,-}(x,y,t)\sim\sum^\infty_{j=0}t^{n-\frac{d}{2}-j}a^j_{k,\alpha,-}(x,y)$ in $S^{n-\frac{d}{2}}_{1,0}(U\times U\times\Real_+, T^{*0,q}X\boxtimes(T^{*0,q}X)^*)$, 
\[a^j_{k,\alpha,-}(x,y)\in C^\infty(U\times U,T^{*0,q}X\boxtimes(T^{*0,q}X)^*),\ \ j=0,1,\ldots,\]

In particular, it follows from the arguments in \cite[Subsection 3.3]{HH} with minor modification, if $G$ acts freely on $\mu^{-1}(0)$, then for $a^0_{k, -}(x,y)$ and $a^0_{k, +}(x,y)$ in \eqref{e-gue170108wrIIIm}, we have 
\[
%\begin{equation}\label{e-gue170128}
\begin{split}
a^0_{k, -}(x,x)=2^{d-1}\frac{d^2_k}{V_{{\rm eff\,}}(x)}\pi^{-n-1+\frac{d}{2}}\abs{\det R_x}^{-\frac{1}{2}}\abs{\det\mathcal{L}_{x}}\tau_{x,n_{-}},\ \ \forall x\in\mu^{-1}(0)\\
a^0_{k, +}(x,x)=2^{d-1}\frac{d^2_k}{V_{{\rm eff\,}}(x)}\pi^{-n-1+\frac{d}{2}}\abs{\det R_x}^{-\frac{1}{2}}\abs{\det\mathcal{L}_{x}}\tau_{x,n_{+}},\ \ \forall x\in\mu^{-1}(0).
\end{split}
%\end{equation}
\]

\subsection{$G$-equivariant Szeg\H{o} kernel asymptotics away $\mu^{-1}(0)$}\label{s-gue170110}

The goal of this section is to prove the following 

\begin{theorem}\label{t-gue170110}
	Let $D$ be an open neighborhood of $X$ with $D\bigcap\mu^{-1}(0)=\emptyset$. Then, 
	\[S^{(q)}_k\equiv0\ \ \mbox{on $D$}.\]
\end{theorem}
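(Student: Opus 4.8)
The plan is to reduce the statement to a non-stationary phase estimate, using the integral representation of $S^{(q)}_k$ from Lemma~\ref{l-112} together with the Boutet de Monvel--Sj\"ostrand description of $S^{(q)}$ in Theorem~\ref{t-gue161109I}. First, if $q\notin\set{n_-,n_+}$ then $S^{(q)}_k\equiv0$ on all of $X$ by Theorem~\ref{t-gue170112}, so there is nothing to prove; hence I may assume $q=n_-$ or $q=n_+$, and by symmetry I treat $q=n_-$. By Lemma~\ref{l-112},
\[S^{(q)}_k(x,y)=d_k\int_G S^{(q)}(g\circ x,y)\,\ol{\chi_k(g)}\,d\mu(g),\]
and by Theorem~\ref{t-gue161109I} the kernel $S^{(q)}(x',y')$ is smooth off the diagonal. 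Thus, for $(x,y)$ near a fixed point $(x_0,y_0)\in D\times D$, the only possible singular contributions to the integral come from group elements $g$ lying near the compact set $\Sigma=\set{g\in G:\ g\circ x_0=y_0}$; away from a neighborhood of $\Sigma$ the integrand is smooth, uniformly for $(x,y)$ in a small neighborhood of $(x_0,y_0)$ (as in the distance estimate \eqref{e-gue161231cr}), so it may be differentiated under the integral sign.

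Next I would localize near each $g_0\in\Sigma$: choosing local coordinates $v=(v_1,\dots,v_d)$ on $G$ and local coordinates on $X$ around $y_0=g_0\circ x_0$, and inserting the oscillatory-integral representation of Theorem~\ref{t-gue161109I}, the contribution of this piece to $S^{(q)}_k$ becomes, modulo smoothing,
\[d_k\int_0^\infty\!\!\int_G \theta(g)\,e^{i\varphi_-(g\circ x,y)t}\,\ol{\chi_k(g)}\,s_-(g\circ x,y,t)\,d\mu(g)\,dt,\]
with $\theta\in C^\infty_0$ a cutoff supported near $g_0$. The extra factor $\ol{\chi_k(g)}$ is a smooth amplitude that plays no role in the phase analysis; consequently this is precisely the type of oscillatory integral treated by the Melin--Sj\"ostrand stationary phase method as in~\cite[Section~3.3]{HH}, and the whole analytic content is the $G$-invariant computation carried out there with $\ol{\chi_k(g)}$ carried along harmlessly.

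The heart of the argument is to show that the phase $\Psi(g,t)=\varphi_-(g\circ x,y)t$ has no real critical point in $(g,t)$ when $y\notin\mu^{-1}(0)$. Stationarity in $t$ forces $\varphi_-(g\circ x,y)=0$, which by \eqref{e-gue140205IV} (with ${\rm Im\,}\varphi_-\geq0$) holds only when $g\circ x=y$. Stationarity in $g$ requires $d_x\varphi_-(g\circ x,y)$ to annihilate every derivative $\pr_{v_j}(v\circ x)$, and these span the orbit tangent space $\underline{\mathfrak{g}}_{g\circ x}$. At a would-be critical point $g\circ x=y$ we have $d_x\varphi_-(y,y)=-\omega_0(y)$ by \eqref{e-gue140205IV}, so stationarity is equivalent to $\omega_0(y)$ annihilating $\underline{\mathfrak{g}}_y$, i.e.\ $\langle\mu(y),\xi\rangle=\omega_0(\xi_X(y))=0$ for all $\xi\in\mathfrak{g}$, i.e.\ $\mu(y)=0$. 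Since $y\in D$ and $D\cap\mu^{-1}(0)=\emptyset$, no critical point exists, so integration by parts in $g$ (each step gaining a factor $t^{-1}$ against a symbol of finite order in $t$) makes the $t$-integral rapidly convergent and the localized piece smoothing. The $q=n_+$ case is identical using $\varphi_+=-\ol\varphi_-$, for which $d_x\varphi_+(y,y)=\omega_0(y)$ yields the same pairing with $\mu(y)$. Summing the finitely many local pieces over a cover of $\Sigma$ and over a finite cover of $D$ then gives $S^{(q)}_k\equiv0$ on $D$. I expect the main obstacle to be bookkeeping rather than ideas: one must check carefully that the $g$-derivative of $\varphi_-(g\circ x,y)$ genuinely reproduces the moment map through the orbit directions $\underline{\mathfrak{g}}_y$, and that the cutoffs $\theta$ and the coordinate patches can be assembled so that the non-stationary phase estimate holds uniformly in $(x,y)$ near $(x_0,y_0)$.
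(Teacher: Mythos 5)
Your proposal is correct, and its analytic core --- non-stationary phase for the phase $t\varphi_-(g\circ x,y)$, with the $g$-derivative at coincidence points $g\circ x=y$ reproducing the moment-map pairing $\langle\mu(y),\xi\rangle=\omega_0(\xi_X(y))$ --- is exactly the mechanism the paper's proof rests on; the difference is in organization and in how much is made explicit. The paper never carries out the phase analysis itself: it isolates it in Lemma~\ref{l-gue170110} (the case of $g$ near the identity), whose proof is cited from \cite[Lemma 3.14]{HH} with the factor $\ol{\chi_k(g)}$ carried along; it then reduces a neighborhood of an arbitrary $h\in G$ to a neighborhood of $e$ via the substitution $g\mapsto h\circ g$ and invariance of the Haar measure (Lemma~\ref{l-gue170111}), absorbing $\ol{\chi_k(h\circ g)}$ into the cutoff, and concludes by covering the compact group $G$ with finitely many such neighborhoods and summing a partition of unity against $d_k\int_G S^{(q)}(g\circ x,y)\ol{\chi_k(g)}d\mu(g)$. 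You instead localize over the coincidence set $\Sigma=\set{g\in G:\ g\circ x_0=y_0}$ and run the non-stationary phase argument directly at each $g_0\in\Sigma$, with no translation step. Both decompositions succeed: the paper's route buys brevity (the hard estimate is a citation) and only ever needs the phase computation near $e$; yours is self-contained, makes the geometry explicit, and incidentally shows that local freeness of the action plays no role here --- if some $\xi_X(y)$ degenerates, the corresponding pairings $\langle\mu(y),\xi\rangle$ vanish automatically, and $y\notin\mu^{-1}(0)$ still supplies some $\xi$ with $\omega_0(\xi_X(y))\neq0$, which is all the integration by parts requires. The one point you must still nail down (and correctly flag) is the uniform bookkeeping for the complex phase: on the support of each cutoff one needs either $d(g\circ x,y)\geq c$ (so the inner kernel is smooth, or equivalently $\varphi_-\neq0$ permits integration by parts in $t$) or $\abs{d_g\varphi_-}\geq c$ (integration by parts in $g$, gaining $t^{-1}$ per step against a symbol of finite order), and compactness of $\Sigma$ together with continuity of these lower bounds furnishes the required finite cover, uniformly for $(x,y)$ near $(x_0,y_0)$.
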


We first need 
\begin{lemma}\label{l-gue170110}
	Let $p\notin\mu^{-1}(0)$. Then, there are open neighborhoods $U$ of $p$ and $V$ of $e\in G$ such that for any $\chi\in C^\infty_0(V)$, we have for every $k$,
	\begin{equation}\label{e-gue170110}
	\int_GS^{(q)}(x,g\circ y)\chi(g)\ol{\chi_k(g)}d\mu(g)\equiv0\ \ \mbox{on $U$}.
	\end{equation}
\end{lemma}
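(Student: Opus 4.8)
The plan is to reduce the claim to a non-stationary phase estimate in the group variable, where the non-stationarity is precisely the content of the hypothesis $p\notin\mu^{-1}(0)$. First I would observe that $\overline{\chi_k(g)}$ is smooth and bounded, so it plays no essential role, and the problem is to show that $\int_G S^{(q)}(x,g\circ y)\chi(g)\,d\mu(g)$ is smoothing on $U$. Since $\chi$ is supported in a small neighborhood $V$ of $e$, the point $g\circ y$ stays close to $y$ throughout the integration; combined with the fact (Theorem~\ref{t-gue161109I}) that $S^{(q)}(x,z)$ is smooth off the diagonal, the only possible contribution to the singularities comes from $x$ close to $y$, both close to $p$. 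On that region I would insert the oscillatory integral representation of Theorem~\ref{t-gue161109I}, reducing the kernel modulo smoothing to a sum of terms of the form
\[
\int_V\int_0^\infty e^{i\varphi_-(x,g(v)\circ y)t}\,s_-(x,g(v)\circ y,t)\,\chi(v)\overline{\chi_k(v)}\,m(v)\,dt\,dv,
\]
together with the analogous $\varphi_+$ term, where $v=(v_1,\dots,v_d)$ are the local coordinates on $G$ near $e$ and $d\mu=m(v)\,dv$.

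The decisive point is the interaction of the phase with the group action through the moment map. Differentiating in $v$ and using $d_y\varphi_-(x,y)|_{x=y}=\omega_0(x)$ from \eqref{e-gue140205IV}, I would compute at $v=0$ on the diagonal $x=y$,
\[
\frac{\partial}{\partial v_j}\varphi_-(x,g(v)\circ y)\Big|_{v=0,\,x=y}=\big\langle\,\omega_0(x)\,,\,(\xi_j)_X(x)\,\big\rangle=\langle\mu(x),\xi_j\rangle,
\]
where $\xi_j\in\mathfrak{g}$ is the generator corresponding to $\partial/\partial v_j$. Since $p\notin\mu^{-1}(0)$ means $\mu(p)\neq0$, at least one such component is nonzero at $p$; after shrinking $U$ and $V$, the $v$-gradient $d_v\varphi_-(x,g(v)\circ y)$ stays bounded away from $0$ on a neighborhood of the diagonal, i.e.\ wherever ${\rm Im\,}\varphi_-$ is small. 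Geometrically this says that $(\xi_j)_X$ has a nonzero Reeb ($T$-direction) component near $p$, which is paired nontrivially by $d_y\varphi_-\approx\omega_0$. The same computation with $\varphi_+=-\overline{\varphi_-}$ gives $-\langle\mu(x),\xi_j\rangle$, so the $\varphi_+$ term is handled identically.

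With this non-degeneracy in hand I would finish by a standard splitting. Introducing a cutoff in ${\rm Im\,}\varphi_-$, on the region ${\rm Im\,}\varphi_-(x,g\circ y)\geq\delta>0$ the factor $e^{-t\,{\rm Im\,}\varphi_-}$ gives exponential decay in $t$, so the $t$-integral and all its derivatives converge to a smooth function. On the complementary region ${\rm Im\,}\varphi_-<\delta$, where $d_v\varphi_-\neq0$, I would repeatedly integrate by parts in $v$ using the transpose of the operator $L=(it|d_v\varphi_-|^{2})^{-1}\sum_j\overline{\partial_{v_j}\varphi_-}\,\partial_{v_j}$, which satisfies $L(e^{it\varphi_-})=e^{it\varphi_-}$; each application gains a factor $t^{-1}$ with coefficients bounded together with all their derivatives, so after $N$ steps the integrand is $O(t^{n-N})$ and the $t$-integral converges to a function of class $C^{N-n-1}$. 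Letting $N\to\infty$ shows each term is smoothing, which is exactly the approach carried out in the $G$-invariant setting in \cite[Section~3]{HH}.

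The main obstacle I expect is making the integration by parts genuinely uniform: one must verify that $d_v\varphi_-\neq0$ holds not merely on the diagonal but throughout the region $\{{\rm Im\,}\varphi_-<\delta\}$, which is where any real critical points could hide, and that all $x$- and $y$-derivatives falling on the amplitude $s_-$ and on $L$ remain controlled by powers of $t$ consistent with the symbol estimates of Definition~\ref{d-gue140221a}. The moment-map identity controls the diagonal directly; propagating it to the full small-${\rm Im\,}$ region requires the precise normal form of $\varphi_-$ in Theorem~\ref{t-gue161222}, namely that ${\rm Im\,}\varphi_-$ small forces the horizontal coordinates of $x$ and $g\circ y$ to agree, so that only the Reeb component of $(\xi_j)_X$—the one detected by $\mu$—survives in the pairing with $d_y\varphi_-$.
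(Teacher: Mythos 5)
Your proof is correct and takes essentially the same approach as the paper: the paper disposes of this lemma by citing \cite[Lemma 3.14]{HH} (observing that the extra smooth factor $\overline{\chi_k(g)}$ is harmless), and that cited argument is precisely your non-stationary phase computation, in which $d_y\varphi_-(x,y)|_{x=y}=\omega_0(x)$ identifies the $v$-derivative of the phase on the diagonal with $\langle\mu(x),\xi_j\rangle\neq 0$, so that integration by parts in the group variables (legitimate for complex phases with ${\rm Im}\,\varphi_-\geq 0$) yields smoothness. As a minor simplification, since $\partial_{v_j}\varphi_-(x,g(v)\circ y)$ is continuous and nonvanishing at $(x,y,v)=(p,p,0)$, shrinking both $U$ and $V$ already gives a uniform lower bound on all of $U\times U\times V$, so your more delicate splitting according to the size of ${\rm Im}\,\varphi_-$ is not actually needed.
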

The proof of the above lemma follows from \cite[Lemma 3.14]{HH} by adding $\ol{\chi_k(g)}$.

\begin{lemma}\label{l-gue170111}
	Let $p\notin\mu^{-1}(0)$ and let $h\in G$. We can find open neighborhoods $U$ of $p$ and $V$ of $h$ such that for every $\chi\in C^\infty_0(V)$, we have for every $k$,
	\[\int_GS^{(q)}(g\circ x,y)\chi(g) \ol{\chi_k(g)} d\mu(g)\equiv0\ \ \mbox{on $U$}.\]
\end{lemma}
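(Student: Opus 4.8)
The plan is to follow the scheme of Lemma~\ref{l-gue170110} (that is, \cite[Lemma 3.14]{HH}), adapted to the facts that $G$ now acts on the first variable and that $V$ is a neighborhood of an arbitrary $h\in G$. Since the statement is local in $(x,y)$ near $(p,p)$ and we may freely shrink $U$ and $V$, I would first reduce to a stationary-phase estimate. The points $g\circ x$ (with $g\in\operatorname{supp}\chi\subset V$ and $x\in U$) lie near $h\circ p$, while $y$ lies near $p$. Taking a coordinate patch containing both $p$ and $h\circ p$ and invoking the Boutet de Monvel--Sj\"ostrand description $S^{(q)}\equiv S_-+S_+$ of Theorem~\ref{t-gue161109I}, it suffices to treat
\[
\int_G S_-(g\circ x,y)\chi(g)\overline{\chi_k(g)}\,d\mu(g)\equiv\int_0^\infty\!\!\int_G e^{i\varphi_-(g\circ x,y)t}\chi(g)\overline{\chi_k(g)}\,s_-(g\circ x,y,t)\,d\mu(g)\,dt
\]
and the analogous expression with $S_+$, where $\varphi_-$ and $s_-$ are as in Theorem~\ref{t-gue161109I}.

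The key step is to show the right-hand side is smoothing in $(x,y)$. Combining integration by parts in $t$ (valid where $\operatorname{Re}\varphi_-\neq0$), integration by parts in $g$ (valid where $d_g\varphi_-\neq0$), and the exponential decay of $e^{i\varphi_- t}$ where $\operatorname{Im}\varphi_->0$, the integral is $O(t^{-\infty})$ --- hence the kernel is smooth --- provided there is no $g\in\operatorname{supp}\chi$ at which $\varphi_-(g\circ x,y)=0$ and $d_g\bigl(\varphi_-(g\circ x,y)\bigr)=0$ simultaneously. By \eqref{e-gue140205IV} we have $\varphi_-(w,y)=0$ if and only if $w=y$, so the first condition forces $g\circ x=y$; in particular this can occur only when $h\circ p=p$, and otherwise the claim is immediate. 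At a point with $g\circ x=y$, I would differentiate the phase along $\xi\in\mathfrak g$ through $g\mapsto\exp(s\xi)g$: since $\frac{d}{ds}\big|_{s=0}(\exp(s\xi)g)\circ x=\xi_X(g\circ x)$ and $d_w\varphi_-(w,y)\big|_{w=y}=-\omega_0(y)$ by \eqref{e-gue140205IV}, the directional derivative equals $\langle-\omega_0(y),\xi_X(y)\rangle=-\omega_0(\xi_X(y))=-\langle\mu(y),\xi\rangle$ by \eqref{E:cmpm}. Because $p\notin\mu^{-1}(0)$, for $U$ small enough we have $\mu(y)\neq0$ for all $y\in U$, so some $\xi$ makes this nonzero; thus $d_g\varphi_-(g\circ x,y)\neq0$ at every such point and the obstruction cannot arise.

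Finally, a continuity-and-compactness argument lets us choose $U$ and $V$ small enough that the above estimates are uniform in $(x,y)$, so that after integrating in $t$ we obtain a smooth kernel; the same reasoning applies to the $S_+$ term using $-\overline{\varphi_+}=\varphi_-$ from \eqref{e-gue140205IV}. I expect the main obstacle to be the routine but careful bookkeeping in the integrations by parts: one differentiates the amplitude $s_-$ (a symbol of order $n$ in $t$) and the Haar density together while tracking that each step lowers the $t$-order by one, uniformly on the shrunken patch. The conceptual content, however, is entirely in the moment-map identity $d_g\varphi_-(g\circ x,y)\big|_{g\circ x=y}=-\langle\mu(y),\cdot\rangle$, which is precisely where the hypothesis $p\notin\mu^{-1}(0)$ enters.
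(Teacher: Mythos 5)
Your proposal is correct in its essentials, but it takes a genuinely different route from the paper. The paper's own proof is purely formal and only a few lines long: it sets $\hat V=hV$, performs the change of variables $g\mapsto h\circ g$ in the Haar integral (using left-invariance of $d\mu$), and absorbs the translated factors into a new cutoff $\chi(g):=\hat\chi(h\circ g)\,\ol{\chi_k(h\circ g)}\in C^\infty_0(V)$ supported near the identity; the conclusion is then obtained from Lemma~\ref{l-gue170110} (itself quoted from \cite[Lemma 3.14]{HH}), applied together with the $G$-invariance of the Szeg\H{o} kernel (equivalently, a first-variable analogue of that lemma). No phase analysis is redone at all, and the factor $\ol{\chi_k}$ plays no role beyond being part of the cutoff. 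You, by contrast, unfold the analytic content of the cited lemma: you rerun the Boutet de Monvel--Sj\"ostrand description and a non-stationary phase argument directly for $\int_G S^{(q)}(g\circ x,y)\chi(g)\ol{\chi_k(g)}\,d\mu(g)$ with $g$ near an arbitrary $h$, observing that the critical set $g\circ x=y$ can only meet the support when $h\circ p=p$ (the other case being trivial by off-diagonal smoothness), and computing $d_g\varphi_-(g\circ x,y)\big|_{g\circ x=y}=-\langle\mu(y),\cdot\rangle$, which is nonzero near $p\notin\mu^{-1}(0)$. That key identity, its derivation from \eqref{e-gue140205IV} and \eqref{E:cmpm}, and the treatment of $S_+$ via $-\ol\varphi_+=\varphi_-$ are all correct, and they are exactly the mechanism underlying \cite[Lemma 3.14]{HH}. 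What the paper's route buys is brevity and no duplication of microlocal analysis: the group structure transports the identity-neighborhood statement to a neighborhood of any $h$. What your route buys is self-containedness and transparency: it makes explicit where the hypothesis $p\notin\mu^{-1}(0)$ enters and does not require quoting the earlier lemma or the invariance of $S^{(q)}$; the cost is that you must carry out the uniform integration-by-parts bookkeeping for a complex phase with ${\rm Im\,}\varphi_-\geq0$, which you only sketch, and which the paper deliberately avoids repeating.
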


\begin{proof}
	Let $U$ and $V$ be open sets as in Lemma~\ref{l-gue170110}. Let $\hat V=hV$. Then, $\hat V$ is an open set of $G$. Let $\hat\chi\in C^\infty_0(\hat V)$. We have 
	\begin{equation}\label{e-gue170111}
	\begin{split}
	\int_GS^{(q)}(g\circ x,y)\hat\chi(g)\ol{\chi_k(g)}d\mu(g)&=\int_GS^{(q)}(h\circ g\circ x, y)\hat\chi(h\circ g)\ol{\chi_k(h\circ g)}d\mu(g)\\
	&=\int_GS^{(q)}(h\circ g\circ x,y)\chi(g)d\mu(g),
	\end{split}
	\end{equation}
	where $\chi(g):=\hat\chi(h\circ g)\ol{\chi_k(h\circ g)}\in C^\infty_0(V)$. From \eqref{e-gue170111} and Lemma~\ref{l-gue170110}, we deduce that 
	\[\int_GS^{(q)}(g\circ x,y)\hat\chi(g)\ol{\chi_k(g)}d\mu(g)\equiv0\ \ \mbox{on $U$}.\]
	The lemma follows. 
\end{proof}

\begin{proof}[Proof of Theorem~\ref{t-gue170110}]
	Fix $p\in D$. We need to show that $S^{(q)}_k$ is smoothing near $p$. 
	Let $h\in G$. By Lemma~\ref{l-gue170111}, we can find open sets $U_h$ of $p$ and $V_h$ of $h$ such that for every $\chi\in C^\infty_0(V_h)$, we have 
	\begin{equation}\label{e-gue170111c}
	\int_GS^{(q)}(g\circ x,y)\chi(g)\ol{\chi_k(g)}d\mu(g)\equiv0\ \ \mbox{on $U_h$}.
	\end{equation}
	Since $G$ is compact, we can find open sets $U_{h_j}$ and $V_{h_j}$, $j=1,\ldots,N$, such that $G=\bigcup^N_{j=1}V_{h_j}$. Let $U=D\bigcap\Bigr(\bigcap^N_{j=1}U_{h_j}\Bigr)$ and let $\tilde\chi_j\in C^\infty_0(V_{h_j})$, $j=1,\ldots,N$, with $\sum^N_{j=1}\tilde\chi_j=1$ on $G$. From \eqref{e-gue170111c}, we have 
	\begin{equation}\label{e-gue170111cI}
	\begin{split}
	S^{(q)}_k(x,y)&=d_k\int_G S^{(q)}(g\circ x, y)\ol{\chi_k(g)}d\mu(g)\\
	&=d_k\sum^N_{j=1}\int_GS^{(q)}(g\circ x,y)\tilde\chi_j(g)\ol{\chi_k(g)}d\mu(g)\equiv0\ \ \mbox{on $U$}.
	\end{split}
	\end{equation}
	The theorem follows. 
\end{proof}

From Section \ref{s-gue161110w} and Section \ref{s-gue170110}, we get Theorem~\ref{t-gue170124}. 

\section{$G$-equivariant Szeg\H{o} kernel asymptotics on CR manifolds with $S^1$ action}\label{s-gue170111}

Let $X$ admit an $S^1$ action $e^{i\theta}$: $S^1\times X\rightarrow X$. Let $T\in C^\infty(X, TX)$ be the global real vector field induced by the $S^1$ action given by
$(Tu)(x)=\frac{\partial}{\partial\theta}\left(u(e^{i\theta}\circ x)\right)|_{\theta=0}$, $u\in C^\infty(X)$. 

\begin{definition}\label{d-gue160502}
	The $S^1$ action $e^{i\theta}$ is CR if
	$[T, C^\infty(X, T^{1,0}X)]\subset C^\infty(X, T^{1,0}X)$ and the $S^1$ action is transversal if for each $x\in X$,
	$\Complex T(x)\oplus T_x^{1,0}X\oplus T_x^{0,1}X=\mathbb CT_xX$. Moreover, the $S^1$ action is locally free if $T\neq0$ everywhere. 
\end{definition}

Note that transversality implies local freeness. Let $(X, T^{1,0}X)$ be a compact connected CR manifold with a transversal CR $S^1$ action $e^{i\theta}$ and $T$ be the global vector field induced by the $S^1$ action. Let $\omega_0\in C^\infty(X,T^*X)$ be the global real one form determined by $\langle\,\omega_0\,,\,u\,\rangle=0$, for every $u\in T^{1,0}X\oplus T^{0,1}X$, and $\langle\,\omega_0\,,\,T\,\rangle=-1$. Note that $\omega_0$ and $T$ satisfy \eqref{e-gue170111ry}. Recall that we work with Assumption~\ref{a-gue170128}. 

Assume that the Hermitian metric $\langle\,\cdot\,|\,\cdot\,\rangle$ on $\Complex TX$ is $G\times S^1$ invariant.  Then the $L^2$ inner product $(\,\cdot\,|\,\cdot\,)$ on $\Omega^{0,q}(X)$ 
induced by $\langle\,\cdot\,|\,\cdot\,\rangle$ is $G\times S^1$-invariant. We then have 
\[\begin{split}
&Tg^*\ol{\pr}^*_b=g^*T\ol{\pr}^*_b=\ol{\pr}^*_bg^*T=\ol{\pr}^*_bTg^*\ \ \mbox{on $\Omega^{0,q}(X)$},\ \ \forall g\in G,\\
&Tg^*\Box^{(q)}_b=g^*T\Box^{(q)}_b=\Box^{(q)}_bg^*T=\Box^{(q)}_bTg^*\ \ \mbox{on $\Omega^{0,q}(X)$},\ \ \forall g\in G.
\end{split}\]

Let $L^2_{(0,q),m}(X)_k$ be
the completion of $\Omega^{0,q}_{m}(X)_k$ with respect to $(\,\cdot\,|\,\cdot\,)$. 
We write $L^2_{m}(X)_k:=L^2_{(0,0),m}(X)_k$. Put 
\[
H^q_{b, m}(X)_k:=({\rm Ker\,}\Box^{(q)}_{b})\bigcap L^2_{(0,q),m}(X)_k.
\]
%It is easy to see that, for every $m\in\mathbb Z$, $({\rm Ker\,}\Box^{(q)}_{b,k})_m\subset\Omega^{0,q}_{k,m}(X)$ and ${\rm dim\,}({\rm Ker\,}\Box^{(q)}_{b,k})_m<\infty$.
The $m$-th $G$-equivariant Szeg\H{o} projection is the orthogonal projection 
\[S^{(q)}_{k,m}:L^2_{(0,q)}(X)\To ({\rm Ker\,}\Box^{(q)}_{b})\bigcap L^2_{(0,q),m}(X)_k\]
with respect to $(\,\cdot\,|\,\cdot\,)$. Let $S^{(q)}_{k,m}(x,y)\in C^\infty(X\times X,T^{*0,q}X\boxtimes(T^{*0,q}X)^*)$ be the distribution kernel of $S^{(q)}_{k,m}$. 
Then
\begin{equation}\label{e-gue170111cr}
S^{(q)}_{k,m}(x,y)=\frac{1}{2\pi}\int^{\pi}_{-\pi}S^{(q)}_k(x,e^{i\theta}\circ y)e^{im\theta}d\theta.
\end{equation}
The goal of this section is to study the asymptotics of $S^{(q)}_{k,m}$ as $m\To+\infty$. 

From Theorem~\ref{t-gue170110}, \eqref{e-gue170111cr} and by using integration by parts several times, we get 

\begin{theorem}\label{t-gue170111w}
	Let $D\subset X$ be an open set with $D\bigcap\mu^{-1}(0)=\emptyset$. Then, 
	\[S^{(q)}_{k,m}=O(m^{-\infty})\ \ \mbox{on $D$}. \]
\end{theorem}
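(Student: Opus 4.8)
The plan is to combine the Fourier representation \eqref{e-gue170111cr} with the smoothing statement of Theorem~\ref{t-gue170110} and then integrate by parts in $\theta$. The first step is to record that $\mu^{-1}(0)$ is invariant under the circle action. Indeed, since the $S^1$ action is CR and transversal (Definition~\ref{d-gue160502}) it preserves $T^{1,0}X\oplus T^{0,1}X$ and fixes $T$, so $(e^{i\theta})^*\omega_0=\omega_0$; moreover, by the commutativity \eqref{e-gue170111ryII} in Assumption~\ref{a-gue170128} the fundamental vector fields $\xi_X$, $\xi\in\mathfrak g$, are $S^1$-invariant. Hence $\langle\mu(e^{i\theta}\circ x),\xi\rangle=\omega_0(\xi_X(e^{i\theta}\circ x))=\omega_0(\xi_X(x))=\langle\mu(x),\xi\rangle$, so $\mu\circ e^{i\theta}=\mu$ and $\mu^{-1}(0)$ is $S^1$-invariant. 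Applying Theorem~\ref{t-gue170110} to the open subset $D':=X\setminus\mu^{-1}(0)$, the kernel $S^{(q)}_k$ is smooth on $D'\times D'$.

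Next I would fix a compact set $K\Subset D\times D$ and estimate $\pr_x^\alpha\pr_y^\beta S^{(q)}_{k,m}(x,y)$ on $K$. For $(x,y)\in K$ and $\theta\in[-\pi,\pi]$ the pair $(x,e^{i\theta}\circ y)$ ranges in a fixed compact subset of $D'\times D'$, because $y\notin\mu^{-1}(0)$ together with the $S^1$-invariance just established forces the whole orbit $e^{i\theta}\circ y$ to stay off $\mu^{-1}(0)$. Consequently the integrand $(x,y,\theta)\mapsto S^{(q)}_k(x,e^{i\theta}\circ y)$ appearing in \eqref{e-gue170111cr} (with its value in the second slot transported back to $(T^{*0,q}_yX)^*$ by the pull-back $(e^{i\theta})^*$ acting smoothly on $(0,q)$-forms) is jointly smooth, is $2\pi$-periodic in $\theta$, and all of its derivatives are bounded uniformly in $(x,y)\in K$ by compactness of $X$ and of the parameter interval.

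Then I would write $e^{im\theta}=\tfrac{1}{im}\pr_\theta e^{im\theta}$ and integrate by parts $N$ times in $\theta$ in \eqref{e-gue170111cr}. At each step the boundary terms cancel by $2\pi$-periodicity of the integrand, and each integration produces a factor $m^{-1}$; the $x$- and $y$-derivatives are handled by first differentiating under the integral sign. This yields, for every $N\in\N$ and every $\alpha,\beta$, a bound $\abs{\pr_x^\alpha\pr_y^\beta S^{(q)}_{k,m}(x,y)}\le C_{N,\alpha,\beta,K}\,m^{-N}$ on $K$, which is precisely the assertion $S^{(q)}_{k,m}=O(m^{-\infty})$ on $D$.

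The only genuine content, and the step to carry out with care, is the joint smoothness and uniform control of the integrand and its $\theta$-derivatives: this rests entirely on the $S^1$-invariance of $\mu^{-1}(0)$ proved in the first step (so that the circle orbit of $y$ never meets the singular set) together with the local uniformity, over compact subsets of $D'\times D'$, of the off-diagonal smoothing estimates furnished by Theorem~\ref{t-gue170110}. Once this is in place the integration-by-parts argument is routine.
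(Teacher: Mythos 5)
Your proof is correct and follows essentially the same route as the paper, which simply combines Theorem~\ref{t-gue170110}, the Fourier formula \eqref{e-gue170111cr}, and repeated integration by parts in $\theta$. The one point you make explicit that the paper leaves implicit --- the $S^1$-invariance of $\mu^{-1}(0)$, needed so that the orbit $e^{i\theta}\circ y$ stays inside the region where $S^{(q)}_k$ is known to be smoothing even though $D$ itself need not be $S^1$-invariant --- is a genuine and correctly handled detail of that same argument.
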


We now study $S^{(q)}_{k,m}$ near $\mu^{-1}(0)$. We can repeat the proof of Theorem~\ref{t-gue161202} with minor change and get 

\begin{theorem}\label{t-gue161202z}
	Let $p\in\mu^{-1}(0)$. There exist local coordinates $v=(v_1,\ldots,v_d)$ of $G$ defined in  a neighborhood $V$ of $e_0$ with $v(e_0)=(0,\ldots,0)$, local coordinates $x=(x_1,\ldots,x_{2n+1})$ of $X$ defined in a neighborhood $U=U_1\times(\hat U_2\times]-2\delta,2\delta[)$ of $p$ with $0\leftrightarrow p$, where $U_1\subset\Real^d$ is an open set of $0\in\Real^d$,  $\hat U_2\subset\Real^{2n-d}$ is an open set of $0\in\Real^{2n-d} $, $\delta>0$, and a smooth function $\gamma=(\gamma_1,\ldots,\gamma_d)\in C^\infty(\hat U_2\times]-2\delta,2\delta[,U_1)$ with $\gamma(0)=0\in\Real^d$  such that
	\begin{equation}\label{e-gue161202z}
	\begin{split}
	&(v_1,\ldots,v_d)\circ (\gamma(x_{d+1},\ldots,x_{2n+1}),x_{d+1},\ldots,x_{2n+1})\\
	&=(v_1+\gamma_1(x_{d+1},\ldots,x_{2n+1}),\ldots,v_d+\gamma_d(x_{d+1},\ldots,x_{2n+1}),x_{d+1},\ldots,x_{2n+1}),\\
	&\forall (v_1,\ldots,v_d)\in V,\ \ \forall (x_{d+1},\ldots,x_{2n+1})\in\hat U_2\times]-2\delta,2\delta[,
	\end{split}
	\end{equation}
	\begin{equation}\label{e-gue161206z}
	\begin{split}
	&T=-\frac{\pr}{\pr x_{2n+1}},\\
	&\underline{\mathfrak{g}}={\rm span\,}\set{\frac{\pr}{\pr x_1},\ldots,\frac{\pr}{\pr x_d}},\\
	&\mu^{-1}(0)\bigcap U=\set{x_{d+1}=\cdots=x_{2d}=0},\\
	&\mbox{On $\mu^{-1}(0)\bigcap U$, we have $J(\frac{\pr}{\pr x_j})=\frac{\pr}{\pr x_{d+j}}+a_j(x)\frac{\pr}{\pr x_{2n+1}}$, $j=1,2,\ldots,d$}, 
	\end{split}
	\end{equation}
	where $a_j(x)$ is a smooth function on $\mu^{-1}(0)\bigcap U$, independent of $x_1,\ldots,x_{2d}$, $x_{2n+1}$ and $a_j(0)=0$, $j=1,\ldots,d$, 
	\begin{equation}\label{e-gue161202Iz}
	\begin{split}
	&T^{1,0}_pX={\rm span\,}\set{Z_1,\ldots,Z_n},\\
	&Z_j=\frac{1}{2}(\frac{\pr}{\pr x_j}-i\frac{\pr}{\pr x_{d+j}})(p),\ \ j=1,\ldots,d,\\
	&Z_j=\frac{1}{2}(\frac{\pr}{\pr x_{2j-1}}-i\frac{\pr}{\pr x_{2j}})(p),\ \ j=d+1,\ldots,n,\\
	&\langle\,Z_j\,|\,Z_k\,\rangle=\delta_{j,k},\ \ j,k=1,2,\ldots,n,\\
	&\mathcal{L}_p(Z_j, \ol Z_k)=\mu_j\delta_{j,k},\ \ j,k=1,2,\ldots,n
	\end{split}
	\end{equation}
	and 
	\begin{equation}\label{e-gue161219z}
	\begin{split}
	\omega_0(x)&=(1+O(\abs{x}))dx_{2n+1}+\sum^d_{j=1}4\mu_jx_{d+j}dx_j\\
	&\quad+\sum^n_{j=d+1}2\mu_jx_{2j}dx_{2j-1}-\sum^n_{j=d+1}2\mu_jx_{2j-1}dx_{2j}+O(\abs{x}^2).
	\end{split}
	\end{equation}
\end{theorem}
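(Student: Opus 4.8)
The plan is to follow the proof of Theorem~\ref{t-gue161202} essentially verbatim, inserting one additional straightening step that uses the transversal CR $S^1$ action to normalize the Reeb field $T$; the output form \eqref{e-gue161219z} differs from the $G$-only expansion \eqref{e-gue161219} precisely because this normalization removes the cross terms $\sum_j b_j x_{2n+1}\,dx_j$.

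First I would straighten the $S^1$ flow. By transversality the field $T$ is nonvanishing and $\Complex T(x)\oplus T^{1,0}_xX\oplus T^{0,1}_xX=\Complex T_xX$ at every point; by Assumption~\ref{a-gue170128}, $T$ is transverse to $\underline{\mathfrak{g}}$ along $\mu^{-1}(0)$ and the $G$ and $S^1$ actions commute, so $[T,\xi_X]=0$ for all $\xi_X\in\underline{\mathfrak{g}}$. Hence near $p$ the orbit directions $\underline{\mathfrak{g}}+\Real T$ span a $(d+1)$-dimensional involutive distribution. Choosing a slice through $p$ transverse to the combined $G\times S^1$ orbit and flowing out by the two commuting actions, I would introduce the last coordinate $x_{2n+1}$ along the $S^1$ orbit so that $T=-\frac{\pr}{\pr x_{2n+1}}$, while the first $d$ coordinates run along the $G$ orbit. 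This yields $\underline{\mathfrak{g}}={\rm span\,}\set{\frac{\pr}{\pr x_1},\ldots,\frac{\pr}{\pr x_d}}$ and the twisted-translation description \eqref{e-gue161202z} of the $G$ action, exactly as in Theorem~\ref{t-gue161202}.

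Next I would fix the transverse coordinates and the CR frame at $p$ as in the earlier theorem: diagonalize the Levi form at $p$ by an orthonormal frame $\set{Z_j}_{j=1}^n$ of $T^{1,0}_pX$, aligning the real and imaginary parts of $Z_1,\ldots,Z_d$ with the orbit directions so that $J(\frac{\pr}{\pr x_j})=\frac{\pr}{\pr x_{d+j}}+a_j(x)\frac{\pr}{\pr x_{2n+1}}$ on $\mu^{-1}(0)\bigcap U$, giving \eqref{e-gue161202Iz} and the $J$-formula in \eqref{e-gue161206z}. The identification $\mu^{-1}(0)\bigcap U=\set{x_{d+1}=\cdots=x_{2d}=0}$ follows from the defining relation $\langle\mu(x),\xi\rangle=\omega_0(\xi_X(x))$ together with the regular-value Assumption~\ref{a-gue170123II}, identifying the common vanishing locus of $\omega_0(\frac{\pr}{\pr x_1}),\ldots,\omega_0(\frac{\pr}{\pr x_d})$. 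The expansion \eqref{e-gue161219z} of $\omega_0$ is where the $S^1$ action simplifies matters: from $\omega_0(T)=-1$ and $d\omega_0(T,\cdot)\equiv0$ in \eqref{e-gue170111ry}, Cartan's formula gives $d(\iota_T\omega_0)+\iota_Td\omega_0=0$, so the Lie derivative of $\omega_0$ along $T$ vanishes; once $T=-\frac{\pr}{\pr x_{2n+1}}$, this forces the coefficients of $\omega_0$ in the coframe $dx_1,\ldots,dx_{2n+1}$ to be independent of $x_{2n+1}$, killing exactly the terms $\sum_j b_jx_{2n+1}\,dx_j$ of \eqref{e-gue161219}, while the quadratic Levi contributions $\sum_{j=1}^d 4\mu_jx_{d+j}\,dx_j$ and $\sum_{j=d+1}^n 2\mu_j(x_{2j}\,dx_{2j-1}-x_{2j-1}\,dx_{2j})$ are read off from the diagonalized Levi form at $p$ as before.

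The hard part will be the simultaneous straightening in the first step: producing a single coordinate system in which $T=-\frac{\pr}{\pr x_{2n+1}}$, the $G$ action takes the twisted-translation form \eqref{e-gue161202z}, and the frame at $p$ diagonalizes the Levi form, all compatibly. This is possible only because the commutativity \eqref{e-gue170111ryII} makes the $G$- and $S^1$-flows commute and the transversality \eqref{e-gue170111ryI} keeps their orbit directions independent along $\mu^{-1}(0)$; granting these, everything else is a routine adaptation of the construction in \cite{HH}.
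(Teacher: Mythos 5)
Your proposal is correct and takes essentially the same approach as the paper: the paper's entire proof consists of the remark that one repeats the proof of Theorem~\ref{t-gue161202} (i.e.\ \cite[Theorem 3.6]{HH}) with minor change, and your two insertions---straightening $T$ by flowing out along the commuting $G$ and $S^1$ actions, and using the vanishing of the Lie derivative of $\omega_0$ along $T$ (from \eqref{e-gue170111ry}) to kill the $b_jx_{2n+1}\,dx_j$ terms of \eqref{e-gue161219}---are precisely that ``minor change''. Nothing further is needed.
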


From Theorem~\ref{t-gue170112}, we get 

\begin{theorem}\label{t-gue170112I}
	Assume that $q\notin\set{n_-,n_+}$. Then, $S^{(q)}_{k,m}=O(m^{-\infty})$ on $X$. 
\end{theorem}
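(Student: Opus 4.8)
The plan is to reduce the statement directly to Theorem~\ref{t-gue170112} through the Fourier representation \eqref{e-gue170111cr}, exactly in the spirit of the argument already used for Theorem~\ref{t-gue170111w}. First I would invoke Theorem~\ref{t-gue170112}: since $q\notin\set{n_-,n_+}$, we have $S^{(q)}_k\equiv0$ on $X$, which means $S^{(q)}_k$ is a smoothing operator and its kernel $S^{(q)}_k(x,y)$ lies in $C^\infty(X\times X,T^{*0,q}X\boxtimes(T^{*0,q}X)^*)$. The crucial feature, in contrast with Theorem~\ref{t-gue170111w}, is that this smoothness holds globally on $X\times X$, so the resulting estimate will hold on all of $X$.

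Next I would observe that, because the $S^1$ action is smooth, the map $(x,y,\theta)\mapsto S^{(q)}_k(x,e^{i\theta}\circ y)$ is smooth on $X\times X\times S^1$. Working in local coordinates and using that the integrand in \eqref{e-gue170111cr} is smooth, I would differentiate under the integral sign in $x$ and $y$ and then integrate by parts repeatedly in $\theta$. The key identity is that, for any smooth $2\pi$-periodic function $F(\theta)$,
\[
\int_{-\pi}^{\pi}F(\theta)e^{im\theta}\,d\theta=\frac{(-1)^N}{(im)^N}\int_{-\pi}^{\pi}F^{(N)}(\theta)e^{im\theta}\,d\theta,\quad\forall N\in\mathbb N,
\]
where the boundary terms vanish at each step by the $2\pi$-periodicity of $\theta\mapsto S^{(q)}_k(x,e^{i\theta}\circ y)$. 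Taking $F(\theta)=\pr^\alpha_x\pr^\beta_y S^{(q)}_k(x,e^{i\theta}\circ y)$ and bounding $F^{(N)}$ uniformly on any compact $K\Subset X\times X$ yields, for each $\alpha,\beta$ and each $N\in\mathbb N$, an estimate of the form $\abs{\pr^\alpha_x\pr^\beta_y S^{(q)}_{k,m}(x,y)}\leq C_{K,\alpha,\beta,N}\,m^{-N}$ on $K$ for $m\gg1$. By the definition \eqref{e-gue13628III}, this is precisely the assertion $S^{(q)}_{k,m}=O(m^{-\infty})$ on $X$.

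There is essentially no hard part here: the analytic content is entirely carried by Theorem~\ref{t-gue170112}, and the passage from $S^{(q)}_k$ to $S^{(q)}_{k,m}$ is a routine nonstationary-phase (integration-by-parts) argument in the single angular variable $\theta$. The only points requiring care are the justification of differentiation under the integral sign, which is immediate from the smoothness of the integrand and the compactness of $S^1$, and the vanishing of the boundary terms at each integration by parts, which follows from the $2\pi$-periodicity of the $S^1$ action. These are the same mechanisms already invoked in the proof of Theorem~\ref{t-gue170111w}, so no new difficulty arises.
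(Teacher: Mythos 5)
Your proposal is correct and is essentially the paper's own argument: the paper derives Theorem~\ref{t-gue170112I} directly from Theorem~\ref{t-gue170112} via the Fourier formula \eqref{e-gue170111cr}, with the same repeated integration by parts in $\theta$ that it uses for Theorem~\ref{t-gue170111w}, the only difference being that here the smoothness of $S^{(q)}_k$ holds globally on $X\times X$ so the $O(m^{-\infty})$ estimate holds on all of $X$. Your write-up just makes explicit the periodicity and differentiation-under-the-integral details that the paper leaves implicit.
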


\begin{proof}[Proof of Theorem \ref{t-gue170128I}]
It suffices to show the cases when $q=n_-$ and $q=n_+\neq n_-$.
Assume that $q=n_-$. It is well-known ~\cite[Theorem 1.12]{HM14} that when $X$ admits a transversal $S^1$ action, then  $\Box^{(q)}_b$ has $L^2$ closed range. 
Fix $p\in\mu^{-1}(0)$. Let $N_p=\{g\in G: g\circ p=p\}=\{g_1=e_0,g_2...,g_r\}$.
Let $v=(v_1,\ldots,v_d)$ and $x=(x_1,\ldots,x_{2n+1})$ be the local coordinates of $G$ and $X$ as in Theorem~\ref{t-gue161202z} and let $U$ and $V$ be open sets as in Theorem~\ref{t-gue161202z}. We take $U$ small enough so that there is a constant $c>0$ such that 
\begin{equation}\label{e-gue170117tI}
d(e^{i\theta}\circ g\circ x,y)\geq c,\ \ \forall (x,y)\in U\times U,\ \ \forall g\in G, \theta\in[-\pi,-\delta]\bigcup[\delta,\pi],
\end{equation}
where $\delta>0$ is as in Theorem~\ref{t-gue161202z}. 
We repeat the same procedure in \cite[Section 4]{HH} as follows.
\begin{equation}\label{e-gue170117}
\begin{split}
S^{(q)}_{k,m}(x,y)&=\frac{1}{2\pi}\int^{\pi}_{-\pi}S^{(q)}_k(x,e^{i\theta}\circ y)e^{im\theta}d\theta=\frac{1}{2\pi}\int^{\pi}_{-\pi}e^{-imx_{2n+1}+imy_{2n+1}}S^{(q)}_k(\mathring{x},e^{i\theta}\circ\mathring{y})e^{im\theta}d\theta\\
&=I+II,\\
&I=\frac{1}{2\pi}\int^{\pi}_{-\pi}e^{-imx_{2n+1}+imy_{2n+1}}\chi(\theta)S^{(q)}_k(\mathring{x},e^{i\theta}\circ\mathring{y})e^{im\theta}d\theta,\\
&II=\frac{1}{2\pi}\int^{\pi}_{-\pi}e^{-imx_{2n+1}+imy_{2n+1}}(1-\chi(\theta))S^{(q)}_k(\mathring{x},e^{i\theta}\circ\mathring{y})e^{im\theta}d\theta,
\end{split}
\end{equation}
where $\mathring{x}=(x_1,\ldots,x_{2n},0)\in U$, $\mathring{y}=(y_1,\ldots,y_{2n},0)\in U$, $\chi\in C^\infty_0(]-2\delta,2\delta[)$, $\chi=1$ on $[-\delta, \delta]$. 
It is easy to check that
\begin{equation}\label{e-gue170117pI}
II=O(m^{-\infty}). 
\end{equation}
For $I$, we have 
\begin{equation}\label{e-gue170117pII}
\begin{split}
&I=I_0+I_1,\\
&I_0=\frac{1}{2\pi}\sum_{\alpha=1}^{r}\int^\infty_0\int^{\pi}_{-\pi}
e^{-imx_{2n+1}+imy_{2n+1}}\chi(\theta)
e^{i(-\theta+\hat\Phi_{k,-}(g_\alpha\circ\mathring{x},\mathring{y}))t+im\theta}
a_{k,\alpha,-}(\mathring{x},
(\mathring{y},-\theta),t)dtd\theta,\\
&I_1=\frac{1}{2\pi}\sum_{\alpha=1}^{r}\int^\infty_0\int^{\pi}_{-\pi}
e^{-imx_{2n+1}+imy_{2n+1}}\chi(\theta)
e^{i(\theta+\hat\Phi_{k,+}(g_\alpha\circ\mathring{x},\mathring{y}))t+im\theta}
a_{k,\alpha,+}(\mathring{x},(\mathring{y},-\theta),t)dtd\theta.
\end{split}
\end{equation}
From $\frac{\pr}{\pr\theta}\Bigr(i(\theta+\hat\Phi_{k,+}(g_\alpha\circ\mathring{x},\mathring{y}))t+im\theta\Bigr)\neq0$, we can integrate by parts with respect to $\theta$ several times and deduce that 
\begin{equation}\label{e-gue170117pIII}
I_1=O(m^{-\infty}).
\end{equation}
For $I_0$, we have 
\begin{equation}\label{e-gue170117pIV}
I_0=\frac{1}{2\pi}\sum_{\alpha=1}^{r}\int^\infty_0\int^{\pi}_{-\pi}
e^{-imx_{2n+1}+imy_{2n+1}}\chi(\theta)
e^{im(-\theta t+\hat\Phi_{k,-}(g_\alpha\circ\mathring{x},\mathring{y})t+\theta)}
ma_{k,\alpha,-}(\mathring{x},
(\mathring{y},-\theta),mt)dtd\theta.
\end{equation}
We apply the complex stationary phase formula of Melin-Sj\"ostrand~\cite[Theorem 2.3]{MS74} to carry the $dtd\theta$ integration in \eqref{e-gue170117pIV}. The calculation is similar as in the proof of Theorem 3.17 in~\cite{HM14a}. Then 
\begin{equation}\label{e-gue170117pV}
\begin{split}
&I_0\equiv\sum_{\alpha=1}^{r}e^{im\Psi_k(g_\alpha\circ x,y)}b_{k,\alpha}(x,y,m),\\
&\Psi_k(x,y)=\hat\Phi_{k,-}(\mathring{x},\mathring{y})-x_{2n+1}+y_{2n+1},\\
&b_{k,\alpha}(x,y,m)\in S^{n-\frac{d}{2}}_{{\rm loc\,}}(1; U\times U, T^{*0,q}X\boxtimes(T^{*0,q}X)^*),\\
&\mbox{$b_{k,\alpha}(x,y,m)\sim\sum^\infty_{j=0}m^{n-\frac{d}{2}-j}
	b_{k,\alpha}^j(x,y)$ in $S^{n-\frac{d}{2}}_{{\rm loc\,}}(1; U\times U, T^{*0,q}X\boxtimes(T^{*0,q}X)^*)$},\\
&b_{k,\alpha}^j(x,y)\in C^\infty(U\times U, T^{*0,q}X\boxtimes(T^{*0,q}X)^*),\ \ j=0,1,2,\ldots,
\end{split}
\end{equation}
%\begin{equation}\label{e-gue170117pVI}
%b_0(p,p)=a^0_-(p,p)=2^{\frac{d}{2}-1}\frac{1}{V_{{\rm eff\,}}(p)}\pi^{-n-1+\frac{d}{2}}\abs{\mu_1}^{\frac{1}{2}}\cdots\abs{\mu_d}^{\frac{1}{2}}\abs{\mu_{d+1}}\cdots\abs{\mu_n}\tau_{p,n_-}.
%\end{equation}

Assume that $q=n_+\neq n_-$. If $m\To-\infty$, then the expansion for $S^{(q)}_{k,m}(x,y)$ as  $m\To-\infty$ is similar to $q=n_-$ case. When $m\To+\infty$, we can repeat the method above with minor change and deduce that $S^{(q)}_{k,m}(x,y)=O(m^{-\infty})$ on $X$. In particular, it follows from the argument in \cite[Section 4]{HH} with minor modification, if $G \times S^1$ acts freely near $\mu^{-1}(0)$, then
	%\begin{equation}\label{e-gue170117pVIIIam}
	\[
	b_{k}^0(x,x)=2^{d-1}\frac{d^2_k}{V_{{\rm eff\,}}(x)}\pi^{-n-1+\frac{d}{2}}\abs{\det R_x}^{-\frac{1}{2}}\abs{\det\mathcal{L}_{x}}\tau_{x,n_-},\ \ \forall x\in\mu^{-1}(0),
%	\end{equation}
\]
where $\tau_{x,n_-}$ is given by \eqref{tau140530}. The proof is completed. 
\end{proof}

\bibliographystyle{plain}

\end{document}